\newcommand{\dd}{\, {\rm d}}
\newcommand{\abs}[1]{\left\vert#1\right\vert}
\newcommand{\norm}[1]{\left\|#1\right\|}  
\newcommand{\set}[1]{\left\{ #1 \right\}}
\newcommand{\brak}[1]{\left\langle #1 \right\rangle}
\newcommand{\R}{\ensuremath{{\mathbb R}}}
\newcommand{\N}{\ensuremath{{\mathbb N}}}
\DeclareMathOperator{\eps}{\varepsilon}
\DeclareMathOperator{\embeds}{\hookrightarrow}
\DeclareMathOperator{\tens}{\otimes}
\newcommand{\weakstar}{\ensuremath{{\overset{\ast}{\rightharpoonup}}}}
\newcommand{\beq}{\begin{equation}}
\newcommand{\eeq}{\end{equation}}
\newcommand{\beqs}{\begin{equation*}}
\newcommand{\eeqs}{\end{equation*}}
\newcommand{\bal}{\begin{equation}\begin{aligned}}
\newcommand{\eal}{\end{aligned}\end{equation}}
\newcommand{\bals}{\begin{equation*}\begin{aligned}}
\newcommand{\eals}{\end{aligned}\end{equation*}}
\newcounter{num} \numberwithin{num}{section}
\newtheorem{theorem}[num]{Theorem}
\newtheorem{proposition}[num]{Proposition}
\newtheorem{lemma}[num]{Lemma}
\newtheorem{corollary}[num]{Corollary}
\theoremstyle{definition}
\theoremstyle{remark}
\newtheorem{remark}[num]{Remark}
\numberwithin{equation}{section}
\title[The Landau-Coulomb equation in $L^{3/2}$]{Global smooth solutions to the Landau-Coulomb equation in $L^{3/2}$}
\author{William Golding, Maria Gualdani, Amélie Loher}
\date{Nov 13, 2024}
\address[William Golding]{\newline Department of Mathematics, \newline The University of Texas at Austin, Austin, TX 78712, USA}
\email{wgolding@utexas.edu}
\address[Maria Gualdani]{\newline Department of Mathematics, \newline The University of Texas at Austin, Austin, TX 78712, USA}
\email{gualdani@math.utexas.edu}
\address[Amélie Loher]{\newline Department of Pure Mathematics and Mathematical Statistics, \newline University of Cambridge, Cambridge, UK}
\email{ajl221@cam.ac.uk}
\keywords{Landau-Coulomb equation, kinetic theory, strong solutions, rough data, well-posedness, critical exponent}
\thanks{\textbf{Funding:} W.G. is partially supported by NSF grant DMS 1840314. A.L. is funded by the Cambridge Trust. M.P.G. is partially supported by NSF Grant DMS-2206677. }
\thanks{\textbf{Acknowledgments:} The authors would like to thank Dallas Albritton and Nestor Guillen for several enlightening discussions. The authors would like to thank the Isaac Newton Institute in Cambridge UK for their kind hospitality during the thematic program in Spring 2022.
W.G. is partially supported by NSF grant DMS 1840314. A.L. is funded by the Cambridge Trust. M.P.G. is partially supported by NSF Grant DMS-2206677. }
\begin{document}

\begin{abstract}

We consider the homogeneous Landau equation in $\mathbb{R}^3$ with Coulomb potential and initial data in polynomially weighted $L^{3/2}$. We show that there exists a smooth solution that is bounded for all positive times. The proof is based on short-time regularization estimates for the Fisher information, which, combined with the recent result of Guillen and Silvestre, yields the existence of a global-in-time smooth solution. Additionally, if the initial data belongs to $L^p$ with $p>3/2$, there is a unique solution. 

   At the crux of the result is a new $\varepsilon$-regularity criterion in the spirit of the Caffarelli-Kohn-Nirenberg theorem: a solution which is small in weighted $L^{3/2}$ is regular.  Although the $L^{3/2}$ norm is a critical quantity for the Landau-Coulomb equation, using this norm to measure the regularity of solutions presents significant complications. For instance, the $L^{3/2}$ norm alone is not enough to control the $L^\infty$ norm of the competing reaction and diffusion coefficients. These analytical challenges caused prior methods relying on the parabolic structure of the Landau-Coulomb  to break down. 
   
   Our new framework is general enough to handle slowly decaying and singular initial data, and provides the first proof of global well-posedness for the Landau-Coulomb equation with rough initial data.

\end{abstract}

\maketitle

\tableofcontents

We consider the homogeneous Landau equation on $\R^3$ with Coulomb potential,
\begin{equation}\label{eq:landau}
    \partial_t f = \nabla \cdot \left(A[f]\nabla f - \nabla a[f]f \right).
\end{equation}
Here the non-local coefficients $A[f]$ and $a[f]$ are given by convolutions:
\begin{equation*}
    A[f] = \frac{\Pi(v)}{8\pi|v|} \ast f, \qquad a[f] = (-\Delta)^{-1}f = \frac{1}{4\pi|v|} \ast f, \qquad \Pi(v) = Id - \frac{v\tens v}{\abs{v}^2}.
\end{equation*}
The mathematical investigation of equation (\ref{eq:landau}) has witnessed remarkable activity over the past three decades, leading to significant advancements and culminating in the recent breakthrough result by Guillen and Silvestre in \cite{GuillenSilvestre}. In \cite{GuillenSilvestre}, they consider $C^1$ initial data with Maxwellian upper bounds and show the existence and uniqueness of a global-in-time classical solution to \eqref{eq:landau}. Their proof relies on a novel approach to the equation that enables them to prove that the Fisher information, defined via
\begin{equation*}
    i(f) := \int_{\R^3} \frac{\abs{\nabla f}^2}{f} \dd v = 4\norm{\sqrt{f}}_{\dot{H}^1}^2,
\end{equation*}
is monotone decreasing. 
Then, by a standard Sobolev embedding, the Fisher information controls the $L^3$ norm uniformly in time. Consequently, the coefficients $A[f]$ and $\nabla a[f]$ are bounded\footnote{Technically, $\nabla a[f]$ belongs to $L^\infty(0,T;L^p)$ with $3/2 < p < +\infty$ if $f\in L^\infty(0,T;L^3)$.} for all times $t\ge 0$. Combined with the known lower bound on $A[f]$, \eqref{eq:landau} becomes a uniformly parabolic equation. Global regularity then follows via several approaches. One approach, taken in \cite{GuillenSilvestre}, is to use the maximum principle to propagate bounds on the initial data. Another approach, taken here, is to use parabolic regularization estimates to show smoothing effects for less regular initial data (see Theorem \ref{thm:long-cor-GS-GL}). The important point is that the a priori estimate on the Fisher information rigorously justifies that \eqref{eq:landau} is a parabolic equation with bounded (uniformly in time) measurable coefficients. This satisfactorily addresses the well-posedness theory for \eqref{eq:landau} for smooth and rapidly decaying initial data.

By contrast, the global well-posedness  for rough, slowly decaying initial data, meaning initial data with bounded mass, temperature, and Boltzmann entropy, remains unknown. Notably, regularity and uniqueness for $H$-solutions, first constructed by Villani in \cite{Villani_Hsols} and later revisited by Desvillettes in \cite{Desvillettes_HSolutions}, are still unknown.  One approach in this program is to prove existence of strong solutions under minimal assumptions on the initial data and show uniqueness of these strong solutions among a larger class of weak solutions. 

In this manuscript, we establish the global in time existence of smooth solutions to \eqref{eq:landau} for slowly decaying initial data in  $L^{3/2}$. We show that such solutions satisfy the quantitative short time regularization estimates:
\begin{equation*}
    \sup_{0 < t < 1} \norm{f(t)}_{L^\infty} \le \frac{C}{t} \qquad \text{and} \qquad \sup_{0 < t < 1} i(f(t)) \le \frac{C}{t}.
\end{equation*}
Since the Fisher information becomes instantaneously finite, we adapt  the main result of \cite{GuillenSilvestre} to our setting and continue the local-in-time solutions to global-in-time solutions.  We also prove existence of smooth solutions with initial data in $L^p$ for any $p > 3/2$. In this case, we additionally show uniqueness.

Previous existence results for smooth solutions considered $L^\infty$ data \cite{ArsenevPeskov, KimGuoHwang, HendersonSnelsonTarfulea} , $L^2$ data \cite{CarrapatosoMischler}, $\dot{H}^1$ data \cite{DesvillettesHeJiang}, and recently $L^p$ data with $p > 3/2$ \cite{GoldingLoher, GoldingGualdaniLoher}.\footnote{Technically, each of these results with the exception of Arsenev-Peskov \cite{ArsenevPeskov} requires a weighted a version of the space or additional $L^1$ moments.} Each of these works considers either global-in-time solutions for initial data close to equilibrium or local-in-time solutions for general initial data. Furthermore, each of these results requires the initial data to belong in a space which embeds into $L^p$ for some $p>3/2$.
The value $p=3/2$ is a threshold for the boundedness of the coefficients ($A[f]$ can be unbounded for $f \in L^{3/2}$ ) and also a threshold for time integrability of $\|f(t)\|_{L^\infty}$.  Consequently, $p = 3/2$ is the threshold at which most known analytical techniques break down and no longer apply.

\vspace{0.25cm}

{\bf{Literature:}} Following its introduction by Landau in \cite{Landau}, the Landau equation rapidly became one of the most important mathematical equations for modeling collisional plasmas. The Landau equation first appeared in the literature as a variant of the Boltzmann equation for particles interacting by a Coulomb potential. Indeed, using methods developed for the Boltzmann equation in \cite{Lions, DipernaLions, DipernaLions2}, one can construct renormalized solutions to the Landau equation and show rigorously that the Boltzmann equation converges to the Landau equation in a grazing collision limit \cite{Alexandre_Villani, DesvillettesVillani, Villani}. Separately, the Landau collision operator was also derived as a nonlinear version of a Fokker-Planck operator to describe particles that interact and exchange only small amount of momentum during collisions \cite{LandauLifshitz}.

The mathematical understanding of the Landau equation has come quite far in the past few decades, with fundamental contributions in many different directions. Given the extensive body of work on the Landau equation, our overview focuses primarily on works concerning the homogeneous Landau equation with Coulomb potential. For a discussion of other potentials and the inhomogeneous equation, we direct readers to the review paper of Silvestre \cite{Silvestre_Review}. Relevant to the present work, particularly from an analytical standpoint, are noteworthy contributions that can be summarized, albeit incompletely, as follows: (i) The existence and uniqueness of smooth solutions for short times \cite{GoldingLoher}. (ii) Global existence and uniqueness of smooth solutions with initial data near Maxwellian. Most relevant here are the ones by  Carrapatoso and Mischler \cite{CarrapatosoMischler}, Desvillettes, He and Jiang \cite{DesvillettesHeJiang}, Golding, Gualdani, and Loher \cite{ GoldingGualdaniLoher}, along with Guo \cite{Guo} and related references.  (iii) Conditional uniqueness and regularity, focusing on conditions ensuring the global well-posedness of solutions for arbitrarily large times. This line of research includes investigations by Silvestre \cite{Silvestre}, Gualdani and Guillen \cite{GualdaniGuillen1, GualdaniGuillen2}, Alonso, Bagland, Desvillettes, Lods \cite{AlonsoBaglandDesvillettesLods_ProdiSerrin}, Fournier \cite{Fournier}, and  Chern and Gualdani \cite{ChernGualdani}.  (iv) Partial regularity \cite{GolseGualdaniImbertVasseur_PartialRegularity1, GolseImbertVasseur_PartialRegularity2}. (v) Study of modified models that pertain the same difficulties of the Landau equation but seem analytically more tractable, explored in works by Gressman, Krieger and Strain  \cite{KriegerStrain, GressmanKriegerStrain} and Gualdani and Guillen \cite{GualdaniGuillen1}. (vi) Efforts directed at ruling out blow-up profiles initiated with the work of Bedrossian, Gualdani, and Snelson \cite{BedrossianGualdaniSnelson} and culminated in the recent work of Guillen and Silvestre \cite{GuillenSilvestre}. Noteworthy is the very recent proof of blow-up for a modified Landau equation by Chen in \cite{Chen}.  
  
\vspace{0.25cm}

{\bf{Main results:}} Throughout the manuscript we assume that  
\begin{equation}\label{eq:normalisation}
    f_{in} \ge 0, \qquad \int_{\R^3} f_{in} = 1, \qquad \int_{\R^3} v f_{in} = 0, \qquad \text{and} \qquad \int_{\R^3} |v|^2 f_{in} = 3.
\end{equation}
We also denote by $L^p_{m}$ the space of measurable functions such that
\begin{equation*}
\norm{f}_{L^p_{m}}:= \left(\int_{\R^3} f^p \brak{v}^m \;dv\right)^{1/p} < +\infty. 
\end{equation*}
Our main results are summarized in the following theorems: 
\begin{theorem}\label{thm:short} (Local-in-Time well-posedness) 
Let $\frac{3}{2}\le  p <  +\infty $. If the initial data $f_{in}$ is such that $f_{in}\langle v \rangle^{\frac{9}{2p}} \in L^p(\mathbb{R}^3)$, then there exists a smooth solution $f:[0,T]\times \R^3 \to \R^+$ to (\ref{eq:landau}) such that 
\begin{equation}\label{eq:Linfty}
\norm{f}_{C([0,T];L^p_{9/2})} + \sup_{0 < t \le T} t^{\frac{3}{2p}}\|f(t)\|_{L^\infty} \lesssim C(f_{in}). 
\end{equation}
Moreover, if $p > 3/2$, this solution is unique among all smooth solutions\footnote{More precisely, following \cite{GoldingLoher}, we assume $g$ is a locally bounded $H$-solution: we assume $g$ is a distributional solution; we assume the entropy dissipation of $g$ belongs to $L^1(0,T)$; and we assume $g$ satisfies the additional regularity assumption $g\in L^\infty(s,T;L^\infty(\Omega))$ for each $s > 0$ and $\Omega \subset \R^3$ compact. It is well-known that such solutions are necessarily qualitatively smooth. This technical regularity assumption is solely to ensure that truncations of $g$ are admissible as test functions for \eqref{eq:landau}. We expect this assumption is unnecessary and can be removed.} $g:[0,T]\times \R^3 \to \R^+$ to \eqref{eq:landau}
which obtain the initial data in the following sense:
\begin{enumerate}[(1)]
    \item $\begin{aligned}
        \lim_{t \to 0^+} g(t) = f_{in} \quad \text{in the sense of distributions}
    \end{aligned}$
    \item $\begin{aligned}
        g\in L^r(0,T;L^q_{9/2}) \quad \text{for some pair }(r,q),\quad 3/2 < q \le \infty,\quad \frac{2q}{2q-3} < r \le \infty.
    \end{aligned}$
\end{enumerate}
\end{theorem}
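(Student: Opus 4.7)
The plan is to construct the solution as a limit of smooth approximations and to close the required quantitative bounds by exploiting the $\epsilon$-regularity criterion advertised in the abstract. First, I approximate $f_{in}$ by a sequence $f_{in}^{(k)}$ of smooth, nonnegative, rapidly decreasing functions with the same normalization \eqref{eq:normalisation} that converge to $f_{in}$ in $L^p_{9/2}$. By the Guillen-Silvestre theorem, each $f_{in}^{(k)}$ launches a global smooth solution $f^{(k)}$, so the entire problem reduces to producing two uniform a priori estimates on a short time interval $[0,T]$ whose length depends only on the norm of $f_{in}$:
\begin{equation*}
\norm{f^{(k)}(t)}_{L^p_{9/2}} \lesssim C(f_{in}), \qquad \norm{f^{(k)}(t)}_{L^\infty} \lesssim t^{-3/(2p)} C(f_{in}).
\end{equation*}

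For the first bound I would test \eqref{eq:landau} against $(f^{(k)})^{p-1}\langle v\rangle^{9/2}$, integrate by parts, and use the standard pointwise lower bound $A[f^{(k)}]\gtrsim \langle v\rangle^{-3}I$ to produce a positive dissipation term. The weight exponent $9/2$ should be the smallest one for which the error contributions from the drift $\nabla a[f^{(k)}]$ and from differentiating the weight can be reabsorbed into dissipation and conserved moments. For the smoothing bound, the $\epsilon$-regularity criterion does the heavy lifting: local smallness of the weighted $L^{3/2}$ mass implies $L^\infty$ control there. For $p > 3/2$ local smallness follows from Hölder interpolation against the $L^p$ bound; for the critical case $p = 3/2$ it comes from absolute continuity of the integral applied to $f_{in}\in L^{3/2}$. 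A dyadic iteration then upgrades local smallness to the scaling-sharp smoothing rate $t^{-3/(2p)}$.

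Once the estimates are uniform in $k$, a parabolic Aubin-Lions argument extracts a subsequence converging to a limit $f$ solving \eqref{eq:landau} with the claimed bounds. Since $f$ is locally bounded for $t > 0$, the coefficients $A[f]$ and $\nabla a[f]$ are locally bounded, the equation is uniformly parabolic on $[t_0,T]\times B_R$, and a Schauder bootstrap upgrades $f$ to a classical smooth solution. Uniqueness for $p > 3/2$ follows from a Ladyzhenskaya-Prodi-Serrin-type energy estimate on $w = f - g$: the relation $\frac{2q}{2q-3} < r$ in assumption (2) is precisely the scaling-critical threshold at which the nonlinear error terms involving $(A[f] - A[g])\nabla g$ and $(\nabla a[f] - \nabla a[g])g$ can be controlled by Hölder and absorbed into the dissipation furnished by $A[f]$. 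The mild qualitative regularity of $g$ assumed in the footnote is used solely to make a truncation of $w$ a legitimate test function for \eqref{eq:landau}.

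The main obstacle is the critical case $p = 3/2$. Since $L^{3/2}$ is scale-invariant for \eqref{eq:landau}, this norm alone cannot bound either $A[f]$ or $\nabla a[f]$ in $L^\infty$, so the equation cannot be treated near $t = 0$ as a uniformly parabolic equation with bounded coefficients. Every previous strategy for constructing smooth solutions depended precisely on that boundedness, which is why the best prior result stopped at $p > 3/2$. The decisive ingredient enabling the present theorem is the $\epsilon$-regularity criterion: it extracts regularity purely from local smallness of the critical norm, and is therefore the mechanism that pushes the existence theory down to the scale-invariant threshold.
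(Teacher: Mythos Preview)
Your overall architecture (approximate by Schwartz data, derive uniform a priori bounds, pass to the limit, bootstrap regularity) matches the paper, and for $p>3/2$ your direct energy estimate does work: it yields a Bernoulli-type ODE for $\|f\|_{L^p_{9/2}}^p$ that can be integrated on a short interval (this is exactly Lemma~\ref{lem:subcritical-propagation}). The genuine gap is at the critical exponent $p=3/2$. Testing with $f^{1/2}\langle v\rangle^{9/2}$ produces
\[
\frac{d}{dt}\int\langle v\rangle^{9/2}f^{3/2}\,dv + \int\langle v\rangle^{3/2}|\nabla f^{3/4}|^2\,dv \;\lesssim\; \int\langle v\rangle^{9/2}f^{5/2}\,dv + \|A[f]\|_{L^\infty}\int\langle v\rangle^{9/2}f^{3/2}\,dv,
\]
and interpolation gives $\int\langle v\rangle^{9/2}f^{5/2}\lesssim y^{2/3}F$ where $y=\|f\|_{L^{3/2}_{9/2}}^{3/2}$ and $F$ is the dissipation. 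The dissipation can absorb this only if $y$ itself is small, which it is not; moreover $\|A[f]\|_{L^\infty}$ is genuinely unbounded for $f\in L^{3/2}$. Your remark about the weight $9/2$ handling the drift and weight errors is correct but beside the point: those terms are subcritical, and the obstruction is the reaction term, which is exactly critical.

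The paper's resolution is a two-layer structure your proposal does not contain. One does \emph{not} propagate $\|f\|_{L^{3/2}_{9/2}}$ directly. Instead one sets $y(t)=\|(f-K)_+\|_{L^{3/2}_{9/2}}^{3/2}$ and $z(t)=\|\min(f,2K)\|_{L^{3/2}_{9/2}}^{3/2}$, choosing $K$ so that $y(0)$ is small (this is where absolute continuity enters). The level-set energy inequality for $y$ now has the small coefficient $y^{2/3}$ in front of the critical term, but it is forced by $Kz$; the $z$-equation is in turn forced by nonlinear expressions in $y$ and the dissipation $F$ (Lemmas~\ref{lem:level_set_ODE}--\ref{lem:level_set_ODE2}). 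This coupled system of integral inequalities is closed by a continuity argument (Proposition~\ref{prop:propagation}) to show $y(t)\le 2\delta$ and $\int_0^t F\le 2\delta$ on a quantitative interval $[0,T]$. Only then can the $\epsilon$-regularity criterion (Proposition~\ref{prop:critical_degiorgi}) be invoked: it requires as \emph{input} uniform-in-time smallness of $y$ together with the space-time gradient bound, not merely smallness of $y(0)$. Your proposal treats the $\epsilon$-regularity as extracting the smoothing directly from the initial data, but that skips the propagation step, which is the heart of the critical case. A secondary difference: for uniqueness the paper does not estimate $f-g$ directly; it shows that any competitor $g$ in the stated class satisfies the same $L^p_{9/2}\to L^\infty$ smoothing (via the $\epsilon$-Poincar\'e inequality, Lemma~\ref{lem:eps_poincare}), hence lands in $L^1(0,T;L^\infty)$, and then appeals to Fournier's uniqueness theorem.
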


The $L^{3/2} \to L^\infty$ regularization estimate in \eqref{eq:Linfty} agrees with the regularization rate observed for the heat equation. However, we conjecture that the $L^p \to L^\infty$ regularization rate of the heat equation is subobtimal for the Landau diffusion operator, when $p < 3/2$, as predicted in \cite{CabreraGualdaniGuillen,GualdaniGuillen2}. The enhanced regularization rate is expected because the diffusion operator $\nabla \cdot (A[f]\nabla f)$  behaves more nonlinearly if $p<3/2$. 
Let us comment further on the meaning of the threshold $p = 3/2$. First, we recall that for any function $f$, the diffusion matrix is uniformly bounded below as 
\begin{equation*}
A[f](v,t) \ge c_0 \brak{v}^{-3} I, 
\end{equation*}
where $c_0$ only depends on the mass, energy, and entropy of $f$. One is then led to the toy problem:
\begin{equation}\label{eq:semilinear_heat}
u_t = \Delta u + u^2.    
\end{equation}
and the associated $1$-parameter scaling $u_\lambda = \lambda^2u(\lambda^2 t, \lambda x)$. This scaling preserves \eqref{eq:semilinear_heat} and leaves the $L^\infty(0,T,L^{3/2})$ norm invariant. In this sense, the $L^{3/2}$ norm is critical with respect to the scaling of this toy problem. This analogy is, however, simultaneously both helpful and misleading. This comparison is helpful in identifying the $L^{3/2}$ as an important norm for Landau, but not critical in the usual scaling sense. This is the motivation for the $\varepsilon$-regularity criterion, presented in Section \ref{sec:DeGiorgi}, which shows that if a solution to the Landau equation (\ref{eq:landau}) is small in a (critical) weighted $L^{3/2}$ norm, then the solution is smooth. Because the smallness required by the $\varepsilon$-regularity criterion cannot be directly removed, we are not able to show regularization estimates of the form $L^\infty(0,T,L^{3/2}) \to L^{\infty}(t,T, L^{\infty})$, nor are we able to propagate $L^{3/2}$ directly. Instead, we can replace the smallness by estimates of the form  $C(0,T,L^{3/2}) \to L^\infty(t,T, L^{\infty})$ and quantitatively propagate continuity in $L^{3/2}$ by solving a system of coupled differential inequalities. While the comparison to \eqref{eq:semilinear_heat} provides inspiration for the smoothing estimates, the standard techniques for the semilinear heat equation and for the incompressible three-dimensional Navier-Stokes equation do not apply here, due to the nonlinear and nonlocal nature of \eqref{eq:landau} and the presence of the unbounded coefficients.

For all positive times, the solution constructed in Theorem \ref{thm:short} is smooth and the notion of solution is unambiguous. However, the initial data is rough, and the sense in which the initial data is obtained is a delicate point, especially when considering uniqueness. 
For $p=3/2$, the solution $f$ does not belong to $L^1(0,T,L^\infty)$, and the uniqueness result by Fournier \cite{Fournier} does not apply. Nevertheless, we expect this solution to be unique among smooth solutions in the class $C([0,T];L^{3/2}_{9/2})$, but the proof is still open. For readers interested in investigating uniqueness, we note that the solutions constructed here in the $p = 3/2$ case additionally satisfy:
\begin{equation*}
\begin{aligned}
   &f \in L^{\frac{2q}{2q-3}}(0,T;L^q_{9/2}) \quad \text{if }q\in [3/2, \infty), \\
    &\nabla f^{3/4} \in L^2(0,T;L^2_{3/2}), \quad \text{and} \quad \lim_{t\to 0^+} t\norm{f(t)}_{L^\infty} = 0.
\end{aligned}
\end{equation*}

Note that the unweighted $p=\infty$ result of Arsenev-Peskov \cite{ArsenevPeskov} fits naturally in our framework. Our decay assumptions are less restrictive in terms of weights than the ones in  \cite{AlonsoBaglandDesvillettesLods_ProdiSerrin} used to show conditional regularity. This is a result of a new (weighted) $\eps$-Poincar\'{e} inequality used to rigorously show uniqueness (see Lemma \ref{lem:eps_poincare}). The proof of Lemma \ref{lem:eps_poincare} follows from the Caffarelli-Kohn-Nirenberg inequality and is located in Appendix \ref{sec:eps_poincare}. The integrability condition $f\in L^1_2$ and the normalization (\ref{eq:normalisation}) for $f$ are used solely in the lower bound for $A[f]$. 

\vspace{0.25cm}

To obtain global-in-time solutions from Theorem \ref{thm:short} we use the monotonicity of the Fisher information proved by Guillen and Silvestre in \cite{GuillenSilvestre}. Their framework, as stated, requires $C^1$ initial data with Maxwellian tails. We cannot expect that the solutions constructed in Theorem \ref{thm:short} satisfy these assumptions for generic initial data. Instead, we use the following adaptation of their result:


\begin{theorem}\label{thm:long-cor-GS-GL}
    Suppose $g_{in} \in L^1_m$ for some $m > 6$ and $g_{in} \in W^{1,1}_{loc}$ has finite Fisher information\footnote{There are two reasonable definitions for the Fisher information of a general nonnegative function $g\in L^1(\R^3)$ that will be used frequently, namely $\norm{\sqrt{g}}_{\dot{H}^1}^2$ and $\norm{g^{-1}\abs{\nabla g}^2}_{L^1}$, with the convention that the integrand is $0$ on the set $\set{g = 0}$. We remind the reader that under the mild assumption that $g \in W^{1,1}_{loc}$ these definitions are equivalent up to a factor of four.}. Then, there is a unique global-in-time smooth solution to \eqref{eq:landau} with initial datum $g_{in}$ and decreasing Fisher information.
\end{theorem}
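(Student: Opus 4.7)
The plan is to approximate the rough initial datum $g_{in}$ by a sequence of $C^1$ data with Maxwellian tails, apply the Guillen-Silvestre theorem to each approximation, and pass to the limit using the uniform bounds obtained from the monotonicity of the Fisher information. Note that finite Fisher information combined with $g_{in} \in W^{1,1}_{loc}$ gives $\sqrt{g_{in}} \in \dot H^1$, so the Sobolev embedding yields $g_{in} \in L^3$.

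First, I would construct $g_{in}^n \in C^1(\R^3)$ with Maxwellian upper bounds such that $g_{in}^n \to g_{in}$ in $L^1_m \cap L^3$ and $i(g_{in}^n) \to i(g_{in})$, preserving the normalization \eqref{eq:normalisation} up to harmless rescaling. A natural construction is to mollify $g_{in}$, truncate in $v$, multiply by $e^{-|v|^2/n}$, and renormalize. The convergence of the Fisher information is the delicate point; it reduces to strong $\dot H^1$ convergence of $\sqrt{g_{in}^n}$ to $\sqrt{g_{in}}$, which follows from known continuity properties of $i(\cdot)$ under mollification, truncation, and multiplication by smooth cutoffs approaching $1$. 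For each $n$, Guillen-Silvestre \cite{GuillenSilvestre} yields a unique smooth global solution $g^n$ with $g^n(0) = g_{in}^n$ and $i(g^n(t))$ nonincreasing.

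Monotonicity of $i(g^n(t))$ together with the Sobolev embedding $\dot H^1 \embeds L^6$ shows $\{g^n\}$ is uniformly bounded in $L^\infty(0,\infty;L^3)$. By Hardy-Littlewood-Sobolev, $A[g^n]$ and $\nabla a[g^n]$ are then uniformly bounded on compact subsets of $\R^3$, and the a priori lower bound $A[g^n] \ge c_0 \langle v\rangle^{-3} I$ makes \eqref{eq:landau} uniformly parabolic there. De Giorgi-Nash-Moser followed by Schauder estimates produce uniform $C^k_{loc}$ bounds on $(0,\infty) \times \R^3$. Standard moment propagation on the smooth $g^n$ gives uniform bounds in $L^\infty(0,T;L^1_m)$, ensuring mass, energy, and higher moments are preserved in the limit. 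Arzel\`a-Ascoli then extracts a smooth limit $g$ solving \eqref{eq:landau}.

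Lower semicontinuity of $i(\cdot)$ under pointwise convergence preserves the monotonicity in $t$, and the initial datum is attained in the sense of distributions via weak-$\ast$ compactness in $L^1_m$. For uniqueness, any solution with the stated properties satisfies $g(t) \in L^3$ for all $t \ge 0$ and, by the $L^\infty$-smoothing of Theorem \ref{thm:short} applied at positive times, falls into the uniqueness class therein once moment propagation supplies the appropriate weighted integrability. The principal obstacle is the first step: the Fisher information is only lower semicontinuous under generic convergences, so the approximation must be designed carefully to ensure the exact convergence $i(g_{in}^n) \to i(g_{in})$ required for compatibility with the Guillen-Silvestre framework.
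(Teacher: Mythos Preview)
Your overall strategy---approximate by Guillen--Silvestre-admissible data, use monotone Fisher information for uniform $L^3$ control, bootstrap parabolic regularity, and pass to the limit---is exactly the paper's. Two points need sharpening.

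\textbf{The approximation.} You correctly flag convergence of the Fisher information as the obstacle, but your construction (mollify, truncate, multiply by a Gaussian) is not obviously adequate: a compactly supported cutoff $\chi$ introduces the term $\int f\,|\nabla\chi|^2/\chi$, which blows up where $\chi$ vanishes unless $|\nabla\chi|\lesssim\sqrt{\chi}$. The paper handles this in two stages. For smooth $g_{in}$ lacking Gaussian tails, it forms the convex combination $g_{in,R}=\eta_R\,g_{in}+(1-\eta_R)e^{-|v|^2}$ with a special cutoff (Lemma~\ref{lem:cutoff}) satisfying $|\nabla\eta_R|\le C\sqrt{\eta_R}/R$ and $|\nabla\eta_R|\le C\sqrt{1-\eta_R}/R$; these inequalities are precisely what controls the cross terms and yields $i(g_{in,R})\to i(g_{in})$. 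For non-smooth $g_{in}$, it mollifies the square root and sets $g_{in,\delta}=(\varphi_\delta*\sqrt{g_{in}})^2$, so that $i(g_{in,\delta})=4\|\varphi_\delta*\sqrt{g_{in}}\|_{\dot H^1}^2\to i(g_{in})$ is immediate.

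\textbf{Monotonicity of the Fisher information for the limit.} This is a genuine gap. Lower semicontinuity gives only
\[
i(g(t))\le \liminf_n i(g^n(t))\le \liminf_n i(g_{in}^n)=i(g_{in}),
\]
i.e., boundedness by the initial value, not $i(g(t))\le i(g(s))$ for $0<s<t$: you cannot conclude $\liminf_n i(g^n(s))\le i(g(s))$ without upper semicontinuity, which you do not have. The paper closes this by a uniqueness bootstrap: once existence with $i(g(\cdot))\le i(g_{in})$ is established, reapply the result with initial datum $g(s)$ to produce $\tilde g$ with $i(\tilde g(\tau))\le i(g(s))$; Fournier's $L^1(0,T;L^\infty)$ uniqueness (Theorem~\ref{thm:Fournier}) then forces $\tilde g(\tau)=g(s+\tau)$, whence $i(g(t))\le i(g(s))$.
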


The proof of Theorem \ref{thm:long-cor-GS-GL} uses an approximation argument that ensures the Guillen-Silvestre a priori estimates can be applied here.
Consequently, applying Theorem \ref{thm:long-cor-GS-GL}, we obtain the following global-in-time existence result: 
\begin{theorem}\label{thm:long}(Global-in-time Existence)
Fix $\frac{3}{2} \le p < \infty$, $m > 6$, and $m_0 \ge 9/2$, and initial data $f_{in}$ such that 
\begin{equation}\label{eq:initial_data_moments}
    f_{in} \in L^p_{m_0} \cap L^1_m \quad \text{and} \quad f_{in} \ge \frac{a}{\brak{v}^{k}}, \quad \text{for some }a > 0 \text{ and }k \le \frac{m_0 - 3}{p-1}.
\end{equation} 
Then, \eqref{eq:landau} admits a global-in-time smooth solution $f:[0,\infty) \times \R^3 \to \R^+$ with initial data $f_{in}$. Moreover, $f$ has decreasing Fisher information, which satisfies the short-time smoothing estimate:
\begin{equation}\label{eq:Fisher}
    \int_{\R^3} \frac{\abs{\nabla f}^2}{f} \dd v \le \frac{C(a,p,k,m_0,f_{in})}{t} \qquad \text{for}\qquad 0 < t < 1.
\end{equation}
\end{theorem}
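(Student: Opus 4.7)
My plan is to combine Theorem~\ref{thm:short} with Theorem~\ref{thm:long-cor-GS-GL} by means of an instantaneous regularization of the Fisher information. The four steps are: (i) use Theorem~\ref{thm:short} to produce a smooth local solution $f$ on $[0,T_0]$; (ii) propagate the moment bound $f(t)\in L^1_m$ and the pointwise lower bound along the flow; (iii) establish the short-time Fisher information estimate \eqref{eq:Fisher} on $(0,T_0)$; (iv) at some $t_0\in(0,T_0)$, apply Theorem~\ref{thm:long-cor-GS-GL} with initial datum $f(t_0)$ to extend the solution to $[t_0,\infty)$ and glue with $f|_{[0,t_0]}$.

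\textbf{Local existence, moments, and lower bound.} Since $m_0\ge 9/2$, Theorem~\ref{thm:short} applies and provides a local smooth solution with $\|f(t)\|_{L^\infty}\lesssim t^{-3/(2p)}$. For $t>0$ the solution is smooth, hence $A[f]$ and $\nabla a[f]$ are locally bounded in time, and standard Lyapunov arguments for Landau-Coulomb propagate $f\in L^\infty_{\mathrm{loc}}((0,T_0);L^1_m)$ with $m>6$. The pointwise lower bound is propagated by a barrier argument: I would verify that a time-dependent family $\phi(t,v)=\tilde a(t)\brak{v}^{-\tilde k(t)}$ is a subsolution of the (now linear) parabolic equation satisfied by $f$, where $\tilde a(t)$ and $\tilde k(t)$ are allowed to deteriorate from their initial values $a$ and $k$. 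The compatibility constraint $k\le(m_0-3)/(p-1)$ in \eqref{eq:initial_data_moments} is exactly what is needed to close the barrier estimate, controlling the relevant weighted bounds of $A[f]$ and $\nabla a[f]$ through $f\in L^1_m\cap L^p_{m_0}$.

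\textbf{Fisher-information smoothing and gluing.} The heart of the proof is the bound $i(f(t))\le C/t$. To prove it I would compute $\tfrac{d}{dt}\int|\nabla\sqrt{f}|^2\,dv$ directly from \eqref{eq:landau} and obtain, schematically,
\begin{equation*}
\frac{d}{dt}\,i(f) \;\lesssim\; \bigl(\|f\|_{L^\infty}+\|\nabla a[f]\|_{L^\infty}^2\bigr)\,i(f) + R(f),
\end{equation*}
where $R(f)$ collects lower-order terms that involve $1/f$ pointwise and hence are controlled via the lower bound from the previous step. Multiplying by $t$, inserting the $L^\infty$-smoothing $\|f(t)\|_{L^\infty}\lesssim t^{-3/(2p)}$ from Theorem~\ref{thm:short}, and running a Gronwall argument on the product $t\cdot i(f(t))$ yields \eqref{eq:Fisher}. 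For any $t_0\in(0,T_0)$, $f(t_0)$ is smooth, lies in $L^1_m\cap W^{1,1}_{\mathrm{loc}}$ with $m>6$, and has finite Fisher information, so Theorem~\ref{thm:long-cor-GS-GL} supplies a unique global smooth solution $\tilde f$ on $[t_0,\infty)$ with datum $f(t_0)$ and decreasing Fisher information. Concatenating $f|_{[0,t_0]}$ with $\tilde f$ yields the desired global smooth solution on $[0,\infty)$, and \eqref{eq:Fisher} persists on $[t_0,1)$ by Fisher monotonicity.

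\textbf{Main obstacle.} The hard step is the Fisher smoothing (iii). At the critical exponent $p=3/2$ the $L^\infty$-bound degenerates only like $1/t$, which is borderline non-integrable, and $\nabla a[f]$ is not bounded by the $L^{3/2}$ norm of $f$; hence one cannot simply quote standard bounded-coefficient parabolic theory. The delicate calibration between the weighted lower bound, the $L^\infty$-smoothing rate, and the coercivity of the Landau operator is the technical core of the theorem and pins down the explicit role of the assumption $f_{in}\ge a\brak{v}^{-k}$ in \eqref{eq:initial_data_moments}.
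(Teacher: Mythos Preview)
Your high-level strategy---local existence, lower-bound propagation, Fisher regularization, then gluing via Theorem~\ref{thm:long-cor-GS-GL}---matches the paper. But your step~(iii), the core of the argument, uses the wrong mechanism and does not close.

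You propose to compute $\tfrac{d}{dt}\,i(f)$ directly and run a Gr\"onwall argument on $t\cdot i(f(t))$. At $p=3/2$ this fails: differentiating the product produces an additive term $i(f)=t^{-1}\cdot(t\,i(f))$, and together with your coefficient $\|f(t)\|_{L^\infty}\sim t^{-1}$ the resulting inequality is of the form $(t\,i(f))'\lesssim t^{-1}(t\,i(f))+\ldots$, whose Gr\"onwall factor $\exp\!\big(\int_s^t\tau^{-1}\,d\tau\big)=(t/s)^C$ blows up as $s\to 0^+$. Worse, you do not know a priori that $i(f(s))<\infty$ for small $s$, so the ODE is not even well-posed. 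More fundamentally, extracting a usable sign from $\tfrac{d}{dt}\,i(f)$ is exactly the Guillen--Silvestre computation; it cannot be treated as a routine energy estimate with lower-order remainders.

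The paper's route is entirely different and avoids differentiating $i(f)$. One first shows $i(f)\in L^1(0,T)$ via the pointwise inequality
\[
\frac{|\nabla f|^2}{f}=\frac{4}{p^2}\,f^{1-p}\,\bigl|\nabla f^{p/2}\bigr|^2\;\lesssim\;\brak{v}^{k(p-1)}\,\bigl|\nabla f^{p/2}\bigr|^2,
\]
obtained from the lower bound $f\gtrsim\brak{v}^{-k}$, and then integrates against the weighted energy bound $\int_0^T\!\!\int\brak{v}^{m_0-3}|\nabla f^{p/2}|^2<\infty$ that already comes out of the construction in Theorem~\ref{thm:short}. Once $i(f)\in L^1(0,T)$, Fisher \emph{monotonicity} (supplied by Theorem~\ref{thm:long-cor-GS-GL} applied at each positive time, together with Fournier's uniqueness) gives $t\,i(f(t))\le\int_0^t i(f(s))\,ds\le C$, hence \eqref{eq:Fisher}. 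This also locates the hypothesis $k\le(m_0-3)/(p-1)$ correctly: it is \emph{not} used in the barrier argument (the paper's barrier is simply $a\exp(-\eta t^{2/3})\brak{v}^{-k}$ with fixed $k$, relying only on $\|A[f(t)]\|_{L^\infty}\lesssim t^{-1/3}$), but rather ensures that the weight $\brak{v}^{k(p-1)}$ arising from $f^{1-p}$ is dominated by $\brak{v}^{m_0-3}$ in the displayed inequality above. Your attribution of this constraint to the barrier step is incorrect.
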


\begin{remark}
    There is an implicit compatibility condition hidden within assumption \eqref{eq:initial_data_moments}. Namely, the lower bound $\brak{v}^{-k}$ must belong to $L^p_{m_0} \cap L^1_m$. This holds if 
    \begin{equation*}
        \frac{m_0 - 3}{p - 1} \ge k > \max\left(\frac{m_0 + 3}{p}, m + 3\right).
    \end{equation*}
    In particular, this compatibility condition is satisfied for initial data $f_{in}$ satisfying
    \begin{equation*}
        f_{in} \in L^{3/2}(B_R), \qquad f_{in} \gtrsim 1 \quad \text{on }B_R, \qquad f_{in} \sim \abs{v}^{-k} \quad \text{for }\abs{v} \ge R,
    \end{equation*}
    provided $k > 9$.
\end{remark}

Theorem \ref{thm:long} is \emph{NOT} proved by iterating Theorem \ref{thm:short}. Instead, Theorem \ref{thm:short} is used to show that solutions to \eqref{eq:landau} with initial data satisfying \eqref{eq:initial_data_moments} satisfy the regularization estimates \eqref{eq:Linfty} and \eqref{eq:Fisher} for a short time. We then apply Theorem \ref{thm:long-cor-GS-GL} with $g_{in} = f(t)$ for any small time $t > 0$ to obtain a global solution. Global existence fundamentally relies upon the $L^1$ moments and the Fisher information, which are the only quantities besides the entropy we know how to propagate for all time.

Let us comment briefly on \eqref{eq:Fisher}. In fact, \eqref{eq:Fisher} is a consequence of the decay of the Fisher information $i(f)$ and the bound $i(f)\in L^1_{loc}(\R^+)$. The $L^1$ bound on the Fisher information is obtained by 
combining pointwise lower bounds for $f$ with the weighted $L^2$ estimates for $\nabla f$. Because the Landau equation does not generate polynomial lower bounds, we need to propagate a pointwise lower bound from the initial data. We note that this assumption has the undesirable side effect of precluding vacuum regions in the initial data. We expect that the pointwise lower can be weakened to a lower bound in an $L^p$ sense, but we do not pursue this here. 
Moreover, to overcome the technical difficulties created by $A[f]$ being unbounded at time $0$ for $p = 3/2$, we take a Stampacchia approach to the maximum principle.  Additionally, 
one could conceivably obtain a version of Theorem \ref{thm:long} for initial data with Gaussian or stretched exponential tails, using the extensive study of weighted $L^2$ estimates in \cite{CarrapatosoMischler}.

It is natural to ask whether the regularization estimates found in Theorem 1.1 hold for all time. Since we are not iterating Theorem 1.1 to produce a global solution, it is unclear in what form the estimates fond in Theorem 1.1 hold for arbitrarily large times. Using known regularization techniques, one can show that for $t >0$ the function $f$ can be bounded as
\begin{equation}\label{eq:long_time_far_from_equilibrium}
    \norm{f(t+1)}_{L^\infty} \le C(\norm{f(t)}_{L^1_{6+}},\norm{f(t)}_{L^3}) \le C(\norm{f(t)}_{L^1_{6+}},i(f(t))). 
\end{equation}
While the Fisher information decays, the $L^1_m$ moments may grow at most linearly in time, raising the possibility of infinite time blow-up. However, taking $m > 16$ in \eqref{eq:initial_data_moments}, one can show that the $L^1_{6^+}$ norm of $f(t)$ is bounded uniformly in time (for a precise reference see \cite[Lemma 2.1]{DesvillettesHeJiang}). As expected, this rules out infinite time blow-up for initial data satisfying \eqref{eq:initial_data_moments}. Previously, the results in \cite{DesvillettesHeJiang} quantitatively ruled out infinite time blow up for $H^1$ initial datum using a novel monotonicity formula connecting the $\dot{H}^1$ norm to the relative entropy. Alternatively, \cite{GolseGualdaniImbertVasseur_PartialRegularity1} qualitatively ruled out infinite time blow up via estimates on the Hausdorff dimension of the singular set of suitable $H$-solutions. See also \cite{GolseImbertVasseur_PartialRegularity2} for related partial regularity results in space-time.

While \eqref{eq:long_time_far_from_equilibrium} shows non-degeneracy for large time, more precise estimates can be obtained to study the convergence to equilibrium. Using the estimates in Section \ref{sec:short_time2} with $p = 2$ modified for $h = f - \mu$ (instead of $f$), where $\mu$ is the Maxwellian corresponding to the normalisation of $f$ \eqref{eq:normalisation}, we extend known convergence to equilibrium results in $L^1$ (see \cite[Theorem 2]{CarrapatosoDesvillettesHe}) and $L^2$ (see \cite[Corollary 1.3]{CarrapatosoMischler}) to $L^\infty$: 
\begin{corollary}\label{cor:long_time}
    For any smooth, rapidly decaying $f$ that solves Landau, and any $m \ge 9/2$,
    \begin{equation}\label{eq:long_time1}
         \norm{f(T) - \mu}_{L^\infty} \lesssim_{m}  \sup_{T-1 < t < T}\norm{f(t) - \mu}_{L^2_m}^4 + \norm{f(t) - \mu}_{L^2_m}^{4/7},
    \end{equation}
    where $\mu$ is the Maxwellian associated to the normalisation \eqref{eq:normalisation}.
    If $f_{in}$ has finite Fisher information and $f_{in} \in L^2(\exp(\lambda \brak{v}^s))$ for some $s \in (0,1/2)$ and $\lambda > 0$, then
    \begin{equation}\label{eq:long_time2}
        \norm{f(t) - \mu}_{L^\infty} \lesssim_{s,\lambda,f_{in}} \exp\left(-\lambda_0 t^{\frac{s}{3}}\right),
    \end{equation}
    for some $\lambda_0 = \lambda_0(s,\lambda, f_{in})$.
\end{corollary}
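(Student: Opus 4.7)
Setting $h:=f-\mu$ and using that $\mu$ is a stationary solution, the perturbation satisfies the linear parabolic equation
\begin{equation*}
\partial_t h \;=\; \nabla\cdot\bigl(A[f]\nabla h - \nabla a[f]\,h\bigr) \;+\; \nabla\cdot\bigl(A[h]\nabla\mu - \nabla a[h]\,\mu\bigr).
\end{equation*}
The first bracket is the Landau operator acting on $h$, and still enjoys the uniform lower bound $A[f]\gtrsim\brak{v}^{-3}\mathrm{Id}$ thanks to the conserved hydrodynamic moments of $f$. The second bracket is a new reaction term that depends linearly on $h$ through the convolutions $A[h]$ and $\nabla a[h]$, but is coupled against the fixed rapidly-decaying coefficients $\nabla\mu$ and $\mu$.

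For \eqref{eq:long_time1}, the plan is to rerun the short-time $L^2\to L^\infty$ smoothing scheme of Section \ref{sec:short_time2} with $p=2$ applied to $h$ in place of $f$. Testing the equation for $h$ against $h\brak{v}^m$, and separately against the Stampacchia truncation $(h-k)_+\brak{v}^m$, produces a coupled pair of weighted energy estimates: the diffusion contributes the coercive weighted Dirichlet energy; the self-interaction bracket $\nabla\cdot(A[f]\nabla h - \nabla a[f]h)$ is handled exactly as in Section \ref{sec:short_time2} via the $\eps$-Poincar\'{e} inequality (Lemma \ref{lem:eps_poincare}); and the Maxwellian-background bracket contributes reaction-type terms whose coefficients are bounded by norms of $\mu$ and $\nabla\mu$ in every $L^q$ space, and are therefore under free control. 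This yields a closed system of coupled differential inequalities for $X(t):=\|h(t)\|_{L^2_m}$ and $Y(t):=\|h(t)\|_{L^\infty}$ which, integrated over the unit window $(T-1,T)$ with the heat-type smoothing rate $t^{-3/(2p)}=t^{-3/4}$, gives \eqref{eq:long_time1}: the exponent $4$ arises from the quadratic self-interaction, and the exponent $4/7$ from the leading-order smoothing after absorbing a sublinear power of $Y$ on the right-hand side.

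For \eqref{eq:long_time2}, we invoke the stretched-exponential $L^2_m$ convergence of Carrapatoso-Mischler \cite[Corollary 1.3]{CarrapatosoMischler}: under our hypotheses on $f_{in}$, one has $\|f(t)-\mu\|_{L^2_m}\lesssim\exp(-\lambda_1 t^{s'})$ for any $m\ge 9/2$ and some $\lambda_1,s'>0$ depending on $s,\lambda,f_{in}$. Inserting this decay into \eqref{eq:long_time1} applied on $(t-1,t)$ and tracking the dominant $4/7$-power yields \eqref{eq:long_time2}, with the exponent $s/3$ inherited from matching the stretched-exponential rate through the smoothing exponent. The main obstacle I foresee is verifying that the coupled ODIs of Section \ref{sec:short_time2} actually close when centred at $\mu$ rather than at $0$: the new Maxwellian-background terms must be absorbed without relying on the critical-$L^{3/2}$ smallness that drives the original iteration. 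This should be feasible, since $\mu$ and its derivatives lie in every weighted Lebesgue space and the $\eps$-Poincar\'{e} inequality tolerates the $L^2_m$ smallness coming from the convergence to equilibrium, but it requires careful bookkeeping of exponents to ensure no loss in the final smoothing rate.
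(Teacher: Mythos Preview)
Your outline for \eqref{eq:long_time1} matches the paper's approach exactly: set $h=f-\mu$, rerun the $p=2$ De~Giorgi/energy machinery of Section~\ref{sec:short_time2} on $h$, and use the weighted $L^2\to L^\infty$ smoothing over a unit window. The paper points to \cite[Proposition~4.1]{GoldingGualdaniLoher} for the detailed iteration, and your identification of the new Maxwellian-background terms as harmless (since $\mu,\nabla\mu$ lie in every weighted Lebesgue space) is the right observation. Your explanation of the specific exponents $4$ and $4/7$ is vague, but the mechanism is correct.

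For \eqref{eq:long_time2} there is a gap. You invoke \cite[Corollary~1.3]{CarrapatosoMischler} as if it directly yields global stretched-exponential $L^2_m$ decay under the stated hypotheses (finite Fisher information and $f_{in}\in L^2(\exp(\lambda\brak{v}^s))$). But Carrapatoso--Mischler is a near-equilibrium theory: it requires the solution to already be close to $\mu$, and its global statements are conditional on a priori control that was not available for large data before Guillen--Silvestre. The paper therefore explicitly combines \emph{three} ingredients: the weighted $L^1$ convergence of \cite[Theorem~2]{CarrapatosoDesvillettesHe} to enter the basin of attraction, the near-equilibrium weighted $L^2$ result \cite[Theorem~1.1]{CarrapatosoMischler}, and crucially Theorem~\ref{thm:long-cor-GS-GL} of the present paper (Fisher monotonicity for polynomially localised data) to guarantee global existence of a smooth solution under these hypotheses in the first place. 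Without the last ingredient you do not even have a global solution to which the convergence results apply; without the first you cannot get into the Carrapatoso--Mischler regime. Your sketch collapses these steps into a single citation, which is where the argument would fail.
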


The regularization estimate \eqref{eq:long_time1} is proved using a De Giorgi iteration that has essentially appeared in the authors' previous work as \cite[Proposition 4.1]{GoldingGualdaniLoher}. A detailed proof can be found in the first author's dissertation as Proposition 4.12 and Proposition 4.13.

The asymptotic behavior in \eqref{eq:long_time2} then follows combining known results on convergence to equilibrium in weighted $L^1$ (specifically \cite[Theorem 2]{CarrapatosoDesvillettesHe}), weighted $L^2$ (specifically \cite[Theorem 1.1]{CarrapatosoMischler}), and the monotonicity of the Fisher information for polynomial localized solutions proved in Theorem \ref{thm:long-cor-GS-GL}) above.

The construction of the global solution in Theorem \ref{thm:long} relied on propagating the polynomial lower bound that is assumed on the initial data for some amount of time. Interestingly, compared to the statement in Corollary \ref{cor:long_time}, Maxwellian behaviour only forms at larger times.

\vspace{0.25cm}

{\bf Open Problems:} Given the rapid recent development of the theory for the homogeneous Landau equation, we collect here some interesting open problems. We emphasize problems whose resolution, in our opinion, would advance the understanding of the collision operator in a manner that can be transferred to the physical inhomogeneous Landau equation.
\begin{itemize}
    \item Regularization estimates for smooth solutions using only the mass, energy, and entropy. That is, do suitably nice solutions to \eqref{eq:landau} satisfy $L^\infty(0,T;L^1_2 \cap L \log L) \to L^\infty(t,T;L^\infty)$ estimates? If so, can one verify the conjectured regularization rate of $\frac{1}{t}$? 
    \item A study of $L^\infty$ solutions, with only locally finite mass: Such solutions appear naturally in the study of local regularity as the limit of suitable rescaling procedures (zooming in about a single point). Additionally, self-similar solutions necessarily have infinite mass. See \cite{Silvestre_Review} for a more in depth discussion.
    \item Existence of additional Lypanov functionals: For example, are there monotone weighted quantities? Are there higher order functionals, such as the derivatives of the Fisher information, which are monotone? 
\end{itemize}

\vspace{0.25cm}
    
{\bf{Outline:}} The rest of the paper is divided as follows: In Section \ref{sec:preliminaries} we introduce several known results without proof, as well as frequently used technical tools (i.e. embeddings and interpolation estimates). In Section \ref{Sec:3}, we recall short time well posedness results and prove Theorem \ref{thm:long-cor-GS-GL}.  The rest of the paper is then devoted to proofs of Theorem \ref{thm:short} and Theorem \ref{thm:long}. The proofs split into two distinct cases, $p=3/2$ and $p > 3/2$. Section \ref{sec:DeGiorgi}, Section \ref{sec:short_time1}, and Section \ref{sec:long_time1} are devoted to the case $p = 3/2$. In Section \ref{sec:DeGiorgi}, we prove novel $L^{3/2} \to L^\infty$ regularization estimates for smooth solutions using the De Giorgi method. In Section \ref{sec:short_time1}, we use an ODE argument to propagate the $L^{3/2}$ norm for a short time. Combined with the $L^\infty$ regularization estimates, additional higher regularity estimates, and a compactness argument we deduce the $p = 3/2$ case of Theorem \ref{thm:short}. In Section \ref{sec:long_time1}, we propagate the lower bound of the initial data using a barrier argument and perform additional weighted estimates to show that under the hypotheses of Theorem \ref{thm:long}, the Fisher information becomes instantaneously finite, which concludes the proof of Theorem \ref{thm:long} for the $p = 3/2$ case. In Section \ref{sec:short_time2}, we show the analogous regularization estimates for $p > 3/2$ and show Theorem \ref{thm:short} for the $p > 3/2$ case. In Section \ref{sec:long_time2}, we show Theorem \ref{thm:long} for the $p > 3/2$ case.

\section{Known results and technical lemmas}\label{sec:preliminaries}

It is known that $L^1$-moments of any order $s > 2$ grow at most linearly in time.
\begin{lemma}[\protect{Propagation of \texorpdfstring{$L^1$}{} moments \cite[Lemma 2.1]{CarrapatosoDesvillettesHe}}]\label{lem:moments}
Let $k > 2$. Fix a non-negative initial datum $f_{in} \in L^1_k \cap L\log L$ satisfying the normalization \eqref{eq:normalisation}. Suppose $f:\R^+ \times \R^3 \rightarrow \R^+$ is any weak solution of \eqref{eq:landau} with initial datum $f_{in}$. 
Then, for each $t \ge 0$,
\begin{equation*}
\int_{\R^3}f(t,v)\brak{v}^k\dd v \leq C(1+t).
\end{equation*}
\end{lemma}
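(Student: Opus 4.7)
My plan is to test the equation against $\varphi(v)=\langle v\rangle^k$ and derive a differential inequality for $M_k(t):=\int_{\R^3}f(t,v)\langle v\rangle^k\dd v$. Starting from the conservative form \eqref{eq:landau} and integrating by parts twice, the distributional identity $\partial_i\partial_j A_{ij}[f]=-f$ (which encodes the Coulomb singularity of the kernel $(-\Delta)^{-1}$) produces a reaction contribution $+f^2\langle v\rangle^k$ that cancels exactly with the corresponding term coming from $\Delta a[f]=-f$. What remains is
\begin{equation*}
    \frac{d}{dt}M_k(t)=2\int_{\R^3} f\,b[f]\cdot\nabla\langle v\rangle^k\dd v + \int_{\R^3} f\,A[f]\!:\!D^2\langle v\rangle^k\dd v,
\end{equation*}
where $b[f]=\nabla a[f]$. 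Both terms are now linear in $f$ against the nonlocal coefficients, with no reaction contribution to worry about.

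The next step is to exploit the decay of $A[f]$ and $b[f]$ in the radial direction. Using the projection identity $\Pi(v-v_*)v=\Pi(v-v_*)v_*$, the radial component $A[f](v)v\cdot v/|v|^2$ is controlled by $(|v_*|^2/|v-v_*|)*f$, which for $f\in L^1_2$ is of order $\langle v\rangle^{-1}$. Since $D^2\langle v\rangle^k\sim \langle v\rangle^{k-2}I+\langle v\rangle^{k-4}v\otimes v$, this yields $A[f]\!:\!D^2\langle v\rangle^k\lesssim \langle v\rangle^{k-3}$, with an analogous antisymmetric cancellation for $b[f]\cdot v\,\langle v\rangle^{k-2}$ (using $|v|^2\le 2|v-v_*|^2+2|v_*|^2$ together with the antisymmetry of the Coulomb drift). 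Combined with conservation of mass and energy, this yields the closed inequality $\frac{d}{dt}M_k(t)\le C_k(1+M_{k-1}(t))$. Iterating from the base case $M_2\equiv 3$ and interpolating the lower-order moments between $M_0$ and $M_k$ produces the claimed bound $M_k(t)\le C(1+t)$, with $C$ depending on $k$ and $f_{in}$.

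The main obstacle is the rigorous justification at the level of weak/$H$-solutions: the products appearing in the weak formulation involve the singular Coulomb kernel, so one cannot simply argue pointwise. The standard remedy is a regularization procedure—smooth both the kernel and the initial data, establish the a priori estimate uniformly for the approximations, and pass to the limit using the weak stability of the $H$-solution framework. Since this technical argument is already carried out in detail in \cite[Lemma 2.1]{CarrapatosoDesvillettesHe}, the plan is simply to verify that the weak solutions considered here fit their hypotheses and quote the resulting bound.
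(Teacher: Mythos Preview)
The paper does not actually prove this lemma: it is stated in Section~\ref{sec:preliminaries} among ``known results without proof'' and is simply quoted from \cite[Lemma 2.1]{CarrapatosoDesvillettesHe}. Your proposal ends up in the same place---deferring the rigorous argument to that reference---so in that sense the two agree.

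What you add beyond the paper is a sketch of the underlying computation, and that sketch is essentially correct. The identity
\[
\frac{d}{dt}M_k(t)=2\int f\,\nabla a[f]\cdot\nabla\langle v\rangle^k\dd v+\int f\,A[f]\!:\!D^2\langle v\rangle^k\dd v
\]
follows from the non-divergence form $\partial_t f=A[f]\!:\!\nabla^2 f+f^2$ after two integrations by parts, exactly as you describe, and the cancellation of the $f^2$ term is genuine. The moment bound then comes from the anisotropic decay $\mathrm{tr}\,A[f]\lesssim\langle v\rangle^{-1}$ and $A[f]v\cdot v\lesssim\langle v\rangle$ together with the antisymmetric structure of the drift term, leading to $M_k'\le C(1+M_{k-1})$ and hence linear growth by induction on $k$. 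One small point: the induction step is cleaner if you interpolate $M_{k-1}\le M_k^{(k-3)/(k-2)}M_2^{1/(k-2)}$ and close directly, rather than iterating down one unit at a time, but either route works. Your remark that the passage to weak/$H$-solutions requires a regularization is the right caveat, and citing \cite{CarrapatosoDesvillettesHe} for it is appropriate.
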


We will often use the uniqueness result due to Fournier:
\begin{theorem}[Uniqueness \protect{\cite[Theorem 2]{Fournier}}]\label{thm:Fournier}
    Suppose $f,\ g:[0,T]\times \R^3 \rightarrow \R^+$ are solutions to \eqref{eq:landau} in the sense of distributions with the same initial datum $f_{in} \in L^1_2$ and both belong to $L^\infty(0,T;L^1_2) \cap L^1(0,T;L^\infty)$. Then, $f(t) = g(t)$ for all $0 < t < T$.
\end{theorem}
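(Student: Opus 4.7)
I would take the probabilistic coupling approach due to Fournier. To each solution $f$ of \eqref{eq:landau} one associates the nonlinear McKean--Vlasov SDE
\begin{equation*}
    dX_t^f = -\nabla a[f_t](X_t^f)\, dt + \sqrt{2A[f_t]}(X_t^f)\, dB_t, \qquad \operatorname{Law}(X_t^f) = f(t,\cdot),
\end{equation*}
where $B_t$ is a standard Brownian motion on $\R^3$ and the positive symmetric square root is well defined by the known lower bound $A[f_t] \succeq c_0 \brak{v}^{-3} I$. Given the two solutions $f$ and $g$ in the statement, I would realize $X_t^f$ and $X_t^g$ on a common probability space starting from a shared $X_0 \sim f_{in}$ and driven by the \emph{same} Brownian motion. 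The key quantity is $Z(t) := \mathbb{E}[|X_t^f - X_t^g|^2]$, which dominates $W_2^2(f(t), g(t))$; showing $Z \equiv 0$ forces $f = g$.

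Applying It\^o's formula to $|X_t^f - X_t^g|^2$ and taking expectation gives, schematically,
\begin{equation*}
    \tfrac{d}{dt} Z(t) \le 2\,\mathbb{E}\big[(X_t^f - X_t^g) \cdot (\nabla a[g_t](X_t^g) - \nabla a[f_t](X_t^f))\big] + \mathbb{E}\big[\big|\sqrt{2A[f_t]}(X_t^f) - \sqrt{2A[g_t]}(X_t^g)\big|^2\big].
\end{equation*}
The plan is to split each difference by the triangle inequality into a \emph{spatial increment} of a fixed coefficient (e.g.\ $\nabla a[f_t](X_t^f) - \nabla a[f_t](X_t^g)$) and a \emph{coefficient increment} at a fixed point (e.g.\ $\nabla a[f_t - g_t](X_t^g)$). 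The spatial increments would be estimated by log-Lipschitz continuity of $\nabla a[f_t]$ and $\sqrt{A[f_t]}$, which is the best one can extract from the Coulomb kernel under the hypotheses, with modulus controlled by $\|f_t\|_{L^\infty}$. The coefficient increments would be estimated by Kantorovich duality: realizing $f_t - g_t$ as the marginal difference of an optimal $W_2$-coupling $\pi$ of $(f_t, g_t)$, one rewrites $\nabla a[f_t - g_t](X_t^g)$ as a Coulomb-kernel increment integrated against $\pi$ and applies the same log-Lipschitz bound to produce a factor of $W_2(f_t, g_t)$ with an $L^\infty$-dependent prefactor.

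Combining the bounds, the plan is to reach a differential inequality of the form
\begin{equation*}
    \tfrac{d}{dt} Z(t) \le \psi(t)\, \Omega(Z(t)), \qquad \psi(t) \lesssim 1 + \|f_t\|_{L^\infty} + \|g_t\|_{L^\infty},
\end{equation*}
where $\Omega$ is an Osgood modulus of continuity, i.e.\ $\int_0^1 ds/\Omega(s) = +\infty$. The hypothesis $f, g \in L^1(0,T;L^\infty)$ makes $\psi \in L^1(0,T)$, and since $Z(0) = 0$, the Osgood uniqueness lemma forces $Z \equiv 0$, which finishes the argument. The main obstacle is precisely the loss of Lipschitz regularity at the Coulomb singularity: under only an $L^\infty$ bound on $f_t$, the coefficients $\nabla a[f_t]$ and $\sqrt{A[f_t]}$ are at best log-Lipschitz, so the standard Gr\"onwall argument breaks down, and the $L^1(0,T;L^\infty)$ time integrability of both solutions must be used essentially to absorb the time integral and allow the weaker Osgood bound to close.
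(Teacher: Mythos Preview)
The paper does not prove this statement; Theorem~\ref{thm:Fournier} is quoted verbatim from \cite{Fournier} and used as a black box throughout. There is thus no ``paper's own proof'' to compare against.

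Your sketch does reproduce the broad architecture of Fournier's original argument---probabilistic coupling of two nonlinear diffusions, log-Lipschitz estimates on the Coulomb coefficients controlled by $\|f_t\|_{L^\infty}$, and an Osgood-type closing with $\psi \in L^1(0,T)$---so as a plan it is on the right track. Two technical caveats are worth flagging. First, Fournier works with a white-noise SDE of collision type, $dV_t = \int b(V_t - V^*_t(\alpha))\,d\alpha\,dt + \int \sigma(V_t - V^*_t(\alpha))\,W(d\alpha,dt)$, rather than the Fokker--Planck form you wrote; this avoids having to take a matrix square root of $A[f_t]$ and hence avoids needing the coercivity bound $A[f]\succeq c_0\brak{v}^{-3}I$, which in turn requires an entropy bound not present among the hypotheses of the theorem. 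Second, your drift term has a sign and factor issue: from $\partial_t f = \nabla\cdot(A[f]\nabla f - \nabla a[f]f)$ and $\nabla\cdot A[f]=\nabla a[f]$ one gets drift $+2\nabla a[f]$, not $-\nabla a[f]$. Neither point breaks the strategy, but the first is a genuine simplification in Fournier's setup that your formulation would have to work harder to recover.
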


Moreover, we collect all bounds on the coefficients in the following lemmas. We generally require mass, energy and entropy of $f$ to be bounded uniformly. If we know that $f \in L^p$ for some $p > 1$, then the entropy bound follows naturally from the $L^p$ bound.
\begin{lemma}\label{lem:entropy-bound}
Suppose $f:\R^3 \rightarrow \R^+$ satisfies for any $p > 1$
    \begin{equation*}
        0 < m_0 \le \int_{\R^3} f \dd v \le M_0, \qquad \int_{\R^3} |v|^2 f \dd v \le E_0, \qquad \int_{\R^3} f^p  \dd v \le K.
    \end{equation*}
    Then, 
    \begin{equation*}
        \int_{\R^3} f \abs{\log(f)} \dd v \le C(m_0,M_0,E_0,K,p).
    \end{equation*}
\end{lemma}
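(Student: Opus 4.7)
The plan is to establish the entropy bound by separately controlling the positive and negative parts of $f\log f$, since the $L^p$ bound naturally handles regions where $f$ is large while the energy bound handles regions where $f$ is small. Writing $|\log f| = \log f + 2(\log f)_-$, we have
\begin{equation*}
    \int_{\R^3} f\abs{\log f}\dd v = \int_{\set{f \ge 1}} f\log f \dd v + \int_{\set{f < 1}} f\abs{\log f} \dd v,
\end{equation*}
and we bound the two pieces separately.

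For the first term, I would use the elementary calculus inequality $\log x \le \frac{1}{p-1} x^{p-1}$ valid for all $x \ge 1$ (check: both sides vanish to leading order as $x$ passes through $1$ and the right hand side grows like a power) to obtain
\begin{equation*}
    \int_{\set{f\ge 1}} f\log f \dd v \le \frac{1}{p-1}\int_{\R^3} f^p \dd v \le \frac{K}{p-1}.
\end{equation*}

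The second term is the only place where the hypothesis $p > 1$ is not directly enough, because $x\mapsto x|\log x|$ is integrable but large near $0$, so I would further split the set $\set{f<1}$ according to the Gaussian threshold $\set{f \ge e^{-|v|^2/2}}$ and its complement. On $\set{e^{-|v|^2/2} \le f < 1}$ we have $|\log f| \le |v|^2/2$, so this contribution is at most $\frac{1}{2}\int_{\R^3} |v|^2 f \dd v \le E_0/2$. On the remaining set $\set{f < e^{-|v|^2/2}}$, I would use $x|\log x| \le 2\sqrt{x}$ for $x\in (0,1)$ (an easy calculus bound) to deduce
\begin{equation*}
    f\abs{\log f} \le 2\sqrt{f} \le 2\, e^{-|v|^2/4},
\end{equation*}
which is integrable with an absolute constant. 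Summing the three contributions yields $\int f|\log f| \le C(m_0, M_0, E_0, K, p)$.

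The lemma is essentially a routine consequence of the interplay between the $L^p$ tail control and the Gaussian-scale tail control supplied by the energy. There is no serious obstacle; the only subtle point is choosing the splitting threshold so that both the $v$-integrable and $f$-integrable bounds become effective, which the Gaussian cutoff $e^{-|v|^2/2}$ handles cleanly. Note that the lower mass bound $m_0 > 0$ is not needed in this direction; it would only be needed to bound the entropy from below, which is not requested here.
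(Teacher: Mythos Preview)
Your proof is correct. The paper does not actually supply a proof of this lemma; it treats the statement as routine and only remarks that ``the entropy bound follows naturally from the $L^p$ bound.'' Your argument is the standard one: control the region $\{f\ge 1\}$ by the $L^p$ norm via $\log x \le \tfrac{1}{p-1}x^{p-1}$, and control $\{f<1\}$ by splitting at the Gaussian threshold $e^{-|v|^2/2}$ so that either the second moment or the pointwise bound $x|\log x|\le 2\sqrt{x}$ takes over. All the calculus inequalities you invoke are valid (in particular $\sup_{x\in(0,1)}\sqrt{x}\,|\log x|=2/e<2$), and your observation that the lower mass bound $m_0$ is not used here is also correct.
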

We then state the uniform lower bound satisfied by the diffusion coefficient. Note that by the previous Lemma, we can relax the entropy bound by the $L^p$ bound on $f$, if $p > 1$. This ellipticity result is by now standard in the literature.
\begin{lemma}\label{lem:ellipticity}
    Suppose $f:\R^3 \rightarrow \R^+$ satisfies
    \begin{equation*}
        0 < m_0 \le \int_{\R^3} f \;\dd v \le M_0, \qquad \int_{\R^3} |v|^2 f\; \dd v \le E_0, \qquad \int_{\R^3} f \abs{\log(f)} \;\dd v \le H_0.
    \end{equation*}
    Then, there is a constant $c_0 = c_0(m_0, M_0, E_0, H_0)$ such that
    \begin{equation*}
        A[f](v) \ge \frac{c_0}{\brak{v}^3} I.
    \end{equation*}
\end{lemma}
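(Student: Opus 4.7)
The plan is to reduce the matrix lower bound to a scalar lower bound for the quadratic form $e^\top A[f](v)\,e$ uniformly in unit vectors $e$, and then to exhibit a concrete mass of $f$ in a region that witnesses the necessary non-degeneracy.

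First I would write out, for a fixed $v\in\R^3$ and fixed unit vector $e\in \mathbb S^2$,
\begin{equation*}
8\pi\, e^\top A[f](v)\, e \;=\; \int_{\R^3} \frac{|\Pi(v-w)e|^2}{|v-w|}\, f(w)\dd w \;=\; \int_{\R^3} \frac{|P_e(w-v)|^2}{|v-w|^3}\, f(w)\dd w,
\end{equation*}
where $P_e$ denotes the orthogonal projection onto $e^\perp$. Thus the integrand is non-negative and vanishes precisely when $w$ lies on the line $L_{v,e}:=v+\R e$. The task is to produce a set on which $|P_e(w-v)|$ is bounded below by a constant and $|v-w|$ is bounded above by $C\brak{v}$, and which carries a definite fraction of the mass of $f$.

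Next, using Chebyshev with the energy bound and the mass bound, I would choose $L=L(m_0,E_0)$ so that $\int_{B_L} f\dd w\ge m_0/2$; by construction $B_L\subset\{w:|v-w|\le \brak{v}(1+L)\}$. Then I would handle the thin-tube $T_\delta:=\{w:|P_e(w-v)|<\delta\}$ by the standard truncation trick $f=f\mathbf 1_{f\le \lambda}+f\mathbf 1_{f>\lambda}$, giving
\begin{equation*}
\int_{T_\delta\cap B_L} f \dd w \;\le\; \lambda\,|T_\delta\cap B_L| + \frac{1}{\log\lambda}\int_{\R^3} f\abs{\log f}\dd w \;\le\; 2\pi\delta^2 L\lambda + \frac{H_0}{\log\lambda}.
\end{equation*}
Choosing first $\lambda=\lambda(m_0,H_0)$ large enough to kill the second term, then $\delta=\delta(m_0,E_0,H_0)$ small enough to kill the first, yields $\int_{T_\delta\cap B_L} f\dd w\le m_0/4$, so $B_L\setminus T_\delta$ has $f$-mass at least $m_0/4$ and there $|P_e(w-v)|\ge\delta$.

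Plugging back in,
\begin{equation*}
8\pi\, e^\top A[f](v)\, e \;\ge\; \frac{\delta^2}{(\brak{v}(1+L))^3}\int_{B_L\setminus T_\delta} f\dd w \;\ge\; \frac{c_0}{\brak{v}^3},
\end{equation*}
with $c_0$ depending only on $m_0,M_0,E_0,H_0$, and the bound is uniform in $e\in \mathbb S^2$, finishing the proof. The main technical point is the tube estimate in step three: the argument needs $f$ to be prevented from concentrating on any one-dimensional line, and this is precisely what the entropy bound buys via the truncation inequality — without the entropy control, the tube could capture an arbitrarily large fraction of the mass for small $\delta$, which would destroy the lower bound.
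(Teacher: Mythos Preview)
The paper does not actually supply a proof of this lemma; it states the result and remarks that ``this ellipticity result is by now standard in the literature.'' So there is no in-paper proof to compare against.

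Your argument is correct and is precisely the standard one found in the literature (e.g.\ Desvillettes--Villani, Silvestre): reduce to a scalar lower bound for $e^\top A[f](v)e$, localize the mass to a ball via the second-moment bound, and use the entropy bound through the truncation $f=f\mathbf 1_{\{f\le\lambda\}}+f\mathbf 1_{\{f>\lambda\}}$ to show that no definite fraction of the mass can live in an arbitrarily thin tube around the degenerate line $v+\R e$. The only cosmetic point is that the constant $c_0$ you obtain does not actually depend on $M_0$ (only on $m_0,E_0,H_0$), which is consistent with the paper's statement since $c_0$ is merely asserted to be \emph{some} function of the four parameters.
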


Moreover, we collect standard upper bounds on the coefficients in the following statement. A reference can for example be found in \cite[Lemma 4.2]{CarrapatosoMischler}.
\begin{lemma}\label{lem:coefficient_bounds}
    For each $1 \le p < 3/2 < q \le \infty$,
    \begin{equation*}
        \norm{A[f]}_{L^\infty} \lesssim \norm{f}_{L^1}^{\frac{2q-3}{3q-3}}\norm{f}_{L^q}^{\frac{q}{3q-3}} \qquad \mathrm{and}\qquad \norm{A[f]}_{L^\infty} \lesssim \norm{f}_{L^p}^{\frac{2p}{3}}\norm{f}_{L^\infty}^{\frac{3-2p}{3}}.
    \end{equation*}
    For each $1 \le p < 3 < q \le \infty$,
    \begin{equation*}
        \norm{\nabla a[f]}_{L^\infty} \lesssim \norm{f}_{L^1}^{\frac{q-3}{3q-3}}\norm{f}_{L^q}^{\frac{2q}{3q-3}} \qquad \text{and} \qquad \norm{\nabla a[f]}_{L^\infty} \lesssim \norm{f}_{L^p}^{\frac{p}{3}}\norm{f}_{L^\infty}^{\frac{3 - p}{3}}.
    \end{equation*}
    For any $3 < p \leq \infty$ and $m > 3\left(p - 1\right)$,
\begin{equation*}
        \abs{\nabla a[f](v)} + \abs{\nabla A[f](v)} \lesssim \brak{v}^{-2}\norm{f}_{L^p_m}.
    \end{equation*}
\end{lemma}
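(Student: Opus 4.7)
\textbf{Proof plan for Lemma \ref{lem:coefficient_bounds}.} The whole lemma is a collection of elementary pointwise convolution estimates exploiting that $A[f]$ has kernel of size $\lesssim |w|^{-1}$ and that $\nabla a[f]$ and (after differentiating the matrix kernel once) $\nabla A[f]$ have kernel of size $\lesssim |w|^{-2}$. The plan is the standard one of splitting the convolution at a radius $R$ and optimizing $R$; the only nontrivial point will be the weighted pointwise bound, which will require a second splitting based on $|v_0|$.

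\textbf{Unweighted bounds.} For the first pair of inequalities, fix $v_0\in\R^3$ and write
\[
|A[f](v_0)| \lesssim \int_{|w|\le R}\frac{|f(v_0-w)|}{|w|}\dd w + \int_{|w|>R}\frac{|f(v_0-w)|}{|w|}\dd w.
\]
For the $L^1$--$L^q$ pair I apply H\"older with exponents $(q,q')$ on the first term, using that $\int_{|w|\le R}|w|^{-q'}\dd w \lesssim R^{3-q'}$ provided $q'<3$ (i.e.\ $q>3/2$), obtaining a bound $\norm{f}_{L^q}R^{2-3/q}$; the second term is $\lesssim R^{-1}\norm{f}_{L^1}$. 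Optimizing in $R$ (setting the two contributions equal) gives $R=(\norm{f}_{L^1}/\norm{f}_{L^q})^{q/(3q-3)}$ and the claimed bound. For the $L^p$--$L^\infty$ pair, one instead estimates the inner integral by $\norm{f}_{L^\infty}R^2$ and the outer by H\"older with exponent $p'$ (requiring $p'>3$, i.e.\ $p<3/2$), producing $\norm{f}_{L^p}R^{2-3/p}$; the optimal choice is $R=(\norm{f}_{L^p}/\norm{f}_{L^\infty})^{p/3}$. The bounds on $\nabla a[f]$ are identical except that the kernel $|w|^{-2}$ forces $2q'<3$ (hence $q>3$) or $2p'>3$ (hence $p<3$), and the algebra of the optimization yields exactly the stated exponents $(q-3)/(3q-3)$ and $2q/(3q-3)$, respectively $p/3$ and $(3-p)/3$. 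These are purely bookkeeping computations once the split is set up.

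\textbf{Weighted pointwise bound.} For the last inequality, suppose $p>3$ and $m>3(p-1)$. For $|v_0|\le 1$ the factor $\brak{v_0}^{-2}$ is $\sim 1$, and the global bounds just proved with $q=p>3$ yield $|\nabla a[f](v_0)|+|\nabla A[f](v_0)|\lesssim \norm{f}_{L^1}^{(p-3)/(3p-3)}\norm{f}_{L^p}^{2p/(3p-3)}\lesssim \norm{f}_{L^p_m}$, because the weight condition $m>3(p-1)$ implies $\brak{v}^{-m/(p-1)}\in L^1$ and hence $\norm{f}_{L^1}\lesssim \norm{f}_{L^p_m}$ by H\"older. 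For $|v_0|\ge 1$, split the convolution at $R=|v_0|/2$:
\[
\bigl|\nabla a[f](v_0)\bigr|+\bigl|\nabla A[f](v_0)\bigr|\lesssim \int_{|v-v_0|\le |v_0|/2}\frac{f(v)}{|v-v_0|^2}\dd v +\int_{|v-v_0|>|v_0|/2}\frac{f(v)}{|v-v_0|^2}\dd v.
\]
On the outer region $|v-v_0|^{-2}\le 4|v_0|^{-2}\lesssim \brak{v_0}^{-2}$, so the contribution is $\lesssim \brak{v_0}^{-2}\norm{f}_{L^1}\lesssim \brak{v_0}^{-2}\norm{f}_{L^p_m}$. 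On the inner region I have $\brak{v}\sim\brak{v_0}$, so by H\"older in the pair $(p,p')$,
\[
\int_{|v-v_0|\le |v_0|/2}\frac{f(v)}{|v-v_0|^2}\dd v\le \norm{f}_{L^p_m}\brak{v_0}^{-m/p}\Bigl(\int_{|w|\le |v_0|/2}|w|^{-2p'}\dd w\Bigr)^{1/p'}.
\]
The integral converges exactly because $p>3$ gives $2p'<3$, and equals a constant times $|v_0|^{3-2p'}$, producing an overall bound $\lesssim \brak{v_0}^{1-3/p-m/p}\norm{f}_{L^p_m}$. The hypothesis $m>3(p-1)$ is precisely what makes $1-3/p-m/p\le -2$, which closes the estimate.

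\textbf{Main difficulty.} There is no real obstacle: every step is a H\"older inequality plus a scaling optimization. The only thing to be careful about is matching the range conditions on $p,q$ to the values of $p'$ or $q'$ for which the local integral of $|w|^{-\alpha}$ converges, and, in the weighted estimate, verifying that $m>3(p-1)$ is simultaneously the threshold needed to put $\brak{v}^{-m/(p-1)}$ in $L^1$ and the threshold that makes the inner-region exponent $\le -2$ after dropping the prefactor from the truncated kernel integral.
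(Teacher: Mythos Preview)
Your proof is correct and complete; the splitting-at-radius-$R$ argument for the unweighted bounds and the near/far decomposition based on $|v_0|$ for the weighted pointwise estimate are exactly the standard route, and your verification that $m>3(p-1)$ is simultaneously the threshold for $\brak{v}^{-m/(p-1)}\in L^1$ and for the inner-region exponent $1-3/p-m/p\le -2$ is accurate. The paper itself does not prove this lemma at all but simply cites \cite[Lemma 4.2]{CarrapatosoMischler} as a reference, so there is no proof in the paper to compare against.
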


Finally, we repeatedly use weighted interpolation and weighted Sobolev estimates. For the convenience of the reader, we collect these estimates in the following lemma. Its proof may be found in the Appendix \ref{appendix}. 
\begin{lemma}[Sobolev and Interpolation Inequalities] \label{lem:sobolev}
    For each $k \ge 3$, there are $C_1 = C_1(k) > 0$ and $C_2 = C_2(k) > 0$ so that for all $f\in \S(\R^3)$,
    \begin{equation}\label{eq:sobolev}
        \left(\int_{\R^3} \brak{v}^{3k-9} f^6 \dd v\right)^{\frac{1}{3}} + C_1 \int_{\R^3} \brak{v}^{k-5} f^2 \dd v \le C_2 \int_{\R^3} \brak{v}^{k-3}|\nabla f|^2 \dd v.
    \end{equation}
    Moreover, let $1 < p < q < 3p < \infty$. Then, for any $f \in \S(\R^3)$,
    \begin{equation}\label{eq:interpolation-1}
        \int_{\R^3} \brak{v}^k f^{q} \le C(k,p) \left(\int_{\R^3} \brak{v}^m f^{p} \dd v\right)^{\frac{3p-q}{2p}}\left(\int_{\R^3} \brak{v}^{k-3} \abs{\nabla f^{p/2}}^2 \dd v\right)^{\frac{3(q-p)}{2p}},
    \end{equation}
    where
    \begin{equation*}
         m = \frac{2kp - (k-3)(3q-3p)}{3p-q}.
    \end{equation*}
\end{lemma}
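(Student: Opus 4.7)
The plan is to derive both estimates from the three-dimensional Sobolev embedding $\dot{H}^1 \hookrightarrow L^6$ applied to a suitably reweighted function, together with a single integration by parts and (at the borderline value $k=3$) the classical Hardy inequality.

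For \eqref{eq:sobolev}, I would introduce $g = \brak{v}^{(k-3)/2} f$ and set $w = \brak{v}^{(k-3)/2}$. Sobolev gives $(\int_{\R^3}\brak{v}^{3k-9} f^6 \dd v)^{1/3} = \|g\|_{L^6}^2 \le S\int|\nabla g|^2 \dd v$. Expanding
\[|\nabla g|^2 = w^2|\nabla f|^2 + \tfrac{1}{2}\nabla(w^2)\cdot\nabla(f^2) + |\nabla w|^2 f^2,\]
I would integrate the cross term by parts to produce $-\tfrac{1}{2}\Delta(w^2)f^2$. Using the explicit derivatives $\Delta(\brak{v}^{k-3}) = (k-3)[(k-2)\brak{v}^{k-5} - (k-5)\brak{v}^{k-7}]$, $|\nabla w|^2 = \tfrac{(k-3)^2}{4}\brak{v}^{k-7}|v|^2$, and the identity $|v|^2 = \brak{v}^2 - 1$, the lower-order terms collapse into a linear combination of $\int \brak{v}^{k-5}f^2 \dd v$ and $\int \brak{v}^{k-7}f^2 \dd v$ with coefficients $-\tfrac{(k-3)(k-1)}{4}$ and $\tfrac{(k-3)(k-7)}{4}$ respectively. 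The first has exactly the correct (negative) sign to move to the left-hand side with a positive constant, while the $\brak{v}^{k-7}$ term is either of favourable sign (when $3 \le k \le 7$) or absorbed into $\int \brak{v}^{k-5}f^2 \dd v$ via $\brak{v}^{k-7} \le \brak{v}^{k-5}$ (when $k \ge 7$). The degenerate case $k=3$ is handled separately by combining the plain Sobolev embedding with the classical Hardy inequality $\int |v|^{-2}f^2 \dd v \le 4\int|\nabla f|^2 \dd v$ together with $\brak{v}^{-2} \le |v|^{-2}$.

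For \eqref{eq:interpolation-1}, I would first apply \eqref{eq:sobolev} with $f$ replaced by $f^{p/2}$, yielding
\[\left(\int \brak{v}^{3k-9} f^{3p} \dd v\right)^{1/3} \le C(k)\int \brak{v}^{k-3}\abs{\nabla f^{p/2}}^2 \dd v.\]
I would then interpolate $\int \brak{v}^k f^q \dd v$ between $\int \brak{v}^m f^p \dd v$ and $\int \brak{v}^{3k-9} f^{3p} \dd v$ by H\"older's inequality. The exponents $\alpha = \tfrac{3p-q}{2p}$ and $1-\alpha = \tfrac{q-p}{2p}$ (which lie in $(0,1)$ precisely because $p < q < 3p$) are forced by matching the total power of $f$, and the requirement that the resulting power of $\brak{v}$ equals $k$ pins down $m$ uniquely. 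A short algebraic computation shows that the resulting value is $m = (2kp - (k-3)(3q-3p))/(3p-q)$, exactly as stated. Substituting the bound obtained from \eqref{eq:sobolev} then completes the estimate.

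The main technical obstacle is the sign bookkeeping in \eqref{eq:sobolev}: one must carefully verify that after integration by parts the coefficient of $\int \brak{v}^{k-5} f^2 \dd v$ really comes out with the right sign for every $k \ge 3$, and that the residual $\brak{v}^{k-7}$ contribution can be absorbed in every regime. The borderline values $k = 3$ and $k = 7$, where one of the key coefficients vanishes, require separate arguments as indicated; once these are dispatched, the interpolation estimate \eqref{eq:interpolation-1} is essentially algebraic.
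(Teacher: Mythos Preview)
Your proposal is correct and follows essentially the same route as the paper: apply the Sobolev embedding to $g=\brak{v}^{(k-3)/2}f$, expand $|\nabla g|^2$, integrate the cross term by parts, and track the sign of the lower-order coefficient; then obtain \eqref{eq:interpolation-1} by H\"older interpolation between $\int\brak{v}^m f^p$ and $\int\brak{v}^{3k-9}f^{3p}$ combined with \eqref{eq:sobolev} applied to $f^{p/2}$. Your sign bookkeeping (splitting off the $\brak{v}^{k-7}$ contribution and handling $k\le 7$ versus $k\ge 7$ separately) is in fact more careful than the paper's, which bounds $|v|^2\le\brak{v}^2$ in one stroke and states the constant $-C_1(m^2+m)$ without distinguishing regimes; your remark that $k=7$ is ``borderline'' is a slight overstatement, since the $\brak{v}^{k-7}$ coefficient simply vanishes there and no separate argument is needed.
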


\section{Prior well-posedness results and proof of Theorem {\ref{thm:long-cor-GS-GL}}} \label{Sec:3}

First, we state a local well-posedness result for subcritical norms, that is for $p > 3/2$, without any velocity weights. The result is taken from \cite{GoldingLoher}. 
\begin{theorem}[\protect{From \cite[Theorem 1.1, Corollary 1.2, Theorem 1.3]{GoldingLoher}}]\label{thm:golding-loher}
Fix $p > \frac{3}{2}$ and let $m >\frac{9}{2}\frac{p-1}{p - \frac{3}{2}}$. For any given $M$, $H$ and $f_{in} \in L^1_m \cap L^p(\R^3)$ satisfying 
\begin{equation*}
        \norm{f_{in}}_{L^1_m} \le M \qquad \text{and} \qquad \int_{\R^3} f_{in} \abs{\log(f_{in})} \;dv \le H,
\end{equation*}
there exists $0 < T = T(p, m, M, H)$ such that equation \eqref{eq:landau} admits a unique smooth solution on $(0,T) \times \R^3$ such that $\lim_{t \rightarrow 0^+} \norm{f(t) - f_{in}}_{L^p} = 0$.
This solution satisfies, for $t \in (0, T)$, the smoothing estimate
\begin{equation}\label{eq:inst_reg}
\|f(t)\|_{L^\infty} \le C(p,m,M,H)\left(1 + t^{-\beta^*}\right),
\end{equation}
where $\beta^* = \beta^*(p,m) \in (0,1)$ is explicitly computable.
The maximal time $T^*$ for which $f$ can be uniquely continued as a smooth solution is characterized by
        $\lim_{t \nearrow T^*} \norm{f(t)}_{L^p(\R^3)} = +\infty.$
\end{theorem}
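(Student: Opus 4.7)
The plan is to prove this via a fixed-point construction, $L^p$ energy estimates, a De Giorgi iteration for smoothing, and standard bootstrapping for higher regularity. First I would construct approximate solutions by regularizing the initial datum $f_{in}^\eps$ and solving the linear parabolic problem with frozen coefficients $A[\tilde f],\ \nabla a[\tilde f]$ via a Schauder/fixed-point argument, noting that for the nondegeneracy needed to run this scheme one may add a small viscosity $\eps\Delta$ and remove it at the end.

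The central a priori estimate is the $L^p$ energy identity. Testing \eqref{eq:landau} against $f^{p-1}$, integrating by parts, and using $-\Delta a[f] = f$, I would derive
\begin{equation*}
\frac{d}{dt}\int_{\R^3} f^p\,dv = -\frac{4(p-1)}{p}\int_{\R^3} A[f]\,\abs{\nabla f^{p/2}}^2\,dv + (p-1)\int_{\R^3} f^{p+1}\,dv.
\end{equation*}
Combining the lower bound $A[f]\ge c_0\brak{v}^{-3}I$ from Lemma \ref{lem:ellipticity} (valid since $L^p\hookrightarrow L\log L$ by Lemma \ref{lem:entropy-bound}) with the weighted Sobolev/interpolation estimate \eqref{eq:interpolation-1} applied to $g=f^{p/2}$ with $q = 2(p+1)/p$, one can bound
\begin{equation*}
\int_{\R^3} f^{p+1}\,dv \le \delta \int_{\R^3} \brak{v}^{-3}\abs{\nabla f^{p/2}}^2\,dv + C(\delta)\bigl(\norm{f}_{L^1_m}\bigr)^{\alpha}\bigl(\norm{f}_{L^p}^p\bigr)^{\gamma},
\end{equation*}
where the exponent $\gamma>1$ is strictly finite precisely because $p>3/2$, and the weight requirement $m>\frac{9}{2}\frac{p-1}{p-3/2}$ is exactly what makes the interpolation admissible. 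Since Lemma \ref{lem:moments} propagates $\norm{f}_{L^1_m}$ linearly in time, absorbing the dissipation yields an ODE inequality of the form $y' \lesssim (1+t)^{\alpha} y^\gamma$ for $y(t)=\norm{f(t)}_{L^p}^p$, which can be solved on a short interval $[0,T]$ with $T=T(p,m,M,H)$.

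Next I would obtain the smoothing estimate \eqref{eq:inst_reg}. The natural approach is a De Giorgi/Moser iteration on truncations $f_k=(f-k)_+$: the analogue of the above identity for $f_k$ gives a weighted energy--dissipation estimate, and iterating on a geometric sequence of levels $k_n=K(1-2^{-n})$ over shrinking time intervals $[t_n,T]$ produces a bound $\norm{f(t)}_{L^\infty}\le C(1+t^{-\beta^*})$ for some explicit $\beta^*\in(0,1)$ depending on $p$ and the scaling of the Sobolev embedding. Once $f$ is bounded, the coefficients $A[f]$ and $\nabla a[f]$ are bounded by Lemma \ref{lem:coefficient_bounds}, so \eqref{eq:landau} becomes uniformly parabolic with bounded measurable coefficients and Schauder/De Giorgi--Nash--Moser theory, together with a standard bootstrap on derivatives, gives $C^\infty$ smoothness away from $t=0$. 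Uniqueness for this smooth solution then follows from Fournier's Theorem \ref{thm:Fournier}, since $\beta^*<1$ guarantees $f\in L^1(0,T;L^\infty)$. Finally, the blow-up characterization follows because all short-time constants above depend on $f_{in}$ only through $M$, $H$, and $\norm{f_{in}}_{L^p}$, so as long as $\norm{f(t)}_{L^p}$ stays finite the solution can be restarted.

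The main obstacle is closing the $L^p$ estimate: the nonlinear reaction $\int f^{p+1}$ is exactly scaling-critical at $p=3/2$, so the interpolation used to absorb it into the weighted dissipation $\int \brak{v}^{-3}|\nabla f^{p/2}|^2$ only yields a superlinear—but integrable in time—right-hand side when $p$ is strictly larger than $3/2$; keeping track of the admissible weight $m$ and verifying that the exponents in \eqref{eq:interpolation-1} lie in the valid range is the delicate bookkeeping step. A secondary subtlety is ensuring the De Giorgi iteration for truncations sees the same weighted ellipticity as the full equation, which requires carefully propagating weights through the level-set energy inequality.
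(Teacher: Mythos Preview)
The paper does not prove this statement. Theorem~\ref{thm:golding-loher} is explicitly cited from \cite{GoldingLoher} (see the attribution in the theorem header and the sentence preceding it: ``The result is taken from \cite{GoldingLoher}''), and is used as a black box in the proof of Theorem~\ref{thm:long-cor-GS-GL}. There is therefore no proof in the paper to compare against.

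That said, your outline is broadly the right strategy and is in the spirit of the original \cite{GoldingLoher} argument: test with $f^{p-1}$, use the coercivity $A[f]\ge c_0\brak{v}^{-3}I$, interpolate the reaction $\int f^{p+1}$ against the degenerate dissipation $\int \brak{v}^{-3}|\nabla f^{p/2}|^2$ using the $L^1_m$ moments, and then run a De Giorgi iteration for the $L^\infty$ smoothing. One caution: Lemma~\ref{lem:sobolev} as stated in this paper only covers weights $k\ge 3$ on the gradient side (i.e.\ nonnegative weight $k-3$), so it does not directly furnish the interpolation you describe with the \emph{negative} weight $\brak{v}^{-3}$ on the dissipation; the actual inequality needed is a more delicate weighted interpolation in which the $L^1_m$ moment with $m>\frac{9}{2}\frac{p-1}{p-3/2}$ compensates for the degeneracy at infinity, and this is where the precise threshold on $m$ enters. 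You correctly flag this as the delicate bookkeeping step, but you should be aware that invoking \eqref{eq:interpolation-1} verbatim will not work.

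Note also that the present paper reproves a \emph{weighted} variant of this result in Section~\ref{sec:short_time2} (Proposition~\ref{prop:subcritical_degiorgi} and Lemma~\ref{lem:subcritical-propagation}), working in $L^p_m$ with $m\ge 9/2$ rather than $L^p\cap L^1_m$; the argument there is closer to your sketch and avoids the negative-weight issue by building the weight directly into the energy.
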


Next, we state a very recent result by Guillen-Silvestre \cite{GuillenSilvestre} on the global well-posedness for $C^1$ initial datum with sufficient decay.  The proof is based on showing that the Fisher information $i(f)$ is monotone decreasing as a function of time: 
\begin{theorem}[\protect{From \cite[Theorem 1.2]{GuillenSilvestre}}]\label{thm:guillen-silvestre}
Let $f_{0} :\R^3 \to [0, \infty)$ be an initial datum that is $C^1$ and satisfies $f_0(v) \leq C_0 \exp\left({-\beta \abs{v}^2}\right)$ for some positive parameters $C_0$ and $\beta$. Assume also that $f_0$ has finite Fisher information. 
Then there is a unique $C^1$ solution $f: [0, \infty) \times \R^3 \to [0, \infty)$ to the Landau equation \eqref{eq:landau} with initial datum $f(0, v) = f_0(v)$. For any positive time, this function f is strictly positive, in the Schwartz space, and bounded above by a Maxwellian. The Fisher information $i(f)$ is non-increasing.
\end{theorem}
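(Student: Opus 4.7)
The plan is to prove the theorem by first establishing the key novel ingredient --- the monotonicity of the Fisher information $i(f)$ along smooth solutions --- and then leveraging the bounded-coefficient parabolic theory compiled in Section \ref{sec:preliminaries}. The monotonicity is the analytic heart of the theorem; global existence, smoothness, and Maxwellian decay then follow from standard tools.

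\emph{Step 1 (local solution).} For $C^1$ initial data with Gaussian tails and finite Fisher information, I would produce a local-in-time classical solution $f\in C^1([0,T]\times \R^3)$ that is Schwartz for positive times and bounded above by a Maxwellian on $[0,T]$. This can be obtained via Theorem \ref{thm:golding-loher} to get a smooth local solution in $L^p$ for $p>3/2$, coupled with a barrier-function comparison argument of the form $f(t,v)\le C(t)e^{-\beta(t)|v|^2}$ --- where $C(t),\beta(t)$ solve a simple ODE obtained by plugging the ansatz into \eqref{eq:landau} with frozen coefficients --- to propagate the Maxwellian upper bound on a short interval. Along this smooth solution, $t\mapsto i(f(t))$ is differentiable and all computations below are rigorous.

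\emph{Step 2 (Fisher monotonicity --- main obstacle).} I would compute $\tfrac{d}{dt}i(f)$ by differentiating under the integral, substituting \eqref{eq:landau}, and integrating by parts. Writing out the convolution structure of $A[f]$ one is led, after several integrations by parts, to a double integral of the schematic form
\[
\tfrac{d}{dt} i(f) \;=\; -\int_{\R^3\times\R^3}\tfrac{1}{|v-w|}\,\Pi(v-w)\!:\!M\bigl(\nabla\log f(v),\nabla\log f(w),\nabla^2\log f(v),\nabla^2\log f(w)\bigr)\,f(v)f(w)\dd v\dd w,
\]
where $M$ is a symmetric tensor quadratic in its arguments. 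The entire game is to show that the integrand is pointwise non-positive. The main obstacle, and the Guillen--Silvestre breakthrough, is a purely pointwise linear-algebraic identity that rewrites $M$ as a non-negative sum of squares once projected against $\Pi(v-w)$ --- exploiting the specific three-dimensional Coulomb structure $\Pi(v)/(8\pi|v|) = -\tfrac{1}{8\pi}\nabla^2|v|$ and a careful change of variables symmetrizing in $(v,w)$. I would devote the bulk of the proof to verifying this identity by explicit matrix computation in an orthonormal basis with first vector along $v-w$.

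\emph{Step 3 (from $i(f)$ to regularity).} Once $i(f(t))\le i(f_0)$ for all $t$, Sobolev embedding gives $\|f(t)\|_{L^3}\lesssim \|\sqrt{f(t)}\|_{\dot H^1}^2 + \|f(t)\|_{L^1}$ uniformly in $t$. Lemma \ref{lem:coefficient_bounds} then gives $\|A[f](t)\|_{L^\infty}+\|\nabla a[f](t)\|_{L^\infty}\lesssim 1$, while Lemma \ref{lem:ellipticity} gives $A[f]\ge c_0\brak{v}^{-3}I$. Thus \eqref{eq:landau} is uniformly parabolic with bounded measurable coefficients on any compact set. A De Giorgi/Nash/Moser argument upgrades $f$ to $L^\infty_{\rm loc}$ and then Hölder continuous; Schauder estimates bootstrap to $C^\infty$; and a Maxwellian barrier argument as in Step 1, now on arbitrary time intervals, propagates the Maxwellian upper bound globally. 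Standard moment and derivative propagation then places $f(t,\cdot)$ in $\mathcal{S}(\R^3)$ for every $t>0$, and strict positivity follows from a Harnack inequality for the linear equation with frozen coefficients.

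\emph{Step 4 (uniqueness and continuation).} Two $C^1$ solutions with the stated decay both belong to $L^\infty(0,T;L^1_2)\cap L^1(0,T;L^\infty)$ by Steps 1 and 3, so Theorem \ref{thm:Fournier} gives uniqueness on $[0,T]$. Since the regularization estimates of Step 3 are uniform in time, the local solution extends to $[0,\infty)$ by continuation. The only genuinely hard step is Step 2: the expression for $\tfrac{d}{dt}i(f)$ contains many indefinite terms and its non-positivity is invisible to soft entropy-dissipation or log-Sobolev arguments, so the whole theorem rests on discovering and verifying the correct pointwise identity.
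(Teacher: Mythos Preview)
This theorem is not proved in the paper; it is quoted as an external result from Guillen--Silvestre \cite{GuillenSilvestre} and used as a black box (notably in the proof of Theorem \ref{thm:long-cor-GS-GL}). There is therefore no ``paper's own proof'' to compare against.

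Your outline is a reasonable high-level reconstruction of the Guillen--Silvestre strategy, and Steps 1, 3, and 4 are correct and standard --- indeed, the present paper carries out essentially those steps when it deduces Theorem \ref{thm:long-cor-GS-GL} from the cited result. You also correctly isolate Step 2 as the entire content of the theorem. However, your description of Step 2 is too schematic to count as a proof and may not match the actual mechanism: the phrase ``explicit matrix computation in an orthonormal basis with first vector along $v-w$'' describes what many people tried before Guillen--Silvestre, and it does not obviously succeed --- the indefinite cross-terms in $M$ do not cancel by direct inspection. The Guillen--Silvestre argument instead passes through a lifting of the problem (relating the Landau Fisher dissipation to a two-particle/Boltzmann-type structure) that reveals the needed positivity; this additional structural idea is precisely what had been missing. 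So while your plan is sound as an outline, the genuine obstacle in Step 2 is not resolved by the sketch you give, and you should expect that a naive coordinate computation will not close.
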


The proof of Theorem \ref{thm:long-cor-GS-GL} relies on approximating rough slowly, decaying initial data by initial data satisfying the hypotheses of Theorem \ref{thm:guillen-silvestre}. However, because the Fisher information is a nonlinear quantity, we will need to use a particular approximation. The following lemma will be used to construct the approximation:
\begin{lemma}\label{lem:cutoff}
    For any $R > 0$, there is a radial, monotone decreasing $\eta \in C^\infty_c(\R^3)$ that satisfies:
    \begin{equation*}
        0 \le \eta \le 1, \quad \eta(x) = \begin{cases}
            1 & \text{if } x \in B_R(0)\\
            0 & \text{if } x \notin B_{2R}(0),
        \end{cases}
        \quad \abs{\nabla \eta} \le \frac{C\sqrt{\eta}}{R}, \quad \text{and} \quad \abs{\nabla \eta} \le \frac{C\sqrt{1-\eta}}{R},
    \end{equation*}
    where $C$ is a universal constant.
\end{lemma}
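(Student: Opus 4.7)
The plan is to reduce the construction to a one-dimensional profile and then radialize. Specifically, I would first build a smooth, monotone decreasing profile $\psi:\R \to [0,1]$ with $\psi\equiv 1$ on $(-\infty, 1]$, $\psi\equiv 0$ on $[2,\infty)$, and satisfying the one-dimensional analogues $|\psi'(r)|^2 \le C\psi(r)$ and $|\psi'(r)|^2 \le C(1-\psi(r))$ for all $r \in \R$. Then setting
\begin{equation*}
\eta(x) := \psi(|x|/R)
\end{equation*}
gives the desired cutoff: the support conditions, radial symmetry, monotonicity, and the range $0\le \eta\le 1$ are inherited from $\psi$; smoothness at the origin is automatic because $\psi$ is identically $1$ there; and since $|\nabla \eta(x)| = R^{-1}|\psi'(|x|/R)|$, the two gradient bounds for $\eta$ reduce immediately to the corresponding bounds for $\psi$, with $C$ absorbed.

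To construct $\psi$, I would use the standard mollifier-style formula
\begin{equation*}
\psi(r) := \frac{\rho(2-r)}{\rho(2-r)+\rho(r-1)}, \qquad \rho(s) := e^{-1/s} \text{ for } s > 0, \quad \rho(s) := 0 \text{ for } s \le 0.
\end{equation*}
Since $\rho$ is smooth on $\R$ and vanishes to infinite order at $s=0$, the function $\psi$ is smooth everywhere, equals $1$ on $(-\infty,1]$ (because then $\rho(r-1)=0$), equals $0$ on $[2,\infty)$ (because then $\rho(2-r)=0$), and a direct computation using $\rho'(s)=\rho(s)/s^2 > 0$ on $(0,\infty)$ shows $\psi'(r) < 0$ throughout $(1,2)$.

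The only remaining point is verifying the two differential bounds. On any compact subinterval of $(1,2)$, both $\psi$ and $1-\psi$ are bounded below by a positive constant and $|\psi'|$ is bounded above, so the inequalities hold automatically with some constant. Near $r = 1^+$, a direct estimate gives $1-\psi(r) \asymp e^{-1/(r-1)}$ and $|\psi'(r)| \lesssim e^{-1/(r-1)}/(r-1)^2$, so $|\psi'|^2/(1-\psi) \to 0$; symmetrically near $r = 2^-$ one finds $\psi(r) \asymp e^{-1/(2-r)}$ and $|\psi'|^2/\psi \to 0$. Outside $(1,2)$, both sides of each inequality vanish and the bounds are trivial. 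The only (minor) conceptual obstacle is ensuring both $\sqrt{\eta}$- and $\sqrt{1-\eta}$-type bounds simultaneously: a standard smooth cutoff $\phi$ with linear transition satisfies neither, since $|\nabla \phi|^2/\phi$ blows up where $\phi$ approaches $0$ linearly, and the quick fix $\eta=\phi^2$ gives the $\sqrt{\eta}$ bound but not the $\sqrt{1-\eta}$ bound. Arranging flat-to-infinite-order vanishing of both $\psi$ and $1-\psi$ at the two endpoints, as in the formula above, bypasses this issue.
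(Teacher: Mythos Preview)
Your proof is correct and takes essentially the same approach as the paper: both build the cutoff from the standard $e^{-1/s}$-based transition function, whose infinite-order flatness at the two endpoints is exactly what delivers both differential inequalities. The only cosmetic difference is that the paper squares the transition (so the $\sqrt{\eta}$ bound falls out immediately from the chain rule, while the $\sqrt{1-\eta}$ bound is checked by the explicit computation you also carry out), whereas you use the transition directly and exploit the $r \leftrightarrow 3-r$ symmetry to handle the two bounds in parallel.
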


The proof is based on the standard construction of a smooth partition of unity in differential geometry. For completeness, we provide a proof in Appendix \ref{appendix2}. We are now ready to prove an analogue of Theorem \ref{thm:guillen-silvestre} for polynomially decaying initial data: 

\begin{proof}[Proof of Theorem \ref{thm:long-cor-GS-GL}]
    We prove the theorem first for $g_{in} \in C^\infty$. For $R > 0$, fix $\eta_R$ as in Lemma \ref{lem:cutoff} and approximate $g_{in}$ by
    \begin{equation*}
        g_{in, R} := g_{in}(v)\eta_R(v) + (1-\eta_R(v))e^{-|v|^2}.
    \end{equation*}
    Note that $g_{in,R} \in C^\infty$, $g_{in,R}$ has Gaussian tails, and $g_{in, R} \to g_{in}$ pointwise as $R \to \infty$. Also,
    \begin{equation}
        \int_{\R^3} \brak{v}^m \abs{g_{in, R} - g_{in}} \dd v \le \int_{\R^3 \setminus B_{R}(0)} \brak{v}^m\left(g_{in}(v) + e^{-|v|^2}\right) \dd v,
    \end{equation}
    which converges to $0$ since $g_{in} \in L^1_m$. A similar argument gives $g_{in, R} \to g_{in}$ in $L^3$ as a consequence of the finite Fisher information and Sobolev's embedding.
    We show next that the Fisher information converges.
    For $|v| \le R$, we have $g_{in, R} = g_{in}$. On the other hand, for $|v| \ge 2R$, $g_{in,R} = e^{-|v|^2}$, so we have
    \begin{equation*}
        \int_{\R^3 \setminus B_{2R}} \frac{\abs{\nabla g_{in, R}}^2}{g_{in, R}} \dd v \le 4 \int_{\R^3 \setminus B_{R}} |v|^2e^{-|v|^2} \dd v,
    \end{equation*}
    which evidently converges to $0$ as $R \to \infty$. It remains to bound terms on the intermediate annulus $A_R$ given by $R \le |v| \le 2R$. We start by computing
    \begin{equation*}
        \nabla g_{in, R} = \nabla g_{in}(v)\eta_R(v) + g_{in}(v)\nabla \eta_R(v) + \nabla\left[(1-\eta_R(v))e^{-|v|^2}\right]
    \end{equation*}
    and so by Cauchy Schwarz inequality,
    \begin{equation*}
    \begin{aligned}
        \int_{A_R} &\frac{\abs{\nabla g_{in, R}}^2}{g_{in, R}} \dd v \\
        &\le \int_{A_R} \frac{\abs{\nabla g_{in}(v)}^2\eta_R(v)^2 +  g_{in}(v)^2\abs{\nabla\eta_R(v)}^2 + \abs{\nabla\left[(1-\eta_R(v))e^{-|v|^2}\right]}^2}{g_{in, R}} \dd v \\
        &=: I_1 + I_2 + I_3.
        \end{aligned}
    \end{equation*}
    The first term we control as
    \begin{equation*}
        I_1 \le \int_{A_R} \frac{\abs{\nabla g_{in}(v)}^2\eta_R(v)^2}{g_{in}\eta_R(v)} \dd v \le \int_{A_R} \frac{\abs{\nabla g_{in}(v)}^2}{g_{in}} \dd v.
    \end{equation*}
    Because $i(g_{in}) < \infty$, the Lebesgue dominated convergence theorem implies $I_1$ converges to $0$ as $R \to \infty$.
    We control the second term using the specific form of $\eta_R$:
    \begin{equation*}
        I_2 \leq \int_{A_R} \frac{g_{in}(v)^2\abs{\nabla\eta_R(v)}^2}{g_{in}\eta_R(v)} \dd v \le CR^{-2} \int_{A_R} g_{in}(v) \dd v \leq CR^{-2},
    \end{equation*}
    which converges to $0$ as $R \to \infty$.
    We control the third term similarly:
     \begin{equation*}
     \begin{aligned}
        I_3 &\le \int_{A_R} \frac{\abs{\nabla\left[(1-\eta_R(v))\exp(-|v|^2)\right]}^2}{(1-\eta_R(v))\exp(-|v|^2)} \dd v \\
        &\le 4\int_{A_R} \abs{v}^2 e^{-\abs{v}^2} \dd v + \int_{A_R} \exp(-|v|^2)\frac{\abs{\nabla\eta_R(v)}^2}{1-\eta_R(v)} \dd v.
        \end{aligned}
    \end{equation*}
    The first term converges to $0$ again as $R \to \infty$, since the exponential dominates any polynomial. The second term also converges to $0$ as $R\to \infty$, where we use the specific form of $1 - \eta_R$ to conclude.
    Thus, we we have shown
    \begin{equation*}
        \lim_{R\to \infty} i(g_{in,R}) = i(g_{in}).
    \end{equation*}

    We now use Theorem \ref{thm:guillen-silvestre} to deduce that for any $R > 1$, there exists $g_R : [0, \infty) \times \R^3 \to [0, \infty)$ solving \eqref{eq:landau} with initial datum $g_{in,R}$, and whose Fisher information is monotonically decreasing. The Sobolev embedding $\dot{H}^1(\R^3) \embeds L^6(\R^3)$ implies that $\norm{g_R(t)}_{L^3} \le C i(g_{in,R})$ for each $t \ge 0$ and $R > 1$. Using the propagation of $L^1$ moments in Lemma \ref{lem:moments}, we apply Theorem \ref{thm:golding-loher} with initial data $g_{R}(t)$ for any $t \in [0,T]$. By uniqueness of $L^1_{loc}(\R^+;L^\infty)$ weak solutions, we conclude that $g_R$ satisfies the smoothing estimate \eqref{eq:inst_reg}: for any $T > 0$ and any $t \in (0,T)$,
    \begin{equation*}
       \norm{g_R(t)}_{L^\infty} \le C\left(\sup_{0<t<T} \norm{g_{R}(t)}_{L^3}, \sup_{0<t<T}\norm{g_{R}(t)}_{L^1_m},m\right)\left(1 + t^{-\beta^*}\right),
    \end{equation*}
    for some explicit $\beta^* \in (0,1)$. Therefore, there is an $g:[0,\infty) \times \R^3 \to [0,\infty)$ such that as $R \to \infty$, $g_R \to g$ weak starly in $L^\infty(0,T;L^3)$, and weakly in $L^1_{loc}(\R^+;L^\infty)$. By well-known higher regularity estimates (used in more detail below as Lemma \ref{lem:higher-regularity}), $g_R \to g$ in $C^k(\Omega)$ for any $k \in \N$ and any $\Omega \subset (0,\infty) \times \R^3$ compact. This is certainly sufficient to conclude $g$ is a smooth solution to \eqref{eq:landau} with initial data $g_{in}$.

    Further, by the Banach-Alaoglu theorem and monotonicity of the $i(g_R(t))$, $\sqrt{g} \in L^\infty(\R^+;\dot{H}^1)$ and 
    \begin{equation*}
        \sup_{t\in \R^+} i(g(t)) = \sup_{t\in \R^+} \norm{\sqrt{g}}_{\dot{H}^1}^2 \le \liminf_{R\to \infty} \sup_{t\in \R^+} \norm{\sqrt{g_R(t)}}_{\dot{H}^1}^2 \le \liminf_{R\to \infty} i(g_{in,R}) = i(g_{in}).
    \end{equation*}
    To summarize, we have shown for any smooth $g_{in}$ with finite Fisher information, there is a unique global-in-time smooth solution $g$ to Landau with initial datum $g_{in}$. This smooth solution satisfies the bounds
    \begin{equation*}
        i(g(t)) \le i(g_{in}),
             \end{equation*}
             and
               \begin{equation*}
\sup_{0 < t < T} t^{\beta^*} \norm{g(t)}_{L^\infty} \le C(\norm{g_{in}}_{L^1_m},i(g_{in}),m,T) \quad\text{for each }T > 0.
    \end{equation*}
    Finally, using uniqueness, we can show that $g$ has decreasing Fisher information. In particular, fix $0 \le s \le t$ and apply this result to $g(s)$ to construct another solution $\tilde g$ with initial data  $\tilde g_{in} = g(s)$. By construction, $i(\tilde g(t)) \leq i(\tilde g_{in}) = i(g(s))$. By the uniqueness from Theorem \ref{thm:Fournier}, $\tilde g(\tau) = g(s + \tau)$ for all $\tau \ge 0$. Taking $\tau = t - s$, we conclude $i(g(t)) = i(\tilde g(t)) \leq i( g(s))$.

    This concludes the proof, if $g_{in}$ is qualitatively smooth. If $g_{in} \in W^{1,1}_{loc}$ is not smooth, then one can approximate $g_{in}$ by
    \begin{equation*}
        g_{in,\delta} = \left(\varphi_\delta \ast \sqrt{g_{in}(\cdot)}\right)^2,
    \end{equation*}
    where $\varphi_\delta$ is a standard mollifier. The seemingly strange choice of approximation is exactly to easily imply $i(g_{in,\delta}) \to i(g_{in})$ as $\delta \to 0^+$. Sending $\delta \to 0^+$ one then concludes in the same manner as when taking $R \to \infty$.
    \qedhere  
\end{proof}

\section{An \texorpdfstring{$\eps$}{epsilon}-regularity criterion for Landau-Coulomb}\label{sec:DeGiorgi}
In this section, we derive a smoothing estimate based on a De Giorgi argument for smooth functions. Variants of the De Giorgi method have been used previously in \cite{AlonsoBaglandsDesvillettesLods, AlonsoBaglandDesvillettesLods_ProdiSerrin, GoldingGualdaniLoher, GoldingLoher, GolseGualdaniImbertVasseur_PartialRegularity1, GolseImbertVasseur_PartialRegularity2} to obtain $L^\infty$ regularization estimates for homogeneous kinetic equations. The novelty of the method presented here is that we use an $L^{3/2}$-based control quantity and use crucially the translation invariance of the estimates in the level set parameter. 
The main result of this section is the following $\eps$-regularity criterion:
\begin{proposition}\label{prop:critical_degiorgi}
    Let $c_0 > 0$ arbitrary and let $f: [0,T]\times \R^3 \to \R^+$ a smooth, rapidly decaying solution to \eqref{eq:landau} satisfying \eqref{eq:normalisation} with $A[f] \ge c_0\brak{v}^{-3}I$. Then, there is a constant $\eps_0 > 0$, depending only on $c_0$, such that if $f$ satisfies
    \begin{equation*}
        \eps(f) := \sup_{0 < \tau < T} \int_{\R^3} \brak{v}^{9/2} (f-K)_+^{3/2} \dd v + \int_0^{T}\int_{\R^3} \brak{v}^{3/2}\abs{\nabla (f-K)_+^{3/4}}^2 \dd v < \eps_0,
    \end{equation*}
    for some threshold $K \ge 0$, then 
    \begin{equation*}
        f(\tau, v) \le C^*(K + 1) + C^*\eps^{2/3} t^{-1} \qquad \text{for each }(\tau,v) \in [t,T]\times \R^3,
    \end{equation*}
    where $C^* > 0$ depends only on $c_0$. 
\end{proposition}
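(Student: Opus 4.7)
The plan is to run a De~Giorgi iteration at the critical $L^{3/2}$ scale, with the two terms defining $\eps(f)$ playing the roles of \emph{energy} and \emph{dissipation}. I would introduce level sets $K_n = K + M(1 - 2^{-n})$ and time slices $T_n = t(1 - 2^{-n})$, where $M = C^*(K+1) + C^*\eps^{2/3} t^{-1}$ is the target bound, and define $f_n := (f - K_n)_+$ together with the iterate
\[
U_n := \sup_{T_n < \tau < T} \int_{\R^3} \brak{v}^{9/2} f_n^{3/2}(\tau, v) \dd v + \int_{T_n}^{T}\!\!\int_{\R^3} \brak{v}^{3/2} \abs{\nabla f_n^{3/4}}^2 \dd v \dd\tau,
\]
so that $U_0 \le \eps(f) < \eps_0$. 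The goal is to derive a recursion $U_{n+1} \le C\,b^n U_n^{1+\gamma}$ for some $\gamma > 0$, which via the standard nonlinear iteration lemma forces $U_n \to 0$ and hence $f \le K + M$ on $[t, T]\times \R^3$.

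The first concrete step is to produce an energy-dissipation inequality at each level. Testing \eqref{eq:landau} against $\brak{v}^{9/2} f_n^{1/2}$ and using $\nabla f_n^{3/4} = \tfrac{3}{4} f_n^{-1/4}\nabla f$ on $\{f > K_n\}$, the diffusion term yields a coercive contribution $\tfrac{8}{9}\int A[f]\nabla f_n^{3/4}\cdot \nabla f_n^{3/4}\brak{v}^{9/2}$, which by the ellipticity hypothesis $A[f] \ge c_0 \brak{v}^{-3} I$ dominates $c_0 \int \brak{v}^{3/2} \abs{\nabla f_n^{3/4}}^2$. The drift term I would rewrite as $-\tfrac{2}{3}\int \nabla a[f]\cdot \nabla f_n^{3/2}\,\brak{v}^{9/2}$ and integrate by parts; invoking $-\Delta a[f] = f$ then converts it into a pointwise reaction $\tfrac{2}{3}\int f\,f_n^{3/2}\,\brak{v}^{9/2}$, plus weight-derivative cross terms absorbable into the dissipation by Cauchy-Schwarz. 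Splitting $f = f_n + K_n$, the resulting inequality takes the schematic form
\[
\frac{d}{d\tau}\!\int\! \brak{v}^{9/2} f_n^{3/2} + c_0\!\int\! \brak{v}^{3/2}\abs{\nabla f_n^{3/4}}^2 \lesssim \!\int\! \brak{v}^{9/2} f_n^{5/2} + K_n\!\int\! \brak{v}^{9/2} f_n^{3/2} + K_n^2\!\int\! \brak{v}^{9/2} f_n^{1/2}.
\]

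The critical term I would bound by H\"older with exponents $(3/2,3)$, which distributes the weight as $\brak{v}^{(2/3)\cdot 9/2}\cdot \brak{v}^{(1/3)\cdot 9/2}$,
\[
\int\! \brak{v}^{9/2} f_n^{5/2} \le \Big(\!\int\! \brak{v}^{9/2} f_n^{3/2}\Big)^{2/3}\Big(\!\int\! \brak{v}^{9/2} f_n^{9/2}\Big)^{1/3},
\]
followed by Lemma~\ref{lem:sobolev} applied to $f_n^{3/4}$ with $k = 9/2$ to convert the second factor into a multiple of the coercive $\int \brak{v}^{3/2}\abs{\nabla f_n^{3/4}}^2$. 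Time-integrating over $[T_n, T]$, the $f_n^{5/2}$ contribution is bounded by $U_n^{2/3} \cdot U_n$, and since $U_n^{2/3}\le \eps_0^{2/3}$ is small by hypothesis, this term can be absorbed into the dissipation on the left-hand side. The lower-order $K_n$-terms I would handle by Tchebychev: one level up, $f_n \ge M\,2^{-n-1}$ on $\{f > K_{n+1}\} = \mathrm{supp}\,f_{n+1}$ lets me trade any lower power $f_{n+1}^a$ against $(2^n/M)^{5/2 - a} f_n^{5/2}$, again controlled by $U_n^{5/3}$ after H\"older-Sobolev. The passage from the energy at $T_n$ to the energy at $T_{n+1}$ uses the averaging trick---pick a starting time $s \in [T_n, T_{n+1}]$ where the initial energy lies below its time-average---contributing the $t^{-1}2^{cn}$ factor. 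Assembled, the recursion takes the form $U_{n+1} \le C\, 2^{cn}\big[M^{-1}K^2 t^{-1} + \eps^{2/3}\big]\,U_n^{1+\gamma}$, and choosing $M = C^*(K+1) + C^*\eps^{2/3}t^{-1}$ with $C^*$ large enough places $U_0$ below the De~Giorgi threshold and closes the iteration.

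The main obstacle is the criticality of $L^{3/2}$: as noted after \eqref{eq:semilinear_heat}, this norm sits exactly at the scaling threshold, so the reaction $f_n^{5/2}$ admits no a-priori smallness from integrability alone; absorption into the dissipation genuinely requires the $\eps_0$-smallness hypothesis, and the H\"older-Sobolev split above is tight---the weight $\brak{v}^{9/2}$ is exactly the value at which Lemma~\ref{lem:sobolev} with $k = 9/2$ produces a Sobolev endpoint matching the H\"older distribution of the weight, and any other choice would fail. A secondary difficulty is the nonlocal drift $\nabla a[f]$, which at the $L^{3/2}$ level fails to be in $L^\infty$ and so must be controlled by integration by parts via $-\Delta a[f] = f$ rather than pointwise; this is what produces the $\eps^{2/3}$ rate in the conclusion and distinguishes the criterion from its subcritical analogues.
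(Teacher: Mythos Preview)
Your overall architecture is correct---level-set energies $U_n$, a nonlinear recursion, and the De~Giorgi lemma---and several ingredients (the H\"older--Sobolev control of $\int\brak{v}^{9/2}f_n^{5/2}$, Chebyshev for the lower-order $K_n$-terms, time-averaging for the initial energy) match the paper. But there is a genuine gap in your energy inequality.

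When you test \eqref{eq:landau} by $\brak{v}^{9/2}f_n^{1/2}$, the weight commutator in the \emph{diffusion} part produces a cross term $\int f_n^{3/4}\,\nabla\brak{v}^{9/2}\cdot A[f]\nabla f_n^{3/4}$, which you do not mention. Cauchy--Schwarz (with $A[f]$ as inner product) absorbs half of it into the coercive term, but the residual is $\int \brak{v}^{1/2}f_n^{3/2}\,A[f]v\cdot v$, and this requires an upper bound on $A[f]$. At the critical $L^{3/2}$ scale $\norm{A[f]}_{L^\infty}$ is \emph{not} controlled---this is precisely the HLS borderline---so the cross terms are not ``absorbable by Cauchy--Schwarz'' as you claim. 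The same issue arises from the drift cross term $\int f_n^{3/2}\nabla\brak{v}^{9/2}\cdot\nabla a[f]$ once you write $\nabla a[f]=\nabla\cdot A[f]$ and integrate by parts. The paper's energy inequality (Lemma~\ref{lem:level_set_inequality}) therefore retains a term $C\norm{A[f]}_{L^\infty}\int\brak{v}^{9/2}f_\ell^{3/2}$, and the heart of their Step~3 is to handle it via the splitting $A[f]=A[f_\ell]+A[\min(f,\ell)]$: the bounded piece contributes $\ell^{1/3}$, while $\norm{A[f_\ell]}_{L^\infty}\lesssim\norm{f_\ell}_{L^2}$ is then pushed through Chebyshev and interpolation (see \eqref{eq:coefficient_bound}), producing an additional $\mathcal{A}_k^{1/2}/(\ell-k)^{3/4}$ term in the recursion. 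Without this splitting the iteration does not close, and your schematic energy inequality---which omits the $\norm{A[f]}_{L^\infty}$ term entirely---is incorrect as written.

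Two smaller points. First, your absorption of the $f_n^{5/2}$ contribution ``into the dissipation on the left-hand side'' conflates a pointwise-in-time ODE absorption with the already-integrated recursion; the paper instead keeps the full $E_n^{5/3}$ and uses $\eps_0$-smallness in the barrier step. Your idea (use $(\int\brak{v}^{9/2}f_{n+1}^{3/2})^{2/3}\le\eps_0^{2/3}$ before time-integrating) is a legitimate alternative, but it needs to be stated at the ODE level, not after integration. Second, your claim that choosing $M$ large ``places $U_0$ below the De~Giorgi threshold'' is backwards: $U_0=\eps(f)$ is fixed independently of $M$; choosing $M$ (the paper's $\eta$) shrinks the recursion \emph{coefficient}, which raises the threshold. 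The $\eps^{2/3}t^{-1}$ rate arises from balancing $\eta$ against the $(\ell-k)^{-1}(T_2-T_1)^{-1}$ term in the recursion, not from the drift handling.
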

We start by noting that for any smooth, rapidly decaying solution to \eqref{eq:landau}, Lemma \ref{lem:ellipticity} and Lemma \ref{lem:entropy-bound} imply that there is some $c_0 > 0$ such that 
\begin{equation*}
    A[f](v) \ge \frac{c_0}{\brak{v}^3} I.
\end{equation*}

\medskip

\begin{flushleft}
    {\bf \underline{Step 1: Introduction of Level Sets and Weighted Energy Functional}}
\end{flushleft}

For $f$ a solution to \eqref{eq:landau} and for any $\ell \in \R^+$, we denote the level set function
\begin{equation*}
    f_\ell(t,v) := (f(t,v) - \ell)_+ = \max(f(t,v) - \ell, 0).
\end{equation*}
We now define our main weighted energy functional associated to $f$: for $0\le T_1 < T_2 \le T$ and $\ell \in \R^+$,
\begin{equation*}
\begin{aligned}
    \mathcal{E}_\ell(T_1,T_2) :&= \sup_{T_1 < t < T_2} \int_{\R^3} \brak{v}^{9/2} f_\ell^{3/2} \dd v + \int_{T_1}^{T_2}\int_{\R^3} \brak{v}^{3/2} \abs{\nabla f_\ell^{3/4}}^2 \dd v.\\[7pt]
\end{aligned}
\end{equation*}
Our first main goal is showing the following nonlinear inequality: for any $\ell > k \ge 0$ and $0 \le T_1 < T_2 < T_3 \le T$,
\begin{equation}\label{eq:energy_inequality}
\begin{aligned}
    &\mathcal{E}_{\ell}(T_2,T_3) \\
    &\le C \left[1 + \frac{1}{(\ell - k)(T_2 - T_1)} + \frac{\mathcal{E}_k^{1/2}(T_1, T_3)}{(\ell - k)^{3/4}} + \frac{1 + \ell}{(\ell - k)} + \frac{\ell^2}{(\ell - k)^2}\right]\mathcal{E}_k^{5/3}(T_1,T_3),
\end{aligned}
\end{equation}
where $C$ depends on $c_0$. 
To prove \eqref{eq:energy_inequality}, we will show a slightly more precise bound. It will be convenient at times to work with each half of $\mathcal{E}$ separately, therefore we introduce the following auxiliary functionals:
\begin{equation}\label{eq:functionals}
\begin{aligned}
    \mathcal{A}_\ell(T_1,T_2) :&= \sup_{T_1 < t < T_2} \int_{\R^3} \brak{v}^{9/2} f_\ell^{3/2} \dd v\\[7pt]
    \mathcal{B}_\ell(T_1,T_2) :&= \int_{T_1}^{T_2}\int_{\R^3} \brak{v}^{3/2} \abs{\nabla f_\ell^{3/4}}^2 \dd v\dd t. \\[7pt]
\end{aligned}
\end{equation}

\begin{flushleft}
    {\bf \underline{Step 2: Differential Inequality}}
\end{flushleft}

We begin with the computation of the $L^p$ norm of the level set function $f_\ell$. This computation will be used frequently throughout the remainder of the paper. 

\begin{lemma}\label{lem:level_set_inequality}
    For $f:[0,T]\times \R^3 \to \R^+$ a solution to \eqref{eq:landau}, the following differential inequality holds for each $\ell \ge 0$, for each $1 < p < \infty$ and $m \ge 0$,
    \begin{equation}\label{eq:level_set_inequality}
    \begin{aligned}
        \frac{\dd}{\dd t}\int_{\R^3} \brak{v}^m f_\ell^p \dd v &+ \int_{\R^3}\brak{v}^{m-3}\abs{\nabla f_\ell^{p/2}}^2 \dd v \\
        &\le C \int_{\R^3} \brak{v}^m f_\ell^{p+1} \dd v + C(\norm{A[f]}_{L^\infty} + \ell)\int_{\R^3} \brak{v}^m f_\ell^p \dd v\\
            &\qquad + C\ell^2 \int_{\R^3} \brak{v}^m f_\ell^{p-1} \dd v,
    \end{aligned}
    \end{equation}
    where the constant $C$ depends only on $c_0$, $p$, and $m$.
\end{lemma}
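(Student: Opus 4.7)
The plan is to test the equation \eqref{eq:landau} against the function $p\brak{v}^m f_\ell^{p-1}$ and integrate by parts in $v$. Since $f_\ell^{p-1}$ is supported on $\{f > \ell\}$, on this set one has $f = f_\ell + \ell$ and $\nabla f = \nabla f_\ell$, which allows all resulting integrals to be rewritten in terms of $f_\ell^p$, $f_\ell^{p\pm 1}$, and $\nabla f_\ell^{p/2}$. The computation naturally produces four groups of contributions: a coercive diffusion term $-c(p)\int \brak{v}^m \nabla f_\ell^{p/2} \cdot A[f]\nabla f_\ell^{p/2}$, which by the ellipticity $A[f] \ge c_0\brak{v}^{-3}I$ dominates $c(p)c_0\int \brak{v}^{m-3}|\nabla f_\ell^{p/2}|^2$; a reaction contribution $\int \brak{v}^m f_\ell^{p-1} f^2$ which, after combining with the $\nabla a[f]\cdot\nabla f_\ell$ piece integrated by parts using $\Delta a[f] = -f$, expands via $f = f_\ell + \ell$ into precisely the three target terms $\int\brak{v}^m f_\ell^{p+1}$, $\ell\int \brak{v}^m f_\ell^p$, $\ell^2\int \brak{v}^m f_\ell^{p-1}$; a weight cross-term $\int f_\ell^{p/2}\nabla\brak{v}^m\cdot A[f]\nabla f_\ell^{p/2}$; and a drift cross-term $\int f_\ell^p \nabla\brak{v}^m\cdot\nabla a[f]$.

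The heart of the proof is showing that both cross-terms are absorbed by the coercive term at the cost of an additional $C\norm{A[f]}_{L^\infty}\int \brak{v}^m f_\ell^p$. For the weight cross-term I will apply matrix Cauchy--Schwarz together with the pointwise estimate $v \cdot A[f] v \le \tfrac{1}{2}\abs{v}^2 a[f]$ and the trace identity $a[f] = \mathrm{tr}(A[f]) \le 3\norm{A[f]}_{L^\infty}$ to obtain $\sqrt{\nabla\brak{v}^m\cdot A[f]\nabla\brak{v}^m} \le Cm\brak{v}^{m-1}\norm{A[f]}_{L^\infty}^{1/2}$, and then use the weight split $\brak{v}^{m-1} = \brak{v}^{(m-2)/2}\cdot\brak{v}^{m/2}$ in a weighted Cauchy--Schwarz followed by Young's inequality. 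This produces an $\varepsilon$ multiple of the coercive term (absorbable for small $\varepsilon$) plus $C_\varepsilon\norm{A[f]}_{L^\infty}\int \brak{v}^{m-2} f_\ell^p$, which is bounded by $C_\varepsilon\norm{A[f]}_{L^\infty}\int \brak{v}^m f_\ell^p$ since $\brak{v}^{m-2} \le \brak{v}^m$.

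For the drift cross-term the key will be the identity
\[
    v\cdot\nabla a[f] = \mathrm{div}(A[f]v) - a[f],
\]
which follows from the direct computation $\nabla\cdot A[f] = \nabla a[f]$ for the Coulomb kernel. Writing $\nabla\brak{v}^m = mv\brak{v}^{m-2}$ and substituting this identity splits the drift cross-term into a low-order piece $-m\int f_\ell^p\brak{v}^{m-2} a[f]$, bounded directly by $C\norm{A[f]}_{L^\infty}\int\brak{v}^m f_\ell^p$, plus a divergence term $m\int f_\ell^p\brak{v}^{m-2}\mathrm{div}(A[f]v)$. Integrating by parts in the divergence term and applying the same matrix Cauchy--Schwarz strategy as in the previous step (now with $A[f]v$ in place of $A[f]\nabla\brak{v}^m$, handled using $\abs{A[f]v} \le \brak{v}\norm{A[f]}_{L^\infty}$) handles it identically, yielding the same type of absorbable contributions.

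The principal obstacle will be precisely this drift cross-term: since $\norm{\nabla a[f]}_{L^\infty}$ is not controlled by $\norm{A[f]}_{L^\infty}$ alone (one typically needs $f\in L^q$ for some $q>3$, cf.\ Lemma \ref{lem:coefficient_bounds}), a naive pointwise bound on $|\nabla a[f]|$ would introduce an extra factor of $\brak{v}$, producing $\brak{v}^{m+1}f_\ell^p$ on the right-hand side and breaking the target inequality. The identity $\nabla\cdot A[f] = \nabla a[f]$ is what rescues the argument, by allowing the unbounded $\nabla a[f]$ to be traded via integration by parts for $A[f]$ and $a[f] = \mathrm{tr}(A[f])$, both of which are directly controlled in $L^\infty$.
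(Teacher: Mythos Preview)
Your proposal is correct and follows essentially the same approach as the paper: test against $\brak{v}^m f_\ell^{p-1}$, isolate the coercive piece, expand the reaction via $f=f_\ell+\ell$, and handle the drift cross-term by trading $\nabla a[f]$ for $A[f]$ through the identity $\nabla\cdot A[f]=\nabla a[f]$. The only cosmetic difference is that the paper writes $f_\ell^p\,\nabla\brak{v}^m\cdot\nabla a[f] = \nabla\brak{v}^m\cdot\bigl[\nabla\cdot(A[f]f_\ell^p)-A[f]\nabla f_\ell^p\bigr]$ and integrates by parts to land on $A[f]:\nabla^2\brak{v}^m$, whereas you use the equivalent form $v\cdot\nabla a[f]=\mathrm{div}(A[f]v)-a[f]$ and integrate by parts onto $\nabla(f_\ell^p\brak{v}^{m-2})$; both routes produce the same absorbable terms, and your refined pointwise bound $v\cdot A[f]v\le\tfrac12|v|^2 a[f]$ is not needed (the paper simply uses $A[f]\le\norm{A[f]}_{L^\infty}$).
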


Testing \eqref{eq:landau} by $\brak{v}^m f_\ell^{p-1}$, and using $\nabla f_\ell = \chi_{\set{f > \ell}}\nabla f$ and $\partial_t f_\ell = \chi_{\set{f > \ell}} \partial_t f$, we obtain
\begin{align*}
    \frac{1}{p}\frac{\dd }{\dd t}\int_{\R^3} \brak{v}^m f_\ell^p &+ \frac{4(p-1)}{p^2}\int_{\R^3} \brak{v}^m A[f]\nabla f_\ell^{p/2}\cdot \nabla f_\ell^{p/2} \dd v \\
    &\le -\int_{\R^3} f_\ell^{p-1} (\nabla \brak{v}^{m}) \cdot A[f]\nabla f_\ell \dd v\\
    &+ \int_{\R^3} \brak{v}^m \nabla f_\ell^{p-1} \cdot \nabla a[f]f \dd v\\
    &+ \int_{R^3} f_\ell^{p-1}\nabla(\brak{v}^m) \cdot \nabla a[f]f \dd v\\
    &=: I_1 + I_2 + I_3
\end{align*}
We bound $I_1$ using Young's inequality as
\begin{equation*}
\begin{aligned}
    I_1 = -\frac{2}{p}\int_{\R^3} f_\ell^{p/2}(\nabla\brak{v}^m) \cdot A[f]\nabla f_\ell^{p/2} \dd v &\le \frac{p-1}{2p^2} \int_{\R^3} \brak{v}^m A[f]\nabla f_\ell^{p/2} \cdot \nabla f_\ell^{p/2} \dd v\\
        &\qquad + C(p)\norm{A[f]}_{L^\infty}\int_{\R^3} \brak{v}^{m-2} f_\ell^p \dd v.
\end{aligned}
\end{equation*}
Next, by integrating by parts, we rewrite $I_2$, which contains the highest order terms:
\begin{equation}
\begin{aligned}\label{eq:I1}
    I_2 &= -I_3 + \int_{\R^3}\brak{v}^m f_\ell^{p-1}f^2 \dd v - \frac{1}{p}\int_{\R^3} \brak{v}^m \nabla a[f]\cdot \nabla f_\ell^p \dd v\\
        &= -I_3 + \int_{\R^3}\brak{v}^m f_\ell^{p+1} \dd v + 2\ell\int_{\R^3}\brak{v}^m f_\ell^p \dd v + \ell^2\int_{\R^3}\brak{v}^m f_\ell^{p-1} \dd v \\
        &\qquad-  \frac{1}{p} \int_{\R^3}\brak{v}^m f_\ell^p f \dd v +\frac{1}{p} \int_{\R^3} f_\ell^p (\nabla\brak{v}^m) \cdot \nabla a[f] \dd v \\
        &= -I_3 + \frac{p-1}{p}\int_{\R^3}\brak{v}^m f_\ell^{p+1} \dd v + \frac{(2p - 1)\ell}{p}\int_{R^3} \brak{v}^m f_\ell^{p} \dd v\\
        &\qquad + \ell^2 \int_{\R^3}\brak{v}^m f_\ell^{p-1} \dd v + \frac{1}{p}\int_{\R^3} (\nabla \brak{v}^m) \cdot \left[\nabla \cdot \left(A[f]f_\ell^p\right) - A[f]\nabla f_\ell^p\right] \dd v
\end{aligned}
\end{equation}
Therefore, we bound $I_2$ and $I_3$ together as
\begin{align*}
    I_2 + I_3 &\le C\int_{\R^3} \brak{v}^m f_\ell^{p+1} \dd v + C\ell \int_{\R^3} \brak{v}^m f_\ell^p \dd v + C\ell^2 \int_{\R^3} \brak{v}^m f_\ell^{p-1} \dd v\\
        &\qquad -\frac{1}{p}\int_{\R^3} f_\ell^p A[f]:(\nabla^2 \brak{v}^m) \dd v - \frac{2}{p}\int_{\R^3} (\nabla \brak{v}^m)f_\ell^{p/2} \cdot A[f]\nabla f_\ell^{p/2} \dd v\\
        &\le C\int_{\R^3} \brak{v}^m f_\ell^{p+1} \dd v + C(\norm{A[f]}_{L^\infty} + \ell)\int_{\R^3} \brak{v}^m f_\ell^p \dd v \\
        &\qquad+ C\ell^2 \int_{\R^3} \brak{v}^m f_\ell^{p-1} \dd v + \frac{p-1}{2p^2}\int_{\R^3} \brak{v}^{m}A[f]\nabla f_\ell^{p/2} \cdot \nabla f_\ell^{p/2} \dd v,
\end{align*}
where the last inequality follows from Young's, as in  \eqref{eq:I1}.
After reabsorbing terms and using the lower bound for $A[f]$, we find the desired inequality:
\begin{equation*}
\begin{aligned}
    \frac{\dd}{\dd t}\int_{\R^3} &\brak{v}^m f_\ell^p \dd v + \int_{\R^3}\brak{v}^{m-3}\abs{\nabla f_\ell^{p/2}}^2 \dd v \\
    &\le C\int_{\R^3} \brak{v}^m f_\ell^{p+1} \dd v + C(\norm{A[f]}_{L^\infty} + \ell) \int_{\R^3} \brak{v}^m f_\ell^p \dd v+ C\ell^2 \int_{\R^3} \brak{v}^m f_\ell^{p-1} \dd v.
\end{aligned}
\end{equation*}
This concludes the proof of Lemma \ref{lem:level_set_inequality}.

\begin{flushleft}
    \underline{{\bf Step 3: Gain of Integrability}}
\end{flushleft}

In this step, we use  Lemma \ref{lem:level_set_inequality} to obtain a slightly more precise form of the nonlinear inequality \eqref{eq:energy_inequality} relating the energy functionals defined in \eqref{eq:functionals}.

\begin{lemma}\label{lem:gain_of_integrability}
    Under the assumptions of Proposition \ref{prop:critical_degiorgi} and notation introduced in \eqref{eq:functionals} of Step 1, the energy functional $\mathcal{E}$ satisfies for each $0 \le T_1 < T_2 < T_3 \le T$, each $0 \le k < \ell$,
    \begin{equation*}
    \begin{aligned}
    	&\mathcal{E}_{\ell}(T_2,T_3) \\
	&\le C\left[1 + \frac{\mathcal{A}_k^{1/2}(T_1,T_3)}{(\ell - k)^{3/4}} + \frac{1}{(T_2 - T_1)(\ell - k)} + \frac{1 + \ell}{(\ell - k)} + \frac{\ell^2}{(\ell - k)^2}\right] \mathcal{A}_k^{2/3}(T_1,T_3)\mathcal{B}_k(T_1,T_3),
    \end{aligned}
    \end{equation*}
    where the constant $C$ depends only on $c_0$, the coercivity constant of $A[f]$.
\end{lemma}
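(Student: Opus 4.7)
My plan is to specialize the differential inequality from Lemma~\ref{lem:level_set_inequality} to $p=3/2$ and $m=9/2$, integrate in time from some $s\in[T_1,T_2]$ to $t\in[T_2,T_3]$, and bound each right-hand side term by $C\mathcal{A}_k^{2/3}\mathcal{B}_k$ times exactly one entry of the displayed bracket. For the initial data term $\int \brak{v}^{9/2}f_\ell^{3/2}(s)$, I will average in $s$ over $[T_1,T_2]$ and replace it with $\tfrac{1}{T_2-T_1}\int_{T_1}^{T_2}\int \brak{v}^{9/2}f_\ell^{3/2}\dd s$, then treat it by the same methods as the other terms.

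The three workhorse tools are: (i) the pointwise-in-$t$ interpolation
\[
\int \brak{v}^{9/2}f_\ell^{5/2}\dd v \le \Bigl(\int \brak{v}^{9/2}f_\ell^{3/2}\dd v\Bigr)^{2/3}\Bigl(\int \brak{v}^{9/2}f_\ell^{9/2}\dd v\Bigr)^{1/3};
\]
(ii) the weighted Sobolev bound $\bigl(\int \brak{v}^{9/2}g^{9/2}\bigr)^{1/3}\le C\int \brak{v}^{3/2}\abs{\nabla g^{3/4}}^2$, obtained by applying Lemma~\ref{lem:sobolev} (with its parameter equal to $9/2$) to $g^{3/4}$; and (iii) the Chebyshev-type pointwise bound $f_\ell^q\le f_k^{q+a}(\ell-k)^{-a}$ for $q\ge 0$ and $a\ge 0$, which holds on $\{f>\ell\}$ because $f_k\ge\ell-k$ there and $f_\ell\le f_k$. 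Using $f_\ell\le f_k$ in (i) and integrating in $t$ together with (ii) applied to $g=f_k$ yields the ``main'' bound $\int_{T_1}^{T_3}\int \brak{v}^{9/2}f_\ell^{5/2}\le C\mathcal{A}_k^{2/3}\mathcal{B}_k$, which is the ``$1$''-term. Applying (iii) with $a=1$ converts $\int \brak{v}^{9/2}f_\ell^{3/2}$ into $(\ell-k)^{-1}\int \brak{v}^{9/2}f_k^{5/2}$, and the latter is again bounded by $C\mathcal{A}_k^{2/3}\mathcal{B}_k$ as above; this takes care of the averaged initial data term (factor $\frac{1}{(T_2-T_1)(\ell-k)}$) and the $\ell$-term (factor $\frac{\ell}{\ell-k}$). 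The $\ell^2$-term is analogous with $a=2$, producing $\frac{\ell^2}{(\ell-k)^2}$.

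The genuinely delicate step is the coefficient contribution $\int_{T_1}^{T_3}\|A[f]\|_{L^\infty}\int \brak{v}^{9/2}f_\ell^{3/2}\dd t$, since at the critical exponent $p=3/2$ there is no uniform control on $\|A[f]\|_{L^\infty}$. I split $A[f]=A[\min(f,k)]+A[f_k]$. Lemma~\ref{lem:coefficient_bounds} with $p=1$, $q=\infty$, together with the mass normalization, gives $\|A[\min(f,k)]\|_{L^\infty}\lesssim k^{1/3}\le 1+\ell$, which is absorbed by $\frac{1+\ell}{\ell-k}\cdot \mathcal{A}_k^{2/3}\mathcal{B}_k$ via (iii) at $a=1$. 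For the singular piece, Lemma~\ref{lem:coefficient_bounds} with $q=3$ and $\|f_k\|_{L^1}\le 1$ yields $\|A[f_k]\|_{L^\infty}\lesssim \|f_k\|_{L^3}^{1/2}$; combined with the Cauchy--Schwarz interpolation $\int \brak{v}^{9/2}f_k^3\le \mathcal{A}_k^{1/2}\bigl(\int \brak{v}^{9/2}f_k^{9/2}\bigr)^{1/2}$, this gives the pointwise-in-$t$ bound $\|A[f_k]\|_{L^\infty}\lesssim \mathcal{A}_k^{1/12}\bigl(\int \brak{v}^{9/2}f_k^{9/2}\bigr)^{1/12}$. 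I then apply (iii) at the carefully chosen level $a=3/4$ together with the interpolation $\int \brak{v}^{9/2}f_k^{9/4}\le \mathcal{A}_k^{3/4}\bigl(\int \brak{v}^{9/2}f_k^{9/2}\bigr)^{1/4}$, so that the exponents on $\int \brak{v}^{9/2}f_k^{9/2}$ add pointwise to exactly $1/3$; integrating in $t$ and invoking (ii) produces the clean bound $C\mathcal{A}_k^{5/6}\mathcal{B}_k/(\ell-k)^{3/4}$. The elementary inequalities $\mathcal{A}_k^{1/6}\le 1+\mathcal{A}_k^{1/2}$ and $(\ell-k)^{-3/4}\le 1+(\ell-k)^{-1}$ then absorb this into the ``$1$'', $\frac{1+\ell}{\ell-k}$, and $\frac{\mathcal{A}_k^{1/2}}{(\ell-k)^{3/4}}$ entries of the bracket, which is where the characteristic exponents of the Lemma originate and is the main obstacle of the proof.

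To conclude, I take the supremum in $t\in[T_2,T_3]$ of the integrated inequality (which controls $\mathcal{A}_\ell(T_2,T_3)$), and invoke the ellipticity $A[f]\ge c_0\brak{v}^{-3}I$ on the left-hand side of Lemma~\ref{lem:level_set_inequality} to ensure that the retained diffusive piece already dominates $\mathcal{B}_\ell(T_2,T_3) = \int_{T_2}^{T_3}\int \brak{v}^{3/2}\abs{\nabla f_\ell^{3/4}}^2$. Summing the five controlled contributions produces the stated inequality for $\mathcal{E}_\ell=\mathcal{A}_\ell+\mathcal{B}_\ell$.
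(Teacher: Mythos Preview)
Your proposal is correct and follows essentially the same route as the paper: specialize Lemma~\ref{lem:level_set_inequality} to $p=3/2$, $m=9/2$, integrate in time with the sup/average trick, use Chebyshev to pass from $f_\ell$ to $f_k$, interpolate via Lemma~\ref{lem:sobolev}, and split $A[f]$ into a bounded truncation plus a tail. The only cosmetic difference is in the handling of the coefficient tail: the paper splits at level $\ell$, uses $q=2$ in Lemma~\ref{lem:coefficient_bounds} and the intermediate exponent $13/6$ to land directly on $\mathcal{A}_k^{7/6}\mathcal{B}_k/(\ell-k)^{3/4}$, whereas you split at level $k$, use $q=3$, and obtain the slightly sharper $\mathcal{A}_k^{5/6}\mathcal{B}_k/(\ell-k)^{3/4}$ before absorbing via $\mathcal{A}_k^{1/6}\le 1+\mathcal{A}_k^{1/2}$ and $(\ell-k)^{-3/4}\le 1+(\ell-k)^{-1}$.
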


We begin by integrating \eqref{eq:level_set_inequality} (with $m = 9/2$, $p = 3/2$) over $\tau \in [t_1,t_2] \subset[0,T]$, to obtain
\begin{equation*}
\begin{aligned}
    \int_{\R^3} \brak{v}^{9/2} f_\ell^{3/2}(t_2) \dd v &+ \int_{t_1}^{t_2}\int_{\R^3}\brak{v}^{3/2}\abs{\nabla f_\ell^{3/4}}^2 \dd v \dd \tau \\
    &\le \int_{\R^3} \brak{v}^{9/2} f_\ell^{3/2}(t_1) \dd v+C\int_{t_1}^{t_2}\int_{\R^3} \brak{v}^{9/2} f_\ell^{5/2} \dd v\dd\tau \\
    &+ C\ell\int_{t_1}^{t_2}\int_{\R^3} \brak{v}^{9/2} f_\ell^{3/2} \dd v + C\ell^2 \int_{t_1}^{t_2}\int_{\R^3} \brak{v}^{9/2}f_\ell^{1/2} \dd v\dd \tau \\
    &+ C\int_{t_1}^{t_2}\int_{\R^3} \brak{v}^{9/2}\norm{A[f(\tau)]}_{L^\infty} f_\ell^{3/2} \dd v\dd\tau .
\end{aligned}
\end{equation*}
Taking a supremum over $t_2 \in [T_2, T_3]$, then averaging over $t_1 \in [T_1,T_2]$, and recalling the definition of $\mathcal{E}_\ell$ in \eqref{eq:functionals}, we find
\begin{equation}\label{eq:intergral_level_set_inequality}
\begin{aligned}
    \mathcal{E}_\ell(T_2,T_3) &\le \frac{C}{T_2 - T_1}\int_{T_1}^{T_3}\int_{\R^3} \brak{v}^{9/2} f_\ell^{3/2} \dd v\dd\tau + C\int_{T_1}^{T_3}\int_{\R^3} \brak{v}^{9/2} f_\ell^{5/2} \dd v\dd\tau \\&\qquad +C\ell\int_{T_1}^{T_3}\int_{\R^3} \brak{v}^{9/2} f_\ell^{3/2} \dd v\dd \tau +
    	 C\ell^2 \int_{T_1}^{T_3}\int_{\R^3} \brak{v}^{9/2}f_\ell^{1/2} \dd v\dd \tau \\
      &\qquad + C\int_{T_1}^{T_3}\int_{\R^3} \brak{v}^{9/2}\norm{A[f(\tau)]}_{L^\infty} f_\ell^{3/2} \dd v\dd\tau.
\end{aligned}
\end{equation}
Now, we note a simple consequence of the definition of $f_\ell$: if $0 \le k < \ell$, then
\begin{equation*}
    \set{f_{\ell} > 0} = \set{f_{k} > \ell - k}.
\end{equation*}
Therefore, using Hölder's inequality, Chebychev's inequality for the measure $\dd\nu(v) = \brak{v}^{9/2} \dd v$, for any $0 < \alpha_1 < \alpha_2$, and the fact that $f_{\ell} \leq f_k$ for any $k \leq \ell$, 
\begin{equation}\label{eq:Chebychev}
\begin{aligned}
    \int_{\R^3} \brak{v}^{9/2} f^{\alpha_1}_{\ell} \dd v &\le \nu\left(\set{f_{\ell} > 0}\right)^{1-\frac{\alpha_1}{\alpha_2}}\left(\int_{\R^3} f^{\alpha_2}_{\ell} \dd\nu(v)\right)^{\frac{\alpha_1}{\alpha_2}}\\
        &\le \nu\left(\set{f_{k}^{\alpha_2} > (\ell - k)^{\alpha_2}}\right)^{1-\frac{\alpha_1}{\alpha_2}}\left(\int_{\R^3} f^{\alpha_2}_{\ell} \dd\nu(v)\right)^{\frac{\alpha_1}{\alpha_2}}\\
        &\le (\ell - k)^{\alpha_1 - \alpha_2} \left(\int_{\R^3} \brak{v}^{9/2} f^{\alpha_2}_{k} \dd v\right).
\end{aligned}
\end{equation}
Combining \eqref{eq:intergral_level_set_inequality} and \eqref{eq:Chebychev} (with $\alpha_1 \in \set{1/2,\,3/2}$ and $\alpha_2 = 5/2$ and integrated over $\tau \in [T_1,T_3]$), we have shown for $0 \le k < \ell$
\begin{equation}\label{eq:energy-aux}
\begin{aligned}
    &\mathcal{E}_{\ell}(T_2,T_3) \\
    &\le C\left[1 + \frac{1}{(T_2 - T_1)(\ell - k)} + \frac{\ell}{(\ell - k)} + \frac{\ell^2}{(\ell - k)^2}\right]\int_{T_1}^{T_3}\int_{\R^3} \brak{v}^{9/2} f_k^{5/2} \dd v\dd\tau\\
    &\qquad + C\int_{T_1}^{T_3}\int_{\R^3} \brak{v}^{9/2}\norm{A[f(\tau)]}_{L^\infty} f_{\ell}^{3/2} \dd v\dd\tau.
\end{aligned}
\end{equation}
We now bound the last term, using the identity $f = f_\ell + \min(f, \ell)$, the linearity of $A$, the coefficient bounds in Lemma \ref{lem:coefficient_bounds}, and the embedding $L^2_m\embeds L^1$ for $m > 1$, and \eqref{eq:Chebychev} (with $\alpha_1 \in \set{3/2,2}$ and $\alpha_2 \in \set{13/6,5/2}$):
\begin{equation}\label{eq:coefficient_bound}
\begin{aligned}
    &\norm{A[f]}_{L^\infty}\norm{f_\ell}_{L^{3/2}_{9/2}}^{3/2} \\
    &\le \left(\norm{A[f_\ell]}_{L^\infty} + \norm{A[\min(f,\ell)]}_{L^\infty}\right)\norm{f_\ell}_{L^{3/2}_{9/2}}^{3/2} \\
    &\le C\left(\norm{f_\ell}_{L^1}^{1/3}\norm{f_\ell}_{L^2}^{2/3} + \norm{\min(f,\ell)}_{L^1}^{2/3}\norm{\min(f,\ell)}^{1/3}_{L^\infty}\right)\norm{f_\ell}_{L^{3/2}_{9/2}}^{3/2}\\
    &\le C\norm{f_\ell}_{L^{2}_{9/2}}\norm{f_\ell}_{L^{3/2}_{9/2}}^{3/2} + C\ell^{1/3}\norm{f_\ell}_{L^{3/2}_{9/2}}^{3/2}\\
    &\le \frac{C}{(\ell - k)^{3/4}}\left(\int_{\R^3} \brak{v}^{9/2} f_k^{13/6} \dd v\right)^{\frac{3}{2}} + \frac{C\ell^{1/3}}{(\ell - k)}\left(\int_{\R^3} \brak{v}^{9/2}f_k^{5/2} \dd v\right)
\end{aligned}
\end{equation}
Next, we use Lemma \ref{lem:sobolev} with $q \in \set{5/2,13/6}$, $p = 3/2$, and $9/2$ moments:
\begin{equation*}
\begin{aligned}
    \int_{\R^3} \brak{v}^{9/2} f_k^{5/2} \dd v &\le \left(\int_{\R^3}\brak{v}^{9/2} f_k^{3/2} \dd v\right)^{2/3}\left(\int_{\R^3} \brak{v}^{3/2} \abs{\nabla f_k^{3/4}}^2 \dd v\right),\\[7pt]
    \int_{\R^3} \brak{v}^{9/2} f_k^{13/6} \dd v &\le \left(\int_{\R^3}\brak{v}^{9/2} f_\ell^{3/2} \dd v\right)^{7/9}\left(\int_{\R^3} \brak{v}^{3/2} \abs{\nabla f_k^{3/4}}^2 \dd v\right)^{2/3}.
\end{aligned}
\end{equation*}
Integrating in time and recalling the definitions of $\mathcal{A}_k$ and $\mathcal{B}_k$ in \eqref{eq:functionals}, we find
\begin{equation}\label{eq:interpolation}
\begin{aligned}
    \int_{T_1}^{T_3}\int_{\R^3} \brak{v}^{9/2} f_k^{5/2} \dd v\dd\tau &\le \mathcal{A}_k^{2/3}(T_1,T_3)\mathcal{B}_k(T_1,T_3),\\[7pt]
    \int_{T_1}^{T_3}\left(\int_{\R^3} \brak{v}^{9/2} f_k^{13/6} \dd v\right)^{3/2} \dd \tau &\le \mathcal{A}_k^{7/6}(T_1,T_3)\mathcal{B}_k(T_1,T_3) .  
\end{aligned}
\end{equation}
Combining \eqref{eq:energy-aux}, \eqref{eq:coefficient_bound}, and \eqref{eq:interpolation}, and using $\ell^{1/3} \le C(1 + \ell)$ we estimate $\mathcal{E}_\ell$ as
\begin{equation*}
\begin{aligned}
    \mathcal{E}_{\ell}(T_2,T_3) \le C\Bigg[1 + \frac{\mathcal{A}_k^{1/2}(T_1,T_3)}{(\ell - k)^{3/4}} + \frac{1}{(T_2 - T_1)(\ell - k)}& + \frac{1 + \ell}{(\ell - k)} + \frac{\ell^2}{(\ell - k)^2}\Bigg] \\
    & \times \mathcal{A}_k^{2/3}(T_1,T_3)\mathcal{B}_k(T_1,T_3).
   \end{aligned}
\end{equation*}

This concludes the proof of Lemma \ref{lem:gain_of_integrability}.

\begin{flushleft}
    \underline{{\bf Step 4: De Giorgi Iteration}}
\end{flushleft}

We now conclude the proof of Proposition \ref{prop:critical_degiorgi} by a De Giorgi iteration. We fix $0 < t < T$ the given times and $K > 0$ the given threshold from the statement of Proposition \ref{prop:critical_degiorgi}. Then, for $\eta > 0$ arbitrary, we define our iteration quantities:
\begin{equation*}
    \ell_n = K + (1 - 2^{-n})\eta, \qquad t_n = t(1 - 2^{-n}), \qquad\text{and} \qquad E_n = \mathcal{E}_{\ell_n}(t_n,T).
\end{equation*}
Then, using Lemma \ref{lem:gain_of_integrability}, monotonicity in $n$, and  we have the following recurrence:
\begin{equation*}
\begin{aligned}
    &E_{n+1} = \mathcal{E}_{\ell_{n+1}}(t_{n+1},T)\\
        &\quad\lesssim  \left(1 + \frac{\mathcal{A}_{\ell_n}(t_n,T)^{1/2}}{(\ell_{n+1} - \ell_n)^{3/4}} + \frac{1}{(t_{n+1} - t_n)(\ell_{n+1} - \ell_n)} + \frac{1 + \ell_n}{(\ell_{n+1} - \ell_n)} + \frac{\ell_n^2}{(\ell_{n+1} - \ell_n)^2} \right)\\
        &\qquad \qquad \qquad \qquad \qquad \qquad \qquad \qquad \qquad \qquad \qquad \qquad \quad \qquad\times \mathcal{A}^{2/3}_{\ell_n}(t_n,T)\mathcal{B}_{\ell_n}(t_n,T)\\
        &\quad\le C_1E_n^{5/3} + C_22^{2n}\left(1 + \frac{1}{\eta t} + \frac{E_0^{1/2}}{\eta^{3/4}} + \frac{K}{\eta} + \frac{K^2}{\eta^2} \right)E_n^{5/3},
\end{aligned}    
\end{equation*}
where $C_1,\, C_2 > 1$ are now fixed constants depending only on $c_0$. We attempt to find a barrier sequence of the form $B_n := B^{-n} E_0$ for some $B > 1$. That is, we wish to find values of $\eta$ and $B$ for which
\begin{equation*}
    B_{n+1} \ge C_1B_n^{5/3} + C_22^{2n}\left(1 + \frac{1}{\eta t} + \frac{E_0^{1/2}}{\eta^{3/4}} + \frac{K}{\eta} + \frac{K^2}{\eta^2}\right)B_n^{5/3}.
\end{equation*}
Inserting the definition of $B_n$, this is implied by
\begin{equation*}
    1 \ge C_1 B^{\frac{-2n + 3}{3}}E_0^{2/3} + C_2 B^{\frac{-2n + 3}{3}}2^{2n} E_0^{2/3}\left(1 + \frac{1}{\eta t} + \frac{E_0^{1/2}}{\eta^{3/4}} + \frac{K}{\eta} + \frac{K^2}{\eta^2} \right).
\end{equation*}
We then pick $B = 8$ so that $B^{\frac{2}{3}} = 4$ and we need only show 
\begin{equation*}
    1 \ge 8(C_1 + C_2)E_0^{2/3} + 8C_2E_0^{2/3}\left(\frac{1}{\eta t} + \frac{E_0^{1/2}}{\eta^{3/4}} + \frac{K}{\eta} + \frac{K^2}{\eta^2} \right).
\end{equation*}
Next, we pick $\eps_0$ so that the first term is less than $1/2$ if $E_0 \le \eps_0$. More precisely, we pick
\begin{equation*}
    \eps_0 = \max\left(1,\frac{1}{64(C_1 + C_2)^{3/2}}\right).
\end{equation*}
We finally pick $\eta$ so that each remaining term is less than $1/8$:
\begin{equation*}
\begin{aligned}
    \eta &= \max\left(64C_2,256C_2^{4/3},8C_2^{1/2}\right)\max\left(E_0^{2/3}t^{-1}, E_0^{14/9}, E_0^{2/3}K, E_0^{1/3}K \right)\\
        &\lesssim_{C_2} 1 + K + E_0^{2/3}t^{-1},
\end{aligned}
\end{equation*}
where the implicit constant depends only on $C_2$.
With these choices of $B$, $\eta$, and $\eps_0$, we find that if $E_0 \le \eps_0$, then $B_n \ge E_n$ for all $n$. More precisely, $B_0 = E_0$, and by our choice of parameters $B_n$ satisfies the reverse inequality so that by induction,
\begin{equation*}
\begin{aligned}
    E_{n+1} &\le C_1E_n^{5/3} + C_22^{2n}\left(1 + \frac{1}{\eta t} + \frac{E_0^{1/2}}{\eta^{3/4}} + \frac{1 + K}{\eta} + \frac{K^2}{\eta^2}\right)E_n^{5/3}\\
        &\le C_1B_n^{5/3} + C_22^{2n}\left(1 + \frac{1}{\eta t} + \frac{E_0^{1/2}}{\eta^{3/4}} + \frac{1 + K}{\eta} + \frac{K^2}{\eta^2}\right)B_n^{5/3}
        \le B_{n+1}.
\end{aligned}
\end{equation*}
Therefore, 
\begin{equation*}
    \norm{f \chi_{\set{f \ge K + \eta}}}_{L^\infty(t,T;L^{3/2}_{9/2})} \le \lim_{n\to\infty} E_n \le \lim_{n\to\infty} B_n = 0,
\end{equation*}
and we conclude $f(\tau,v)\le K + \eta$ pointwise a.e. on $[t,T]\times \R^3$. By the choice of $\eta$, and noting $\eps = E_0 < \eps_0 \le 1$, we have shown the desired pointwise estimate:
\begin{equation*}
    f(\tau,v) \le C^*(K + 1) + C^*\eps^{2/3}t^{-1}  \qquad \text{for almost all }\tau,v \in[t, T]\times \R^3.
\end{equation*}

\section{The local in time \texorpdfstring{$L^{3/2}$}{} theory}\label{sec:short_time1}

\subsection{Propagation of \texorpdfstring{$L^{3/2}_m$}{} for smooth data}

We now wish to propagate the $L^{3/2}_m$ norm of a solution $f$ to \eqref{eq:landau} for a time depending only on the initial data $f_{in}$. This will enable us to use Proposition \ref{prop:critical_degiorgi} to gain unconditional smoothing estimates, which can be used to construct solutions to \eqref{eq:landau}. 
The main result of this section is in the following proposition: 

\begin{proposition} \label{prop:propagation}
    Suppose $f_{in} \in L^{3/2}_m$ is Schwartz class and satisfies
    \begin{equation*}
        \int_{\R^3} \brak{v}^{m}f_{in}^{3/2} \dd v \le H \qquad \text{and} \qquad \int_{\R^3} \brak{v}^{m} (f_{in} - K)_+^{3/2} \dd v = \delta,
    \end{equation*}
    for some $m \ge 9/2$ and $K > 0$.
    Then, there is an constant $\delta_1 = \delta_1(H,m) > 0$ such that if $\delta < \delta_1$ and $f:[0, T^*) \times \R^3 \to \R_+$ is any smooth, rapidly decaying solution to \eqref{eq:landau} with initial datum $f_{in}$, there is $T = T(H, m, K) \in (0,1]$ such that
    \begin{equation}\label{eq:propagation1}
    \begin{aligned}
        \sup_{0 < \tau < \min(T,T^*)} &\int_{\R^3} \brak{v}^{m} (f-K)_+^{3/2} \dd v \\
        &+ \int_0^{\min(T,T^*)}\int_{\R^3} \brak{v}^{m-3}\abs{\nabla (f-K)_+^{3/4}}^2 \dd v \dd \tau < 4\delta,
       \end{aligned}
    \end{equation}
    and, moreover,
    \begin{equation}\label{eq:propagation2}
      \begin{aligned}
        \sup_{0 < \tau < \min(T,T^*)}&\int_{\R^3} \brak{v}^{m} f^{3/2} \dd v\\ 
        &+ \int_0^{\min(T,T^*)}\int_{\R^3}\brak{v}^{m-3} \abs{\nabla f^{3/4}}^2 \dd v \dd\tau \le C(m,K,H).
  \end{aligned}
    \end{equation}
\end{proposition}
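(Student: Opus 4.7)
The proof I would attempt is a bootstrap/continuity argument closing on the two quantities
$$A(t) := \int_{\R^3} \brak{v}^m (f - K)_+^{3/2} \dd v, \qquad B(t) := \int_{\R^3} \brak{v}^{m-3} \abs{\nabla(f-K)_+^{3/4}}^2 \dd v.$$
Concretely, I would define $T^{**}$ as the supremum of times $t \in [0, T^*)$ for which $\sup_{[0,t]} A(\tau) \le 4\delta$, observe $T^{**}>0$ by smoothness of $f$ and $A(0)=\delta < 4\delta$, and then aim to show $T^{**} \ge T$ for some explicit $T = T(H,m,K)$, which gives \eqref{eq:propagation1}; the bound \eqref{eq:propagation2} then follows by applying the same level-set inequality at level $\ell = 0$ with $\|A[f]\|_{L^\infty}$ now controlled.

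The main input is Lemma \ref{lem:level_set_inequality} applied with $p = 3/2$, $\ell = K$, and the given weight $m$, which yields the differential inequality
$$A'(t) + B(t) \;\le\; C \int \brak{v}^m f_K^{5/2} \dd v \;+\; C(\|A[f]\|_{L^\infty} + K)\, A(t) \;+\; C K^2 \int \brak{v}^m f_K^{1/2} \dd v.$$
On $[0, T^{**}]$ the smallness $A(t) \le 4\delta$ lets me dispose of the worst (superlinear) term as follows. By the weighted Sobolev-interpolation inequality (Lemma \ref{lem:sobolev}) with $p = 3/2, q = 5/2$, one has $\int \brak{v}^m f_K^{5/2} \lesssim A(t)^{2/3} B(t) \le (4\delta)^{2/3} B(t)$, which is absorbed into $\tfrac12 B(t)$ once $\delta_1 = \delta_1(H,m)$ is fixed small. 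For the drift coefficient I use the key splitting $A[f] = A[f_K] + A[\min(f,K)]$: the truncated piece has $L^1$ and $L^\infty$ bounds, so Lemma \ref{lem:coefficient_bounds} gives $\|A[\min(f,K)]\|_{L^\infty} \le C(K)$; the remainder $\|A[f_K]\|_{L^\infty}$ is handled by interpolating $L^2$ between $L^{3/2}$ (small) and the weighted $L^{9/2}$ norm (controlled by $B$ through Lemma \ref{lem:sobolev}), producing a contribution that after time integration is of the form $\delta \int_0^t(1+B(s))\dd s$, again absorbable into $B$ up to a harmless Gr\"onwall piece.

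The $K^2 \int \brak{v}^m f_K^{1/2}$ source I would split via the elementary bound $x^{1/2} \le x^{3/2} + 1$ on $\{f > K\}$, so this term is controlled by $CK^2[A(t) + \nu(\{f > K\})]$, where $\nu(E) := \int_E \brak{v}^m \dd v$. The weighted Chebyshev inequality against the hypothesis $\int \brak{v}^m f^{3/2} \le H$ gives $\nu(\{f > K\}) \le H K^{-3/2}$. Integrating the resulting inequality on $[0, t]$ yields
$$A(t) + \tfrac{1}{2}\int_0^t B(s)\dd s \;\le\; \delta \;+\; C_1(H,K,m)\, t \;+\; C_2(H,K,m)\int_0^t A(s)\dd s,$$
and Gr\"onwall with $A \le 4\delta$ forces $A(t) + \int_0^t B < 4\delta$ provided $t$ is smaller than an explicit $T = T(H,m,K)$; this contradicts maximality of $T^{**}$. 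Finally, \eqref{eq:propagation2} follows by re-running Lemma \ref{lem:level_set_inequality} at $\ell = 0$ using the decomposition $f = f_K + \min(f,K)$ so that both $\|A[f]\|_{L^\infty}$ and the superlinear terms are controlled by the smallness just established plus explicit $K$-dependent constants.

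The main obstacle, in my view, is precisely the criticality of $L^{3/2}$ for the diffusion coefficient: one cannot bound $\|A[f]\|_{L^\infty}$ from $A(t)$ alone. The entire scheme hinges on using the \emph{smallness} of $f_K$ in $L^{3/2}_m$ together with gains from the dissipation $B(t)$ to upgrade $f_K$ to a slightly better integrability (weighted $L^{9/2}$ via Sobolev), and on the crude but effective bounds on the $L^\infty$-truncation $\min(f,K)$. Carefully tracking powers of $\delta$ and $B$ so that the source $K^2\int f_K^{1/2}$ and the coefficient term do not destroy the factor-$4$ budget is the delicate bookkeeping that must be performed to close the continuity argument quantitatively.
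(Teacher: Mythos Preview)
Your bootstrap on $A$ alone does not close: the lower-order source $K^2\int\brak{v}^m f_K^{1/2}\,\dd v$ is where it breaks. The Chebyshev step invokes $\int\brak{v}^m f(t)^{3/2}\,\dd v \le H$, but this is a hypothesis only at $t=0$; for $t>0$ it is precisely \eqref{eq:propagation2}, which you propose to derive \emph{after} \eqref{eq:propagation1}. You cannot recover $\nu(\{f>K\})$ from $A(t)$ alone either, since $A$ bounds $\nu(\{f_K>\epsilon\})$ only for $\epsilon>0$ and $\brak{v}^m$ is not integrable. The argument is therefore circular, and the obstruction you single out (unboundedness of $A[f]$) is not the only one: the \emph{bulk} of $\|f\|_{L^{3/2}_m}$ enters through this low-order term and must be propagated simultaneously with the smallness of $A$.

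The paper handles this by introducing an auxiliary bulk quantity $z(t):=\int\brak{v}^m\min(f,2K)^{3/2}\,\dd v$ and deriving a separate evolution inequality for it (Lemma~\ref{lem:level_set_ODE2}); because $\min(f,2K)\le 2K$, its would-be superlinear term collapses to a linear one, and Gr\"onwall expresses $z$ in terms of $y=A$ and $F=B$. The low-order source is then controlled as $K^2\int\brak{v}^m f_K^{1/2}\le K(y+z)$, yielding a genuinely coupled system for $(y,z)$ that closes by a continuity argument on $y$. Your sketch can be repaired by enlarging the bootstrap to also track $\|f(t)\|_{L^{3/2}_m}^{3/2}\le 2H$, but to close the full-norm evolution you must split $\int\brak{v}^m f^{5/2}\lesssim \int\brak{v}^m f_K^{5/2}+K\|f\|_{L^{3/2}_m}^{3/2}$ and feed back the time-integrated dissipation $\int_0^t B$ from the $A$-equation---which is exactly the coupled structure the paper isolates.
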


The proof of Proposition \ref{prop:propagation} is inspired by preceding results of the authors in \cite[Section 3]{GoldingGualdaniLoher}, but our current setting presents significant complications. First, to use Proposition \ref{prop:critical_degiorgi}, we need more than $f\in L^\infty(0,T;L^{3/2}_m)$; we need additional regularity in time. For example, $f\in C(0,T;L^{3/2}_m)$ or $f\in BV(0,T;L^{3/2}_m)$ would be sufficient. To this end, we introduce the following quantities:
\begin{equation}\label{defn:f}
    f_K := (f - K)_+, \quad    y(t) := \int_{\R^3} \brak{v}^m f_K^{3/2} \dd v, \quad F(t) := \int_{\R^3} \brak{v}^{m-3} \abs{\nabla f_K^{3/4}}^2 \dd v. 
\end{equation}
Since $K$ will be fixed, the dependence of $y$ on $K$ is suppressed. Control of $y(t)$ uniformly in time encodes uniform integrability of the profiles $\set{f(t)}_{t \ge 0}$. Our goal now is to propagate smallness of $y$.

This leads, however, to another complication. We are not able to obtain an equation for the evolution of $y(t)$ using just that $f$ solves $\eqref{eq:landau}$. Instead, we must consider the following auxiliary ``bulk'' quantities:
\begin{equation}\label{defn:g}
    g_{2K} := \min(f, 2K), \quad   z(t) := \int_{\R^3} \brak{v}^m g_{2K}^{3/2} \dd v, \quad G(t) := \int_{\R^3} \brak{v}^{m-3} \abs{\nabla g_{2K}^{3/4}}^2 \dd v.
\end{equation}
Note that $f$ and its derivatives decompose as 
$$
f = f_{K} + g_{K} \quad \text{and}\quad \partial_{t,v_i} f = \partial_{t,v_i} f_K + \partial_{t,v_i} g_K.
$$
Although their derivatives have essentially disjoint supports, $f_K$ and $g_K$ do  \emph{NOT} have disjoint support. Therefore, to study the evolution of $y$, the nonlinear reaction term in \eqref{eq:landau} becomes
\begin{equation*}
    f^2 = f_K^2 + 2f_K g_K + g_K^2.
\end{equation*}
We begin by finding the evolution $f_K$, i.e. show that $y(t)$ satisfies a differential inequality, which essentially appeared already in the proof of Proposition \ref{prop:critical_degiorgi}:
\begin{lemma}\label{lem:level_set_ODE}
    Suppose $f:[0,T]\times \R^3 \to \R^+$ is a smooth, rapidly decaying solution to \eqref{eq:landau} with $A[f] \ge c_0\brak{v}^{-3}$. Let $m \geq 9/2$. 
    Then, for any $K > 0$, the corresponding quantities defined in \eqref{defn:f} and \eqref{defn:g} satisfy the differential inequality:
    \begin{equation}\label{eq:ode-y}
        \frac{\dd y(t)}{\dd t} + F(t) \lesssim_{m} F(t)y^{2/3}(t) + (1 + K)y(t) + y^{7/5}(t) + Kz(t),
    \end{equation}
    where the implicit constant $C$ depends only on $m$ and $c_0$.
\end{lemma}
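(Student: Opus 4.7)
The plan is to directly invoke Lemma \ref{lem:level_set_inequality} with parameters $p = 3/2$, $m = 9/2$, $\ell = K$, which produces the raw inequality
\begin{equation*}
    y'(t) + F(t) \lesssim \int_{\R^3} \brak{v}^{9/2} f_K^{5/2} \dd v + (\norm{A[f]}_{L^\infty} + K) y(t) + K^2 \int_{\R^3} \brak{v}^{9/2} f_K^{1/2} \dd v.
\end{equation*}
Each term on the right must then be identified with a term in \eqref{eq:ode-y}. The reaction-type integral is handled by the weighted interpolation Lemma \ref{lem:sobolev} with $p = 3/2$, $q = 5/2$, $k = 9/2$; the required weight index on the $L^{3/2}$-norm computes to exactly $9/2$, producing $\int \brak{v}^{9/2} f_K^{5/2} \dd v \lesssim y^{2/3} F$, which matches the first term on the right of \eqref{eq:ode-y}.

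For the subcritical integral $K^2 \int \brak{v}^{9/2} f_K^{1/2}$, I would split the support of $f_K$ into $\set{K < f \le 2K}$ and $\set{f > 2K}$. On the first region $f_K \le K$ while $g_{2K} = f \ge K$, hence $K^2 f_K^{1/2} \le K^{5/2} \le K\, g_{2K}^{3/2}$. On the second region $f_K > K$, so $f_K^{1/2} \le f_K^{3/2}/K$ and thus $K^2 f_K^{1/2} \le K f_K^{3/2}$. Integrating against $\brak{v}^{9/2}$ gives $\lesssim K z(t) + K y(t)$, delivering the $Kz$ term and part of the $(1+K)y$ term.

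The main obstacle is the nonlocal coefficient term $\norm{A[f]}_{L^\infty} y$, from which the exponent $y^{7/5}$ must emerge. I would split $f = f_K + g_K$ with $g_K := \min(f,K)$ and estimate each piece via Lemma \ref{lem:coefficient_bounds}. The truncated bulk satisfies $\norm{A[g_K]}_{L^\infty} \lesssim \norm{g_K}_{L^1}^{2/3} \norm{g_K}_{L^\infty}^{1/3} \le K^{1/3}$, which is absorbed into $(1+K) y$. The key step is the level-set piece: applying the first coefficient bound with $q = 9/2$ yields
\begin{equation*}
    \norm{A[f_K]}_{L^\infty} \lesssim \norm{f_K}_{L^1}^{4/7} \norm{f_K}_{L^{9/2}}^{3/7} \le \norm{f_K}_{L^{9/2}}^{3/7},
\end{equation*}
since $\norm{f_K}_{L^1} \le \norm{f}_{L^1} = 1$ by \eqref{eq:normalisation}. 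The Sobolev embedding $\dot H^1(\R^3) \embeds L^6(\R^3)$ applied to $f_K^{3/4}$, together with $\brak{v}^{3/2} \ge 1$, gives $\int f_K^{9/2} \lesssim F^3$ and hence $\norm{f_K}_{L^{9/2}} \lesssim F^{2/3}$. Therefore $\norm{A[f_K]}_{L^\infty} \lesssim F^{2/7}$, and Young's inequality with conjugate exponents $7/2$ and $7/5$ delivers
\begin{equation*}
    \norm{A[f_K]}_{L^\infty} y \lesssim F^{2/7} y \le \tfrac{1}{2} F + C y^{7/5},
\end{equation*}
whose $F/2$ is absorbed into the left-hand side. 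Summing all contributions yields \eqref{eq:ode-y}. The choice $q = 9/2$ in the coefficient bound is essential: it is precisely the integrability that Sobolev extracts from $F$, and it produces the clean exponent $7/5$; any other admissible $q$ gives a strictly worse power of $y$ after Young's.
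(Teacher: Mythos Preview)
Your proof is correct and follows essentially the same route as the paper: invoke Lemma \ref{lem:level_set_inequality}, interpolate the $f_K^{5/2}$ term via Lemma \ref{lem:sobolev}, split $A[f] = A[f_K] + A[g_K]$ and use the $q = 9/2$ coefficient bound plus Sobolev to get $\norm{A[f_K]}_{L^\infty} \lesssim F^{2/7}$ (then Young's for the $y^{7/5}$), and handle the $f_K^{1/2}$ integral by splitting on $\set{f \gtrless 2K}$ --- the paper uses H\"older--Chebyshev there instead of your pointwise bounds, but the outcome $K^{-1}(y+z)$ is identical. One minor note: the lemma is stated for general $m \ge 9/2$, not just $m = 9/2$, but your argument goes through verbatim since the interpolation requires only $9/2$ moments on the $L^{3/2}$ side and the unweighted Sobolev step only uses $\norm{\nabla f_K^{3/4}}_{L^2}^2 \le F$.
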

\begin{proof}
    Applying Lemma \ref{lem:level_set_inequality} (with $p = 3/2$, $\ell = K$, and $m \geq 9/2$ fixed), we have the energy estimate:
    \begin{equation*}
    \begin{aligned}
        \frac{\dd}{\dd t}\int_{\R^3} \brak{v}^m f_K^{3/2} \dd v &+ \frac{c_0}{2}\int_{\R^3}\brak{v}^{m-3}\abs{\nabla f_K^{3/4}}^2 \dd v \\
        &\le C \int_{\R^3} \brak{v}^m f_K^{5/2} \dd v + C(\norm{A[f]}_{L^\infty} + K)\int_{\R^3} \brak{v}^m f_K^{3/2} \dd v\\
            &\qquad + CK^2 \int_{\R^3} \brak{v}^m f_K^{1/2} \dd v.
    \end{aligned}
    \end{equation*}
    where the constant $C$ depends only on $c_0$ and $m$. Bounding the highest order term first, we find by Lemma \ref{lem:sobolev} (with $p = 3/2$, $k = 9/2$, and $q = 5/2$):
    \begin{equation*}
        \int_{\R^3} \brak{v}^{m} f_K^{5/2} \dd v \lesssim \left(\int_{\R^3} \brak{v}^{m} f_K^{3/2} \dd v \right)^{2/3}\left(\int_{\R^3} \brak{v}^{m-3} \abs{\nabla f_K^{3/4}}^2 \dd v\right) \lesssim F(t)y^{2/3}.
    \end{equation*}
    Next, we bound the term with $\norm{A[f]}_{L^\infty}$ using Lemma \ref{lem:coefficient_bounds} (with $q = 9/2$), $\norm{f_K}_{L^1} \le 1$, the Sobolev embedding $H^1(\R^3) \embeds L^6(\R^3)$, and Young's inequality:
    \begin{equation*}
    \begin{aligned}
        \norm{A[f]}_{L^\infty}\int_{\R^3} \brak{v}^{m} f_K^{3/2} \dd v &\le \left(\norm{A[f_K]}_{L^\infty} + \norm{A[g_K]}_{L^\infty}\right)y(t)\\
            &\le C\left(\norm{f_K}_{L^1}^{\frac{4}{7}}\norm{f_K}_{L^{9/2}}^{\frac{3}{7}} + \norm{g_K}_{L^1}^{2/3}\norm{g_K}_{L^\infty}^{1/3}\right)y(t)\\
            & \le C\left(\int_{\R^3} \abs{\nabla f_K^{3/4}}^2\right)^{\frac{2}{7}}y  + CK^{1/3}y\\
            &\le \delta F(t) + C\delta^{-\frac{2}{5}}y^{\frac{7}{5}} + CK^{1/3}y.
    \end{aligned}
    \end{equation*}
    Lastly, we bound the lowest order term using H\"older's inequality and Chebychev's inequality for the measure $\dd \nu = \brak{v}^m\dd v$:
    \begin{equation*}
    \begin{aligned}
        \int_{\R^3}& \brak{v}^m f_{K}^{1/2} \dd v \\
        &= \int_{\set{f \ge 2K}} f_{K}^{1/2} \dd \nu + \int_{\set{K \le f < 2K}} f_{K}^{1/2} \dd\nu \\
            &\le \nu\left(\set{f_K \ge K}\right)^{2/3}\left(\int_{\R^3} f_K^{3/2} \dd \nu\right)^{1/3} + \nu\left(\set{g_{2K} \ge K}\right)^{2/3}\left(\int_{\R^3} g_K^{3/2} \dd \nu\right)^{1/3}\\
            &\le K^{-1}\int_{\R^3} \brak{v}^{m} f_{K}^{3/2} \dd v + K^{-1}\int_{\R^3} \brak{v}^m g_{2K}^{3/2} \dd v = K^{-1}(y + z).
    \end{aligned}
    \end{equation*}
    Picking $\delta$ sufficiently small depending only on $c_0$ and $m$ and combining our estimates, we have shown that $y$ satisfies the differential inequality
    \begin{equation*}
        \frac{\dd y}{\dd t} + F(t) \lesssim_{m,c_0} F(t)y^{2/3} + (1 + K)y + y^{7/5} + Kz,
    \end{equation*}
    where the implicit constant $C$ depends only on $m$ and $c_0$.
\end{proof}

Next, we show that the bulk portion of $f$ evolves according to a relatively simple integral inequality. More precisely, in the following lemma, we show that $z(t)$ solves a \emph{constant coefficient, linear equation} with a forcing term, which is nonlinear in $y(t)$. Nonetheless, Gr\"onwall's lemma enables us to bound $z(t)$ in terms of $y(t)$ and reduce \eqref{eq:ode-y} to a scalar, albeit nonlinear, integral inequality.
\begin{lemma}\label{lem:level_set_ODE2}
    Under the same notations and hypotheses as Lemma \ref{lem:level_set_ODE}, there is a constant $C> 0$ depending only on $m$ and $c_0$ such that
   \begin{equation}\label{eq:ode-z}
   \begin{aligned}
     z(t) + \int_0^t G(s)\dd s \leq &z(0) +  2y(0) + C\left(K + 1\right)\int_0^t z(s) \dd s + C K^{3/2}\int_0^t y(s)^{\beta_1} \dd s\\
     &+  C\int_0^t\left(F^{4/7}(s) + K^{1/3} + K\right)y(s) \dd s,
    \end{aligned}
    \end{equation}
     where 
     $\beta_1 \ge 1$ is explicitly computed and depends only on $m$. For $m = 9/2$, $\beta_1 = 1$. 
\end{lemma}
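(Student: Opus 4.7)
The plan is to test the Landau equation~\eqref{eq:landau} against the weighted truncation $\brak{v}^m g_{2K}^{1/2}$, which is an admissible test function since $g_{2K}^{1/2}$ is globally Lipschitz. Using the chain rule $\partial_t g_{2K}^{3/2} = \tfrac{3}{2} g_{2K}^{1/2}\chi_{\{f\le 2K\}}\partial_t f$ together with the identity $g_{2K}^{1/2}\chi_{\{f>2K\}}=(2K)^{1/2}\chi_{\{f>2K\}}$, I would first derive
\begin{equation*}
    \frac{d}{dt}z(t) = \frac{3}{2}\int_{\R^3}\brak{v}^m g_{2K}^{1/2}\partial_t f\dd v - \frac{3}{2}(2K)^{1/2}\frac{d}{dt}\int_{\R^3}\brak{v}^m f_{2K}\dd v,
\end{equation*}
where $f_{2K}:=(f-2K)_+$. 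Since $f_K\ge K$ on $\{f_{2K}>0\}$, Chebychev gives $\int\brak{v}^m f_{2K}\dd v\le K^{-1/2}y(t)$, so defining $\tilde z := z + \tfrac{3}{2}(2K)^{1/2}\int\brak{v}^m f_{2K}\dd v$ one has $z\le\tilde z\le z + Cy$, and the initial-data contribution after integrating in time is precisely of the form $z(0)+Cy(0)$.

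Next I would integrate by parts on the principal term:
\begin{equation*}
    \int\brak{v}^m g_{2K}^{1/2}\partial_t f\dd v = -\int\nabla(\brak{v}^m g_{2K}^{1/2})\cdot A[f]\nabla f\dd v + \int\nabla(\brak{v}^m g_{2K}^{1/2})\cdot\nabla a[f]\,f\dd v.
\end{equation*}
Expanding the chain rule with $\nabla g_{2K}^{1/2}=\tfrac{1}{2}f^{-1/2}\chi_{\{f\le 2K\}}\nabla f$ and using the identity $|\nabla g_{2K}^{3/4}|^2=\tfrac{9}{16}f^{-1/2}\chi_{\{f\le 2K\}}|\nabla f|^2$, the uniform ellipticity $A[f]\ge c_0\brak{v}^{-3}I$ produces the coercive contribution $-\tfrac{8c_0}{9}G(t)$. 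The lower-order diffusion pieces arising from $\nabla\brak{v}^m$, where one uses the identity $g_{2K}^{1/2}\nabla f = \tfrac{2}{3}\nabla g_{2K}^{3/2}+(2K)^{1/2}\nabla f_{2K}$, are handled by Young's inequality together with the decomposition $A[f]=A[f_K]+A[\min(f,K)]$ and Lemma~\ref{lem:coefficient_bounds}, yielding $\|A[f]\|_{L^\infty}\lesssim F^{2/7}+K^{1/3}$ via the Sobolev estimate $\|f_K\|_{L^{9/2}}\lesssim F^{2/3}$.

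For the drift term I would integrate by parts once more and invoke $-\Delta a[f]=f$ to produce the reaction $\int\brak{v}^m g_{2K}^{1/2} f^2\dd v$. The decomposition $f=g_{2K}+f_{2K}$ splits this into $g_{2K}^{5/2}+2g_{2K}^{3/2}f_{2K}+g_{2K}^{1/2}f_{2K}^2$; the first two pieces give the $Kz$ and $Ky$ contributions respectively via $g_{2K}\le 2K$ and the Chebychev bound on $\int\brak{v}^m f_{2K}\dd v$. The third piece, bounded by $(2K)^{1/2}\int\brak{v}^m f_K^2\dd v$, is controlled by Lemma~\ref{lem:sobolev} (for $m=9/2$ this yields $\int\brak{v}^{9/2}f_K^2\dd v\lesssim y^{5/6}F^{1/2}$); after Young's inequality combined with the auxiliary Chebychev estimate $f_K^2\le K^{-r}f_K^{2+r}$ on $\{f>2K\}$, this produces the $K^{\beta_0}y^{\beta_1}$ contribution. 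The remaining cross term $\int\brak{v}^m g_{2K}^{1/2}\nabla a[f]\cdot\nabla f\dd v$ is estimated using $\|\nabla a[f]\|_{L^\infty}\lesssim F^{4/7}+K^{2/3}$ (Lemma~\ref{lem:coefficient_bounds} with $q=9/2$), which is precisely the source of the $F^{4/7}y$ contribution in the target inequality.

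The main obstacle I anticipate is the weight bookkeeping: the test function carries the weight $\brak{v}^m$, while the coercivity $A[f]\ge c_0\brak{v}^{-3}I$ only controls the weight $\brak{v}^{m-3}$ on gradients. Matching these systematically forces repeated use of Lemma~\ref{lem:sobolev} to interpolate between $(y,z)$ and $(F,G)$, and the precise exponents $\beta_0,\beta_1$ in the statement come from keeping careful track of these interpolations for general $m\ge 9/2$; the clean value $\beta_1=1$ for $m=9/2$ reflects the fact that the interpolation moment in Lemma~\ref{lem:sobolev} collapses to $9/2$ in that critical case. A secondary technical point is justifying the chain rule manipulations for the Lipschitz but non-$C^1$ truncation $g_{2K}$, which is handled by approximating $\min(\cdot,2K)$ by smooth concave cutoffs and passing to the limit.
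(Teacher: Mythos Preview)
Your overall strategy---test against $\brak{v}^m g_{2K}^{1/2}$, extract the coercive term, and track the commutators---is correct and matches the paper. However, there is a genuine gap in how you handle the lower-order diffusion and drift remainders multiplying $g_{2K}^{3/2}$.

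After applying Young's inequality to terms like $\int g_{2K}^{3/4}\nabla\brak{v}^m\cdot A[f]\nabla g_{2K}^{3/4}\dd v$, you are left with $\int |A[f]|\brak{v}^{m-2}g_{2K}^{3/2}\dd v$. You propose to bound this via $\|A[f]\|_{L^\infty}\lesssim F^{2/7}+K^{1/3}$, which produces a term $CF^{2/7}z$. Likewise, your treatment of the drift remainder $\int g_{2K}^{3/2}\nabla\brak{v}^m\cdot\nabla a[f]\dd v$ via $\|\nabla a[f]\|_{L^\infty}\lesssim F^{4/7}+K^{2/3}$ gives $CF^{4/7}z$. Neither term appears in \eqref{eq:ode-z}: the stated inequality has \emph{no} $F$-dependent coefficient multiplying $z$. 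This decoupling is essential downstream in Proposition~\ref{prop:propagation}, where one applies Gr\"onwall in $z$ with a coefficient that is merely $C(K+1)$; an extra factor $\exp\big(C\int_0^t F^{4/7}\big)$ would re-couple the $z$-bound to the as-yet-unknown $\int F$, breaking the clean bootstrap.

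The paper avoids this precisely as follows. First, the drift remainder against $g_{2K}^{3/2}$ is converted to a diffusion-type term via $\nabla a[f]=\nabla\cdot A[f]$ and one more integration by parts, so everything reduces to bounding $\int |A[f]|\brak{v}^{m-2}g_{2K}^{3/2}\dd v$. Then, splitting $A[f]=A[f_K]+A[g_K]$, the piece $A[g_K]$ satisfies $\|A[g_K]\|_{L^\infty}\lesssim K^{1/3}$ and contributes only to $(1+K)z$. For $A[f_K]$, the paper does \emph{not} use the $L^\infty$ bound; instead it applies H\"older with exponent $q=\min(4/3,\,m/(m-2))$ and the Hardy--Littlewood--Sobolev inequality $\|A[f_K]\|_{L^{q/(q-1)}}\lesssim\|f_K\|_{L^{3q/(5q-3)}}$, followed by interpolation between $L^1$ and $L^{3/2}$ and the pointwise bound $g_{2K}\le 2K$. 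This yields
\[
\int |A[f_K]|\brak{v}^{m-2}g_{2K}^{3/2}\dd v \;\le\; CK^{\beta_0}y^{\beta_1} + C(1+K)z,
\]
with $\beta_0=\tfrac{3q^2}{2(q-1)^2}$, $\beta_1=\tfrac{3-2q}{q-1}$, and this is the actual source of the $K^{\beta_0}y^{\beta_1}$ term---not the reaction piece $g_{2K}^{1/2}f_{2K}^2$ as you suggested. (Your reaction decomposition is fine as far as it goes, but that piece is more naturally absorbed into the $y^{2/3}F$ or $Ky$ contributions after Chebychev and interpolation.) The $F^{4/7}y$ term in \eqref{eq:ode-z} comes only from $\|\nabla a[f]\|_{L^\infty}$ multiplying quantities controlled by $y$, namely the commutators supported on $\{f>2K\}$.
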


\begin{proof}
    Using that $\partial_t g_{2K} = \chi_{\set{f < 2K}}\partial_t f = \partial_t f - \partial_t f_{2K}$ and $g_{2K} = 2K$ on the support of $f_{2K}$, we compute
    \begin{equation}\label{eq:time_derivative}
    \begin{aligned}
        \frac{\dd}{\dd t}z(t) &= \frac{3}{2}\int_{\R^3} \brak{v}^m g_{2K}^{1/2}\partial_t g_{2K} \dd v\\
            &= \frac{3}{2}\int_{\R^3} \brak{v}^m g_{2K}^{1/2}\partial_t f \dd v - \frac{3(2K)^{1/2}}{2} \frac{\dd}{\dd t}\int_{\R^3} \brak{v}^m  f_{2K} \dd v.
    \end{aligned}
    \end{equation}
    Our main task will be to use that $f$ solves \eqref{eq:landau} to bound the first term on the right hand side of \eqref{eq:time_derivative}, which is the result of taking $\brak{v}^m g_{2K}^{1/2}$ as a test function in the weak formulation of \eqref{eq:landau}: we use $\nabla g_{2K} = \chi_{\set{f < 2K}}\nabla f$ and find
    \begin{equation}\label{eq:bulk_energy1}
    \begin{aligned}
        \int_{\R^3}& \brak{v}^m g_{2K}^{1/2} \partial_t f\\
        &= - \int_{\R^3} \nabla \left(\brak{v}^m g_{2K}^{1/2} \right) \cdot \left(A[f] \nabla f - \nabla a[f] f\right) \dd v\\
        &= -\int_{\R^3} \brak{v}^m \nabla g_{2K}^{1/2} \cdot A[f] \nabla f \dd v - \int_{\R^3} g_{2K}^{1/2}\nabla \brak{v}^m \cdot A[f]\nabla f \dd v\\
        &\quad + \int_{\R^3 } \brak{v}^m\nabla g_{2K}^{1/2} \cdot \nabla a[f] f \dd v + \int_{\R^3} g_{2K}^{1/2}\nabla \brak{v}^m \cdot \nabla a[f]f  \dd v \\
        &= \sum_{j=1}^4 I_j.
    \end{aligned}
    \end{equation}
    The first term, $I_1$, is our usual coercive term, which we rewrite as
    \begin{equation*}
        I_1 = \frac{-8}{9}\int_{\R^3} \brak{v}^m A[f] \nabla g_{2K}^{3/4} \cdot \nabla g_{2K}^{3/4} \dd v.
    \end{equation*}
    We expand the second term using the identity $f = f_{2K} + g_{2K}$ and that $f_{2K}$ and $\nabla g_{2K}$ have disjoint support
    \begin{equation}\label{eq:bulk_energy_term1}
        \begin{aligned}
            I_2 &= -\frac{4}{3}\int_{\R^3} g_{2K}^{3/4}\nabla \brak{v}^m \cdot A[f]\nabla g_{2K}^{3/4} \dd v -  \int_{\R^3}g_{2K}^{1/2} \nabla \brak{v}^m \cdot A[f]\nabla f_{2K} \dd v\\
                &= I_{2,1} - \int_{\R^3} g_{2K}^{1/2}\nabla \brak{v}^m \cdot \left(\nabla \cdot \left(A[f]f_{2K}\right) - \nabla a[f]f_{2K}\right) \dd v\\
                &= I_{2,1} + \int_{\R^3} g_{2K}^{1/2}f_{2K} A[f]:\nabla^2 \brak{v}^m \dd v + \int_{\R^3} g_{2K}^{1/2}f_{2K} \nabla \brak{v}^m \cdot \nabla a[f] \dd v\\
               &= I_{2,1} + I_{2,2} + I_{2,3}
        \end{aligned}
    \end{equation}
    We bound $I_{2,1}$ using Young's inequality: for any $\delta > 0$,
    \begin{equation*}
    \begin{aligned}
        |I_{2,1}| &\le \delta \int_{\R^3} \brak{v}^m A[f]\nabla g_{2K}^{3/4} \cdot g_{2K}^{3/4} \dd v + C\delta^{-1}\int_{\R^3} g_{2K}^{3/2} A[f]\nabla\brak{v}^m\cdot \nabla \brak{v}^m  \dd v\\
            &\le \delta \int_{\R^3} \brak{v}^m A[f]\nabla g_{2K}^{3/4} \cdot g_{2K}^{3/4} \dd v + C\delta^{-1}\int_{\R^3} |A[f]|\brak{v}^{m-2} g_{2K}^{3/2} \dd v
    \end{aligned}
    \end{equation*}
    We bound $I_{2,2}$ and $I_{2,3}$ using H\"older's inequality and Chebychev's inequality as in \eqref{eq:Chebychev} (with $l= 2K$, $k = K$, $\alpha_1 = 1$, and $\alpha_2 = 3/2$):
    \begin{equation*}
    \begin{aligned}
        |I_{2,2}| + |I_{2,3}| &\le CK^{1/2}\left(\norm{A[f]}_{L^\infty} + \norm{\nabla a[f]}_{L^\infty} \right) \int_{\R^3} \brak{v}^m f_{2K} \dd v\\
            &\le C\left(\norm{A[f]}_{L^\infty} + \norm{\nabla a[f]}_{L^\infty} \right) \int_{\R^3} \brak{v}^m f_{K}^{3/2} \dd v
    \end{aligned}
    \end{equation*}
    Next, we further decompose the terms $I_3$ and $I_4$. Integrating by parts and using that $f < 2K$ on $\{f < 2K\}$, so that $g_{2K}f_{2K} = 2K f_{2K}$ on the support of $f_{2K}$, we find
    \begin{equation}\label{eq:bulk_energy_term2}
        \begin{aligned}
            I_3 + I_4 &= \int_{\R^3} \brak{v}^m g_{2K}^{1/2} f^2 \dd v - \int_{\R^3} \brak{v}^m g_{2K}^{1/2} \nabla a[f] \cdot \nabla f \dd v\\
                &= \int_{\R^3} \brak{v}^m g_{2K}^{1/2}f^2 \dd v - \int_{\R^3} \brak{v}^m g_{2K}^{1/2} \nabla a[f] \cdot \nabla g_{2K} \dd v \\
                &\qquad- \int_{\R^3} \brak{v}^m g_{2K}^{1/2} \nabla a[f] \cdot \nabla f_{2K} \dd v\\
                &= \int_{\R^3} \brak{v}^m g_{2K}^{1/2}\left(g_{2K} + f_{2K}\right)f \dd v - \frac{2}{3}\int_{\R^3} \brak{v}^m f g_{2K}^{3/2} \dd v \\
                &\quad+ \frac{2}{3}\int_{\R^3}  g_{2K}^{3/2}\nabla \brak{v}^m \cdot \nabla a[f] \dd v + \int_{\R^3} \brak{v}^m f_{2K}\nabla g_{2K} \cdot \nabla a[f] \dd v \\
                &\quad- \int_{\R^3} \brak{v}^m f f_{2K}g_{2K}^{1/2} + \int_{\R^3} g_{2K}^{1/2} f_{2K}\nabla \brak{v}^m \cdot \nabla a[f] \dd v\\
                &= \frac{1}{3}\int_{\R^3} \brak{v}^m g_{2K}^{3/2}f \dd v + \frac{2}{3}\int_{\R^3}  g_{2K}^{3/2}\nabla \brak{v}^m \cdot \nabla a[f] \dd v \\
                &\quad+ \int_{\R^3} g_{2K}^{1/2} f_{2K}\nabla \brak{v}^m \cdot \nabla a[f] \dd v\\
                &= I_{3,1} + I_{3,2} + I_{3,3}
        \end{aligned}
    \end{equation}
Note that $I_{3,3} = I_{2,3}$ has already been bounded. Now, we bound $I_{3,1}$ using H\"older's inequality and Chebychev's inequality as in \eqref{eq:Chebychev} (with $l= 2K$, $k = K$, $\alpha_1 = 1$, and $\alpha_2 = 3/2$):
\begin{equation*}
    \begin{aligned}
        \int_{\R^3} \brak{v}^m f_{2K}  g_K^{3/2} \dd v &\leq (2K)^{3/2}\int_{\R^3} \brak{v}^m f_{2K}\dd v \leq (2K)^{3/2} K^{-1/2}\int_{\R^3} \brak{v}^m f_K^{3/2}\dd v \\
        &\le CK\int_{\R^3} \brak{v}^m f_K^{3/2}\dd v.
    \end{aligned}
\end{equation*}
We bound $I_{3,2}$ by using $\nabla a[f] = \nabla \cdot A[f]$, integrating by parts, and applying Young's inequality to obtain for any $\delta > 0$,
\begin{equation*}
    \begin{aligned}
        I_{3,2} &=\frac{2}{3}\int_{\R^3} \nabla\brak{v}^m \cdot \left[\nabla \cdot \left(A[f]g_{2K}^{3/2}\right) - A[f]\nabla g_{2K}^{3/2}\right]  \dd v\\ 
        &=- \frac{2}{3} \int_{\R^3}  g_{2K}^{3/2} A[f]:\nabla^2 \brak{v}^m \dd v - \frac{8}{9} \int_{\R^3} g_{2K}^{3/4}\nabla\brak{v}^m \cdot A[f] \nabla g_{2K}^{3/4}  \dd v\\
        &\leq \delta\int_{\R^3} \brak{v}^m A[f] \nabla g_{2K}^{3/4}\cdot \nabla g_{2K}^{3/4} \dd v + C(\delta^{-1} +1)\int_{\R^3} |A[f]|\brak{v}^{m-2} g_{2K}^{3/2} \dd v. 
    \end{aligned}
\end{equation*}
Collecting these estimates, we have shown that upon choosing $\delta$ sufficiently small
\begin{equation*}
    \begin{aligned}
        \int_{\R^3} \brak{v}^m g_{2K}^{1/2}\partial_t f &+ \int_{\R^3} \brak{v}^{m-3} \abs{\nabla g_{2K}^{3/4}}^2 \dd v \\
        &\le CK\int_{\R^3} \brak{v}^m g_{2K}^{3/2} \dd v + C\int_{\R^3} |A[f]|\brak{v}^{m-2} g_{2K}^{3/2} \dd v\\
            &\qquad + C\left(K + \norm{A[f]}_{L^\infty} + \norm{\nabla a[f]}_{L^\infty}\right)\int_{\R^3} \brak{v}^m f_K^{3/2} \dd v.
    \end{aligned}
\end{equation*}
Set $q = \min\left(\frac{4}{3},\frac{m}{m-2}\right)$ so that $\frac{q}{q-1} \ge 4$. Then, by linearity of $A$ and H\"older's inequality, we find
\begin{equation*}
\begin{aligned}
    \int_{\R^3} &|A[f]|\brak{v}^{m-2} g_{2K}^{3/2} \dd v \\
    &= \int_{\R^3} |A[f_K]|\brak{v}^{m-2} g_{2K}^{3/2} \dd v + \int_{\R^3} |A[g_K]|\brak{v}^{m-2} g_{2K}^{3/2} \dd v\\
        &\le \norm{A[f_K]}_{L^{\frac{q}{q-1}}}\left(\int_{\R^3} \brak{v}^{q(m-2)} g_{2K}^{3q/2} \dd v \right)^{1/q} + \norm{A[g_K]}_{L^\infty}\int_{\R^3} \brak{v}^{m} g_{2K}^{3/2} \dd v.
\end{aligned}
\end{equation*}
Then we use Hardy-Littlewood-Sobolev's inequality, see \cite{lieb}, (recalling that the kernel of $A$ belongs to the Lorentz space $L^{3,\infty}(\R^3)$), to bound
\[
	 \norm{A[f_K]}_{L^{\frac{q}{q-1}}} \leq \norm{f_K}_{L^{q'}},
\]
with
\[
	\frac{1}{q'} = 1 + \frac{q-1}{q} - \frac{1}{3} = \frac{5q-3}{3q}.
\]
Combined with the fact that $g_{2K} \le 2K$, an interpolation of Lebesgue spaces, and Young's inequality we arrive at
\begin{equation*}
\begin{aligned}
	\int_{\R^3}& |A[f]|\brak{v}^{m-2} g_{2K}^{3/2} \dd v \\
  	&\le CK^{\frac{3(q-1)}{2q}}\norm{f_K}_{L^{\frac{3q}{5q-3}}}\left(\int_{\R^3} \brak{v}^m g_{2K}^{3/2} \dd v \right)^{1/q} + C\norm{g_K}_{L^1}^{2/3}\norm{g_K}_{L^\infty}^{1/3}\int_{\R^3} \brak{v}^{m} g_{2K}^{3/2} \dd v\\
        &\le CK^{\frac{3(q-1)}{2q}}\norm{f_K}_{L^1}^{\frac{3q - 3}{q}}\norm{f_K}_{L^{3/2}}^{\frac{3-2q}{q}}\left(\int_{\R^3} \brak{v}^m g_{2K}^{3/2} \dd v \right)^{1/q} + CK^{1/3}\int_{\R^3} \brak{v}^{m} g_{2K}^{3/2} \dd v\\
        &\le CK^{\frac{3}{2}}\norm{f_K}_{L^{3/2}}^{\frac{3-2q}{q-1}} + C\left(1 + K\right)\int_{\R^3} \brak{v}^m g_{2K}^{3/2} \dd v,
\end{aligned}
\end{equation*}
or in terms of $F$ and $z$, we may equivalently write 
\begin{equation}\label{eq:evol-tildef}
\begin{aligned}
    \int_{\R^3}& g_{2K}^{1/2}\partial_t f \dd v + G(t) \\
    &\le C\left(1 + K \right)z(t) + C\left(\norm{A[f]}_{L^\infty} + \norm{\nabla a[f]}_{L^\infty} + K\right) y(t) + CK^{3/2} y(t)^{\beta_1},
   \end{aligned}
\end{equation}
where 
$\beta_1 = \frac{3-2q}{q-1}$ for $q = \min\left(\frac{4}{3},\frac{m}{m-2}\right)$.
We now estimate $\norm{A[f]}_{L^\infty}$ with Lemma \ref{lem:coefficient_bounds}, $\norm{(f-2K)_+}_{L^1} \leq 1$, and Lemma \ref{lem:sobolev}:
\begin{equation}\label{eq:big_a_bound}
    \begin{aligned}
        \norm{A[f]}_{L^\infty}& = \norm{A[f_{K}]}_{L^\infty} + \norm{A[g_{K}]}_{L^\infty} \\
        &\leq C\norm{f_{K}}_{L^1}^{4/7}\norm{f_{K}}_{L^{9/2}}^{3/7} + C \norm{g_{K}}_{L^1}^{2/3}\norm{g_{K}}_{L^\infty}^{1/3}\\
            &\leq C\left(\int_{\R^3} \abs{\nabla f_{K}^{3/4}}^2 \dd v\right)^{2/7} +CK^{1/3}.
    \end{aligned}
\end{equation}  
Similarly, we bound $\norm{\nabla a[f]}_{L^\infty}$ as
\begin{equation}\label{eq:little_a_bound}
    \begin{aligned}
        \norm{\nabla a[f]}_{L^\infty} &\le \norm{\nabla a[f_{K}]}_{L^\infty} + \norm{\nabla a[g_{K}]}_{L^\infty} \\
        &\le C\norm{f_{K}}_{L^1}^{1/7}\norm{f_{K}}_{L^{9/2}}^{6/7} + C\norm{g_{K}}_{L^1}^{1/3}\norm{g_{K}}_{L^\infty}^{2/3}\\
            &\le C\left(\int_{\R^3} \abs{\nabla f_{K}^{3/4}}^2 \dd v\right)^{4/7} + CK^{2/3}.
    \end{aligned}
\end{equation}
Combining \eqref{eq:evol-tildef} with the coefficient bounds \eqref{eq:big_a_bound} and \eqref{eq:little_a_bound}, we obtain the following differential inequality:
\begin{equation*}
    \int_{\R^3} g_{2K}^{1/2}\partial_t f \dd v  + G(t) \lesssim_{m,c_0} \left(1 + K\right)z(t) + \left(F^{4/7}(t) + K^{1/3} + K\right)y(t) + K^{3/2} y(t)^{\beta_1}.
\end{equation*}
Using the identity \eqref{eq:time_derivative}, we have shown
\begin{equation*}
\begin{aligned}
    \frac{\dd}{\dd t} z(t) &+ (2K)^{1/2}\frac{\dd}{\dd t}\int_{\R^3} \brak{v}^m f_{2K} \dd v + G(t) \\
    &\le C\left(1 + K\right)z(t) + C\left(F^{4/7}(t) + K^{1/3} + K\right)y(t) + CK^{3/2} y(t)^{\beta_1}.
\end{aligned}
\end{equation*}
Integrating in time, we find the integral inequality:
\begin{equation*}
\begin{aligned}
    z(t) + \int_0^t G(s) \dd s &\le z(0) + 2K^{1/2}\int_{\R^3} \brak{v}^m f_{2K}(0) \dd v + C\left(1 + K\right)\int_0^t z(s) \dd s\\
        &\qquad + C\int_0^t\left(F^{4/7}(s) + K^{1/3} + K\right)y(s) \dd s + CK^{3/2}\int_0^t y(s)^{\beta_1} \dd s.
\end{aligned}
\end{equation*}
Finally, using H\"older's inequality and Chebychev's inequality as in \eqref{eq:Chebychev} (with $l= 2K$, $k = K$, $\alpha_1 = 1$, and $\alpha_2 = 3/2$):
\begin{equation*}
        K^{1/2}\int_{\R^3} \brak{v}^m f_{2K}(0) \dd v \leq \int_{\R^3} \brak{v}^m f_K^{3/2}(0)\dd v = y(0),
\end{equation*} 
which concludes the proof of \eqref{eq:ode-z}.
\end{proof}

From the previous two lemmas, we obtain a system of differential inequalities for the evolution of $y(t)$ and $z(t)$. We now show that for a quantifiable time, the system propagates smallness of $y(t)$ and boundedness of $z$, the bulk of the $L^{3/2}$ norm of $f$. 

\begin{flushleft}
\underline{{\bf Proof of Proposition \ref{prop:propagation}}}    
\end{flushleft}

Fix initial data $f_{in}$, a weight $m$, and bound $H$. Recall from Lemma \ref{lem:ellipticity} that there is a $c_0 = c_0(H)$ such that for any smooth solution $f$ to \eqref{eq:landau} with initial datum $f_{in}$, $c_0\brak{v}^{-3} \le A[f]$.
For $K > 0$ arbitrary and $f:[0,T^*)\times \R^3\to R^+$ the unique Schwartz class solution to \eqref{eq:landau} with data $f_{in}$ define $y(t)$, $F(t)$, $z(t)$, and $G(t)$ via \eqref{defn:f} and \eqref{defn:g}. From Lemma \ref{lem:level_set_ODE}, for any $K > 0$, there is a constant $\alpha = \alpha(m,c_0) > 0$ such that $y$ solves
\begin{equation}\label{eq:ode_y1}
    \frac{\dd y(t)}{\dd t} + \frac{1}{2} F(t) \lesssim_{c_0,m}  -F(t)\left(1 - \alpha y^{2/3}\right) + (1 + K)y(t) + y^{7/5}(t) + Kz(t).
\end{equation}
We now pick $\delta_1 = \delta_1(m,H)$ as the solution to
\begin{equation*}
    1 - \alpha \left(2\delta_1\right)^{2/3} = 0 \qquad \text{or equivalently}\qquad \delta_1 = \frac{1}{2\alpha^{3/2}}.
\end{equation*}
Finally, fix any $\delta < \delta_1$ and $K > 0$ such that
\begin{equation*}
        y(0) = \int_{\R^3} \brak{v}^{9/2} (f_{in} - K)_+^{3/2} \dd v = \delta,
\end{equation*}
which completely fixes $y$, $z$, $F$, and $G$. From here on, inessential constants are allowed to depend on $m$, $H$, and also $K$.
Now, integrating \eqref{eq:ode-z}, and noting that $z(0) \le \norm{f_{in}}_{L^{3/2}_m} = H$, $z$ solves the integral inequality,
\begin{equation*}
     z(t) + \int_0^t G(s)\dd s \leq 2H + C\int_0^t F^{4/7}(s)y(s) + y(s) + y(s)^{\beta_1} \dd s + C\int_0^t z(s) \dd s,
\end{equation*}
where $\beta_1 = \beta_1(m) \ge 1$ is defined in Lemma \ref{lem:level_set_ODE2}. Since $F$ and $y$ are nonnegative, the first integral on the right hand side is increasing in $t$ and the integral form of Gr\"onwall's inequality implies that for each $0 \le t \le 1$,
\begin{equation}\label{eq:z-explicit}
    z(t) \leq C + C\int_0^t F^{4/7}(s)y(s) + y(s) + y(s)^{\beta_1} \dd s.
\end{equation}
Substituting this explicit inequality for $z$ into the right hand side of \eqref{eq:ode_y1}, we obtain
\begin{equation*}
\begin{aligned}
    \frac{\dd y(t)}{\dd t} +\frac{1}{2} F(t)& \le -CF(t)\left(1-\alpha y^{2/3}\right) + Cy(t) + Cy^{7/5}(t) \\
    &\quad+ C\left(\int_0^t F^{4/7}(s)y(s) + y(s) + y(s)^{\beta_1} \dd s + C \right).
\end{aligned}
\end{equation*}
Integrating in time, and using monotonicity to remove the double integrals in time, for each $0 \le t \le 1$,
\begin{equation*}
\begin{aligned}
    y(t) +\frac{1}{2} \int_0^t F(s)\dd s &\le \delta - C\int_0^t F(s)\left(1-\alpha y^{2/3}(s)\right)\dd s + C\int_0^t y(s) + y^{7/5}(s) \dd s\\
        &\qquad + C\int_0^t\int_0^s F^{4/7}(\tau)y(\tau) + y(\tau) + y(\tau)^{\beta_1} \dd\tau\dd s + Ct \\
    &\le \delta - C\int_0^t F(s)\left(1-\alpha y^{2/3}(s)\right) \dd s + C\int_0^t y(s) + y^{7/5}(s) \dd s\\
        &\qquad + C\int_0^t F^{4/7}(s)y(s) + y(s) + y(s)^{\beta_1} \dd s + Ct.
\end{aligned}
\end{equation*}
Using Young's inequality, we arrive at the final form of our differential inequality for $y$:
\begin{equation}\label{eq:ode_y2}
\begin{aligned}
    y(t) + \frac{1}{4}\int_0^t F(s)\dd s \le &\delta + C_1t - C_1\int_0^t F(s)\left(1-\alpha y^{2/3}(s)\right)\dd s \\
    &+ C_1\int_0^t y(s) + y^{\max(\beta_1,7/3)}(s) \dd s,
\end{aligned}
\end{equation}
where $C_1 > 0$ is now a fixed constant. We then fix $T = T(m,H,K)$ as the minimum of $1$ and the unique solution of
\begin{equation*}
    2\delta = \delta + C_1T\left(1 + 2\delta + (2\delta)^{\max(\beta_1,7/3)}\right)
\end{equation*}
We claim that $y(t) \le 2\delta$ for each $0 < t < \min(T,T^*)$. By assumption $y(0) = \delta < 2\delta$. 
Thus, by continuity of $y$, either there holds $y(t) < 2\delta$ for any $t \in (0, \min(T,T^*))$ or there is $t_0 \in (0, \min(T,T^*))$ such that $y(t_0) = 2\delta$. 

Suppose for the sake of contradiction that $t_0 < \min(T,T^*)$. Then, for any $t \in [0, t_0]$ there holds $y(t) \le 2\delta < 2\delta_1$. By the definition of $\delta_1$, we conclude
\begin{equation*}
    - C\int_0^t F(s)\left(1-\alpha y^{2/3}(s)\right)\dd s \le 0, \qquad \text{for each }0\le t \le t_0.
\end{equation*}
Therefore, \eqref{eq:ode_y2} implies that $y$ satisfies
\begin{equation*}
    y(t) \le \delta + C_1t +  C_1\int_0^t y(s) + y^{\max(\beta_1,7/3)}(s) \dd s \le \delta + C_1t\left(1 + 2\delta + (2\delta)^{\max(\beta_1,7/3)}\right),  
\end{equation*}
for each $0 \le t \le t_0$.
Therefore, evaluating at $t = t_0$,
\begin{equation*}
    2\delta \le  \delta + C_1t_0\left(1 + 2\delta + (2\delta)^{\max(\beta_1,7/3)}\right) < \delta + C_1T\left(1 + 2\delta + (2\delta)^{\max(\beta_1,7/3)}\right) \le 2\delta, 
\end{equation*}
a contradiction. It follows that $t_0 \ge \min(T,T^*)$, i.e. $y(t) \le 2\delta$ for $t\in [0,\min(T,T_0))$. From \eqref{eq:ode_y2},
we conclude that for $0 < t < \min(T,T^*)$, there holds
\begin{equation*}\begin{aligned}
    \frac{1}{4}\int_0^t F(s) \dd s &\le \delta + C_1t + C_1\int_0^t y(s) + y^{\max(\beta_1,7/3)}(s) \dd s \\
    &\le \delta + C_1T\left(1 + 2\delta + (2\delta)^{\max(\beta_1,7/3)}\right) \le 2\delta,
    \end{aligned}
\end{equation*}
which implies \eqref{eq:propagation1}. Therefore, returning to \eqref{eq:z-explicit} and \eqref{eq:ode-z}, we use the bounds on $y(t)$ and $F(t)$ to obtain
\begin{equation*}
    z(t) + \int_0^t G(s)\dd s \le C(K,H,m).
\end{equation*}
Finally, using the bounds on $y$ and $z$ we propagate the $L^{3/2}_{9/2}$ and using the bounds on $F$ and $G$ we show the creation of bounds on the energy:
\begin{equation*}
    \norm{f}_{L^\infty(0,T;L^{3/2}_{m})}^{3/2} \le y(t) + z(t) \le C(K,H,m), 
\end{equation*}
and
\begin{equation*}
 	\norm{\nabla f^{3/4}}_{L^2(0,T;L^2_{m-3})} \le \int_0^tF(s) + G(s) \dd s \le C(K,H,m).
\end{equation*}

\subsection{Construction of solution}
In this section, we prove Theorem \ref{thm:short} (with $p = 3/2$) using an approximation argument similar to \cite[Section 5]{GoldingLoher}. A similar argument will be used in Section \ref{sec:short_time2} to construct solutions for $p > 3/2$.

\begin{flushleft}
    \underline{{\bf Approximation}}
\end{flushleft}

Take $f_{in} \in L^{3/2}_{9/2}(\R^3)$ satisfying \eqref{eq:normalisation} and
such that 
\begin{equation*}
        \int_{\R^3} \brak{v}^{9/2}f_{in}^{3/2} \dd v \le H.
\end{equation*}    
Let $\{f_{in, \eps}\}_{\eps > 0}$ be a family of Schwartz class initial datum such that
\begin{enumerate}[i.]
    \item for each  $\eps > 0$, $f_{in, \eps} \in \mathcal S(\R^3)$,
    \item for each  $\eps > 0$, $f_{in, \eps}$ is normalized through \eqref{eq:normalisation}, 
    \item for each  $\eps > 0$, 
    \begin{equation*}
        \begin{aligned}
            \int_{\R^3} \brak{v}^{9/2}f_{in, \eps}^{3/2} \dd v \le 2H,
        \end{aligned}
    \end{equation*}
    \item $\{f_{in, \eps}\}_{\eps > 0} \to f_{in}$ as $\eps \to 0^+$ strongly in  $L^{3/2}_{9/2} \cap L^1_2$ and pointwise almost everywhere.
\end{enumerate}
Then, by Theorem \ref{thm:golding-loher}, there is a family of local-in-time Schwartz class solutions $f_{\varepsilon} : [0, T_\varepsilon) \times \R^3 \to \R_+$ for \eqref{eq:landau} with initial data $f_{in, \eps}$, existing up to time $T_\varepsilon$, which is either infinite, or else
\begin{equation}\label{eq:continuation-Teps}
    \lim_{t \nearrow T_\varepsilon} \norm{f_{\varepsilon}(t)}_{L^\infty} = +\infty.
\end{equation}

\begin{flushleft}
    \underline{{\bf Uniform lower bound on time}}
\end{flushleft}

We first show that the approximations exist on a uniform time interval, i.e. the maximal time of existence $T_\varepsilon$ is bounded below, uniformly in $\eps$. 
Fix $\delta_1 = \delta_1(H)$ as in Proposition \ref{prop:propagation} and $\eps_0 = \eps_0(H)$ as in Proposition \ref{prop:critical_degiorgi}, which are uniform in $\eps$. Then, because $\brak{v}^{9/2}f_{in,\eps}^{3/2} \to \brak{v}^{9/2}f_{in}^{3/2}$ in $L^1(\R^3)$, the family is uniformly integrable, allowing us to choose $K$ independent of $\eps$ so that
\begin{equation}\label{eq:choice of K}
    \int_{\R^3} \brak{v}^{\frac{9}{2}}(f_{in, \eps} - K)_+^{\frac{3}{2}} \dd v < \frac{\eps_0}{6}.
\end{equation}
Applying Proposition \ref{prop:propagation}, there is a $T = T(K, H) \in (0,1]$ independent of $\eps$ such that
\begin{equation*}
        \sup_{0 < \tau < \min(T,T_{\eps})} \int_{\R^3} \brak{v}^{9/2} (f_{\varepsilon}-K)_+^{3/2} \dd v + \int_0^{\min(T,T_{\eps})}\int_{\R^3} \brak{v}^{3/2}\abs{\nabla (f_{\varepsilon}-K)_+^{3/4}}^2 \dd v < \eps_0.
\end{equation*}
By Proposition \ref{prop:critical_degiorgi}, there is $C = C(K,H) > 0$, independent of $\varepsilon$, such that
\begin{equation}\label{eq:Linfty-eps}
    \norm{f_{\varepsilon}(t)}_{L^\infty(\R^3)} \leq C\left(1 + t^{-1}\right), \qquad 0 < t < \min(T, T_\varepsilon).
\end{equation}
Finally, the continuation criterion \eqref{eq:continuation-Teps} for $f_{\eps}$, implies $T_{\eps} > T$ for each $\eps > 0$, providing a uniform in $\eps$ lower bound on the time of existence of $f_{\eps}$. We will construct solutions on this uniform time interval $[0,T]$. 

\begin{flushleft}
    \underline{{\bf Higher regularity estimates}}
\end{flushleft}

To construct a smooth solution $f$ on $[0, T]$ we need strong compactness, which follows from uniform in $\eps$ higher regularity estimates. The precise form of higher regularity estimates we will use are the local estimates contained in the following lemma, which is shown in \cite[Lemma 5.2]{GoldingLoher}:
\begin{lemma}[Higher regularity]\label{lem:higher-regularity}
The family $f_{\varepsilon}: [0, T] \times \R^3 \to \R_+$ constructed above satisfies for each $R > 0$, $0 < t < T$, $k \in \N$, the following regularity estimates
\begin{equation*}
    \norm{f_{\varepsilon}}_{C^k((t, T) \times B_R)} \leq C(H, K, R, t, k).
\end{equation*}
\end{lemma}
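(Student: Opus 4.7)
The plan is to reduce equation \eqref{eq:landau} to a linear, uniformly parabolic equation with bounded measurable coefficients on compact subsets of $(0,T) \times \R^3$, and then bootstrap from H\"older continuity to arbitrary smoothness via standard parabolic theory. The argument is entirely local and does not exploit any special structure beyond that already used for \eqref{eq:Linfty-eps}.

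First, I would use the uniform $L^\infty$ bound \eqref{eq:Linfty-eps} together with Lemma \ref{lem:coefficient_bounds} (applied with $p = 1$ and $q = \infty$ for both coefficients) to obtain $\norm{A[f_\eps](\tau)}_{L^\infty} + \norm{\nabla a[f_\eps](\tau)}_{L^\infty} \le C(1 + \tau^{-1})$, uniformly in $\eps$, for $\tau \in (0,T)$. Combined with the ellipticity $A[f_\eps] \ge c_0 \brak{v}^{-3} I$ from Lemma \ref{lem:ellipticity}, rewriting \eqref{eq:landau} in non-divergence form
\begin{equation*}
    \partial_t f_\eps = \mathrm{tr}(A[f_\eps] D^2 f_\eps) + (\nabla \cdot A[f_\eps] - \nabla a[f_\eps]) \cdot \nabla f_\eps - f_\eps^2
\end{equation*}
produces a uniformly parabolic equation with bounded measurable coefficients and bounded right-hand side on $(\tau, T) \times B_{2R}$. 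The De Giorgi-Nash-Moser theorem then yields a uniform H\"older estimate $\norm{f_\eps}_{C^{\alpha}((\tau,T) \times B_R)} \le C(H,K,R,\tau)$ for some $\alpha \in (0,1)$ independent of $\eps$.

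Next, I would bootstrap. Once $f_\eps$ is locally $C^\alpha$, singular integral theory applied to the convolutions defining $A[f_\eps]$ and $\nabla a[f_\eps]$, together with the uniform $L^1$ moment bounds from Lemma \ref{lem:moments} (inherited from the uniform control on $f_{in,\eps}$), yields $C^\alpha$ bounds on these coefficients on compact sets. Parabolic Schauder estimates then upgrade $f_\eps$ to $C^{2,\alpha}_{\mathrm{loc}}$ on a slightly smaller cylinder. Differentiating \eqref{eq:landau}, each derivative $\partial^\beta f_\eps$ solves an equation of the same structure with coefficients and source terms depending only on lower-order derivatives of $f_\eps$, so iterating the Schauder step yields uniform $C^k$ bounds on $(t,T) \times B_R$ for every $k \in \N$.

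The main technical subtlety is the nonlocality of the coefficients: although the H\"older bound on $f_\eps$ is local, $A[f_\eps]$ and $\nabla a[f_\eps]$ depend on the global behavior of $f_\eps$. This is handled by splitting each convolution into a near-field piece, which inherits $C^\alpha$ regularity from the local H\"older bound on $f_\eps$, and a far-field piece, which is smooth with uniform bounds thanks to the moment bounds of Lemma \ref{lem:moments} and the weighted $L^{3/2}$ estimate \eqref{eq:propagation2}. This is the step where the polynomial localization propagated in Proposition \ref{prop:propagation} is essential, so that the resulting constants depend only on $H$, $K$, $R$, $t$, and $k$, but not on $\eps$.
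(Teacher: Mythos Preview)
Your proposal is correct and follows essentially the same approach as the paper's (sketched) proof: bound the coefficients via \eqref{eq:Linfty-eps} and Lemma \ref{lem:coefficient_bounds}, apply De Giorgi--Nash--Moser for local H\"older regularity, then bootstrap via Schauder estimates, with the near-field/far-field splitting handling the nonlocality of the coefficients. One harmless sign slip: in non-divergence form the source term is $+f_\eps^2$, not $-f_\eps^2$, since $-\Delta a[f] = f$ (and in fact $\nabla \cdot A[f] = \nabla a[f]$, so the drift term vanishes identically).
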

The proof is based on combining the coefficient bounds in Lemma \ref{lem:coefficient_bounds} and \eqref{eq:Linfty-eps} to obtain uniform in $\varepsilon$ bounds. In particular, for positive times \eqref{eq:landau} can be viewed as a linear second order parabolic equation with bounded measurable coefficients, for which bootstrapping argument and classical parabolic regularity theory (viz. DeGiorgi-Nash-Moser estimates and Schauder estimates) yield Lemma \ref{lem:higher-regularity}.

\begin{flushleft}
    \underline{{\bf Passing to the limit}}
\end{flushleft}

As a consequence of the uniform estimates in \eqref{eq:Linfty-eps}, Lemma \ref{lem:higher-regularity}, Proposition \ref{prop:propagation}, the Sobolev embedding \eqref{eq:sobolev}, and standard compactness results, we have a limit $f: [0, T] \times \R^3 \to \R_+$, such that $f_{\varepsilon} \to f$ in the following topologies:
\begin{enumerate}[i.]
    \item strongly in $C^k((t, T) \times B_R)$ for each $k \in \N$, for each $R > 0$ and for $0 < t < T$;
    \item pointwise almost everywhere in $[0, T] \times \R^3$;
    \item strongly in $L^1([0, T] \times \R^3)$;\footnote{The decay of the entropy implies and uniform bound on the energy implies strong compactness in $L^1$.}
    \item weak-starly in $L^\infty(0, T; L^{3/2}_{9/2}(\R^3)) \cap L^{5/2}(0,T;L^{5/2}_{9/2})$;
    \item $\nabla f_{\eps}^{3/4} \to \nabla f^{3/4}$ weakly in $L^2(0,T;L^2_{3/2})$;
    \item and $t^{-1} f_{\varepsilon} \to t^{-1}f$ weak-starly in $L^\infty((0, T) \times \R^3)$.
\end{enumerate}
In particular, the limit function $f$ satisfies $f \geq 0$ and by Fatou's lemma $f\in L^\infty(0,T;L^1_2)$. Therefore, once we show that the limit satisfies \eqref{eq:landau}, $f$ will automatically satisfy the normalization \eqref{eq:normalisation}. 
Moreover, from iv. and v., we see that $f$ satisfies the desired estimates:
\begin{equation*}
    \sup_{0 < t < T}\norm{f(t)}_{L^{3/2}_{9/2}} + \sup_{0 < t < T} t \norm{f(t)}_{L^\infty} \le C(H,K)
\end{equation*}
Finally, $f$ also becomes instantaneously smooth, that is $f \in C^\infty((t, T) \times \R^3)$ for each $0 < t < T$.

\begin{flushleft}
    \underline{{\bf Limit equation}}
\end{flushleft}

It remains to show that the limit $f$ satisfies \eqref{eq:landau} on $[0, T]\times \R^3$ with initial data $f_{in}$ and that $f\in C([0,T];L^{3/2}_{9/2})$.
First, we show that $f$ is a classical solution to \eqref{eq:landau} on $(0,T)\times \R^3$. Because $f$ is smooth, it suffices to show that the equation is satisfied in a weak sense. 
Since $f_{\varepsilon}$ solves \eqref{eq:landau}, there holds for any $\varphi \in C_c^\infty((t, T) \times \R^3)$,
\begin{equation}\label{eq:weak-landau-eps}
    \int_t^T \int_{\R^3} \varphi \partial_t f_{\varepsilon} \dd v \dd t = -\int_t^T \int_{\R^3} \nabla \varphi \cdot \left(A[f_{\varepsilon}]\nabla f_{\varepsilon} - \nabla a[f_{\varepsilon}] f_{\varepsilon}\right) \dd v \dd \tau.
\end{equation}
The diffusion coefficient converges to the limit diffusion coefficient, since by Lemma \ref{lem:coefficient_bounds} and H\"older's inequality, we have
\begin{equation*}
    \norm{A[f_{\varepsilon}-f]}_{L^1(t, T; L^\infty(\R^3))} \leq C \norm{f_{\varepsilon}-f}_{L^1(t, T; L^1(\R^3))}^{\frac{2}{3}}\norm{f_{\varepsilon}-f}_{L^1(t, T; L^\infty(\R^3))}^{\frac{1}{3}},
\end{equation*}
which converges to zero as $\varepsilon \to 0^+$ by iv. and v. above. Similarly, for $\nabla a$, again by Lemma \ref{lem:coefficient_bounds} and H\"older's inequality, we have the bound
\begin{equation*}
    \norm{\nabla a[f_{\varepsilon}-f]}_{L^1(t, T; L^\infty(\R^3))} \leq C\norm{f_{\varepsilon}-f}_{L^1(t, T; L^1(\R^3))}^{\frac{1}{3}}\norm{f_{\varepsilon}-f}_{L^1(t, T; L^\infty(\R^3))}^{\frac{2}{3}}, 
\end{equation*}
which also converges to zero as $\eps \to 0^+$ by iv. and v. above. The global convergence of the coefficients together with the local compactness in $C^1$ given by i. implies in particular for any $\varphi \in C_c^\infty((t, T) \times \R^3)$
\begin{equation*}
    \int_t^T \int_{\R^3} \varphi \partial_t f \dd v \dd \tau = -\int_t^T \int_{\R^3} \nabla \varphi \cdot \left(A[f]\nabla f - \nabla a[f] f\right) \dd v \dd \tau.
\end{equation*}
Thus, $f$ solves \eqref{eq:landau} in a distributional sense, and consequently $f$ solves \eqref{eq:landau} in a classical sense on $(0,T)\times \R^3$.

\begin{flushleft}
    \underline{{\bf Convergence to the initial datum}}
\end{flushleft}
To show that the initial datum is obtained in a suitable strong sense, we integrate by parts in \eqref{eq:weak-landau-eps} to obtain that $f_{\varepsilon}$ satisfies for any $\varphi \in C_c^\infty([0, T) \times \R^3)$
\begin{equation*}
      \int_0^T \int_{\R^3} \varphi \partial_t f_{\varepsilon} \dd v \dd t = \int_0^T \int_{\R^3} \nabla^2 \varphi : A[f_{\varepsilon}] f_{\varepsilon} +2 \nabla \varphi\cdot \nabla a[f_{\varepsilon}] f_{\varepsilon} \dd v \dd t.
\end{equation*}
Then we bound with Hölder's inequality, $f_{\varepsilon} \in L^\infty(0, T; L^{3/2}(\R^3))$, Lemma \ref{lem:coefficient_bounds}, and the smoothing estimate \eqref{eq:Linfty-eps}
\begin{align*}
    \int_0^T &\int_{\R^3} \nabla^2 \varphi : A[f_{\varepsilon}] f_{\varepsilon} \dd v \dd t\\
     &\leq \norm{f_{\varepsilon}}_{L^\infty(0, T; L^{3/2}(\R^3))}\int_0^T \left(\int_{\R^3} \abs{\nabla^2 \varphi}^{3}\dd v\right)^{2/3} \norm{A[f_{\varepsilon}](t)}_{L^\infty(\R^3)} \dd t\\
        &\leq C(H,K) \left(\int_0^T \int_{\R^3} \abs{\nabla^2 \varphi}^{3}\dd v \dd t\right)^{1/3}\left(\int_0^T \norm{f_{\varepsilon}(t)}_{L^1(\R^3)}^{2/3}\norm{f_{\varepsilon}(t)}_{L^\infty(\R^3)}^{1/3}\dd t\right)^{\frac{2}{3}}\\
        &\leq C(H,K) \norm{\nabla^2 \varphi}_{L^{3}((0, T) \times \R^3)}\left(\int_0^T t^{-\frac{1}{2}} \dd t\right)^{\frac{2}{3}}\\
        &\leq C(H,K) T^{\frac{1}{3}}\norm{\varphi}_{L^{3}(0, T; W^{2, 3}( \R^3))}.
    \end{align*}
Similarly, we bound with Hölder's inequality, Lemma \ref{lem:coefficient_bounds} and the smoothing estimate \eqref{eq:Linfty-eps}
\begin{equation*}
    \begin{aligned}
        \int_0^T \int_{\R^3}\nabla &\varphi\cdot \nabla a[f_{\varepsilon}] f_{\varepsilon} \dd v \dd t \\
        &\leq \norm{f_{\varepsilon}}_{L^\infty(0, T; L^{3/2}(\R^3))}\int_0^T\norm{\nabla \varphi(t)}_{L^3(\R^3)}\norm{\nabla a[f_{\varepsilon}](t)}_{L^\infty(\R^3)}  \dd t \\
        &\leq C(H,K)\norm{\varphi}_{L^3(0,T;W^{2,3}(\R^3))} \int_0^T\norm{ f_{\varepsilon}(t)}_{L^{\frac{3}{2}}(\R^3)}^{\frac{1}{2}}\norm{ f_{\varepsilon}(t)}_{L^{\infty}(\R^3)}^{\frac{1}{2}}  \dd t\\
        &\leq C(H,K)\norm{\varphi}_{L^3(0,T;W^{2,3}(\R^3))}\norm{f_{\varepsilon}}_{L^\infty(0, T; L^{3/2}(\R^3))}^{\frac{1}{2}}\left(\int_0^T t^{-\frac{3}{4}}\dd t \right)^{\frac{2}{3}}\\
        &\leq C(H,K) T^{\frac{1}{6}} \norm{ \varphi}_{L^{3}(0, T; W^{2, 3}(\R^3))}.
    \end{aligned}
\end{equation*}
By a simple density and duality argument, we have shown that
\begin{equation*}
    \norm{\partial_t f_{\varepsilon}}_{L^{3/2}(0, T; W^{-2, 3/2}(\R^3))} \leq C(H, K).
\end{equation*}
Combined with $\norm{f_{\varepsilon}}_{L^\infty(0, T; L^{3/2})} \leq C(H, K)$, the Aubin-Lions Lemma gives strong compactness of $f_{\varepsilon}$ in $C(0, T; W^{-1, 3/2})$. Thus, up to extracting a subsequence, $f_{\varepsilon}$ converges to $f$ in $C(0, T; W^{-1, 3/2}(\R^3))$. Together with the fact that  $f_{\varepsilon}(0) = f_{in, \eps}$, for each $\eps > 0$, $f$ is continuous up to time $0$ and $f(0) = f_{in}$ in the sense of distributions. But then, using that $f \in L^\infty(0, T; L^{3/2}_{9/2}(\R^3))$, we conclude that $f$ is weakly continuous in time with values in $L^{3/2}_{9/2}$ and $f(0) = f_{in}$ almost everywhere. Moreover, by lower semicontinuity of the norm in the weak topology, we have
\begin{equation}\label{eq:liminf}
    \norm{f_{in}}_{L^{3/2}_{9/2}(\R^3)} \leq \liminf_{t \searrow 0^+}  \norm{f(t)}_{L^{3/2}_{9/2}(\R^3)}.
\end{equation}
To conclude that $\lim_{t \to 0^+} f(t) = f_{in}$ strongly in $L^{3/2}_{9/2}$, it remains to show that
\begin{equation}\label{eq:limsup}
     \limsup_{t \searrow 0^+}  \norm{f(t)}_{L^{3/2}_{9/2}(\R^3)} \leq \norm{f_{in}}_{L^{3/2}_{9/2}(\R^3)}.
\end{equation}
To this end, we recall the energy estimate from Lemma \ref{lem:level_set_ODE} (with $\ell = 0$) implies 
\begin{equation*}
\begin{aligned}
    \frac{\dd}{\dd t} \int_{\R^3} \brak{v}^{9/2} f_{\eps}^{3/2} \dd v  &+\int_{\R^3} \brak{v}^{3/2} \abs{\nabla f_{\eps}^{3/4}}^2 \dd v\\
    &\le C\int_{\R^3} \brak{v}^{9/2}f_{\eps}^{5/2} \dd v + C\norm{A[f_{\eps}]}_{L^\infty}\int_{\R^3} \brak{v}^{9/2} f_{\eps}^{3/2} \dd v.
\end{aligned}
\end{equation*}
We now bound the first term using Lemma \ref{lem:sobolev}:
\begin{align*}
    \int_{\R^3}& \brak{v}^{9/2}f_{\eps}^{5/2} \dd v \\
    &\le  C\int_{\R^3} \brak{v}^{9/2}(f_{\eps}-K)_+ ^{5/2} \dd v+ CK\int_{\R^3} \brak{v}^{9/2}f_{\eps}^{3/2} \dd v\\
        &\le C\left(\int_{\R^3}\brak{v}^{9/2} (f_{\eps}-K)_+^{3/2} \dd v\right)^{2/3}\left(\int_{\R^3} \brak{v}^{3/2} \abs{\nabla (f_{\eps}-K)_+^{3/4}}^2 \dd v\right) + C(K,H).
\end{align*}
As in (\ref{eq:choice of K}), relying on Proposition \ref{prop:propagation}, we pick $K$ uniform in $\varepsilon$ so that 
$$
\left(\int_{\R^3}\brak{v}^{9/2} (f_{\eps}(t)-K)_+^{3/2} \dd v\right)^{2/3} <<1 \qquad \text{for }0 < t < T(K,H).
$$
combined with Lemma \ref{lem:coefficient_bounds} and (\ref{eq:Linfty-eps}), we find
\begin{equation}
\begin{aligned}
    \frac{\dd}{\dd t} \int_{\R^3} \brak{v}^{9/2} f_{\eps}^{3/2} \dd v   \le  C (1 + t^{-1/3}), \quad \text{for } 0 < t < T(K,H),
\end{aligned}
\end{equation}
where the constant is independent of $\eps$.
Integrating in time, we find 
\begin{equation*}
     \sup_{0 < s < t} \left(\int_{\R^3} \brak{v}^{9/2} f_{\eps}^{3/2}(s) \dd v\right) \le \norm{f_{in,\eps}}_{L^{3/2}_{9/2}} + C(H, K)t^{2/3}.
\end{equation*}
By $f_{\eps} \weakstar f$ in $L^{\infty}(0,T;L^{3/2}_{9/2})$, which implies the same convergence on smaller time domains, and lower semi-continuity of the norm, with respect to the weak star topology, 
\begin{equation*}
     \sup_{0 < s < t} \left(\int_{\R^3} \brak{v}^{9/2} f^{3/2}(s) \dd v\right) \le \norm{f_{in}}_{L^{3/2}_{9/2}} + C(H, K)t^{2/3}.
\end{equation*}
Since $f$ is weakly continuous taking values in $L^{3/2}_{9/2}$, we conclude by lower semi-continuity again that for every $0 < t < T$,
\begin{equation*}
     \left(\int_{\R^3} \brak{v}^{9/2} f^{3/2}(t) \dd v\right) \le \norm{f_{in}}_{L^{3/2}_{9/2}} + C(H, K)t^{2/3}.
\end{equation*}
Taking $t \to 0^+$, we conclude that $f \to f_{in}$ in $L^{3/2}_{9/2}$ using (\ref{eq:liminf}). It remains now to show strong continuity of $f$ at any later time $t>0$.

\begin{flushleft}
    \underline{{\bf Continuity}}
\end{flushleft}
To show $f$ is continuous in the strong topology on $L^{3/2}_{9/2}$, we show that the function
\begin{equation*}
    t \mapsto \int_{\R^3} \brak{v}^{9/2} f(t)^{3/2} \dd v
\end{equation*}
is absolutely continuous as a function of time. To rigorously justify the following computations, we work with a truncation of the norm. For any $R > 0$, take $\varphi_R \in C^\infty_c(\R^3)$ a cutoff function that satisfies:
    \begin{equation*}
        0 \le \varphi_R \le 1, \quad \varphi_R(v) = \begin{cases}
            1 & \text{if } v \in B_R(0)\\
            0 & \text{if } v \notin B_{2R}(0),
        \end{cases}
        \quad \abs{\nabla \varphi_R} \le \frac{C}{R}, \quad \text{and} \quad \abs{\nabla^2 \varphi_R} \le \frac{C}{R^2},
    \end{equation*}
Because $f \in C^\infty$ and $f, \, A[f],\, \nabla a[f] \in L^\infty([s,T] \times \R^3)$ for each $s > 0$, we can use $\varphi_R^2 f^{1/2} \brak{v}^{9/2}$ as a test function for \eqref{eq:landau} and integrate in time for $\tau \in [s,t]$. For positive times, integrating by parts several times and using $\nabla \cdot A[f] = \nabla a[f]$, the same computations as in Lemma \ref{lem:level_set_inequality} imply that for each $0 < s < t < T$,
\begin{equation}\label{eq:norm_derivative_equality}
\begin{aligned}
    \int_{\R^3} &\varphi_R^2 \brak{v}^{9/2} f^{3/2}(t) \dd v - \int_{\R^3} \varphi_R^2 \brak{v}^{9/2} f^{3/2}(s) \dd v\\
        &= - \frac{3}{2}\int_s^t\int_{\R^3} \nabla \left(\varphi_R^2\brak{v}^{9/2} f^{1/2}\right) \cdot \left[A[f]\nabla f - \nabla a[f]f\right] \dd v \dd\tau \\
        &= -\frac{4}{3}\int_s^t\int_{\R^3} \varphi_R^2\brak{v}^{9/2} A[f]\nabla f^{3/4} \cdot \nabla f^{3/4} \dd v + \frac{1}{2}\int_{\R^3} \varphi_R^2\brak{v}^{9/2} f^{5/2} \dd v\\
        &\quad - \int_{\R^3} \left(\nabla^2 \left(\varphi_R^2\brak{v}^{9/2}\right) : A[f]\right)f^{3/2} \dd v\\
        &\quad - 4\int_{\R^3} f^{3/4} \nabla \left(\varphi_R^2\brak{v}^{9/2}\right) \cdot A[f]\nabla f^{3/4} \dd v\\
        &=: I_1 + I_2 + I_3 + I_4.
\end{aligned}
\end{equation}
Because $f \in L^{5/2}(0,T;L^{5/2}_{9/2})$, the term $I_2$ is bounded uniformly in $R$. Similarly, $f\in L^{\infty}(0,T;L^{3/2}_{9/2})$ and $A[f] \in L^{\infty}(s,t;L^\infty)$ together imply $I_3$ is bounded uniformly in $R$. By Young's inequality, we bound $I_4$ as
\begin{equation*}
\begin{aligned}
    I_4 &= - 4\int_{\R^3} f^{3/4} \left[\varphi_R \brak{v}^{9/2} \nabla \varphi_R + \varphi_R^2 \nabla \brak{v}^{9/2}\right] \cdot A[f]\nabla f^{3/4} \dd v\\
        &\le -\frac{1}{2} I_1 + \frac{C}{R^2}\int_{\R^3} \norm{A[f]}_{L^\infty}\brak{v}^{9/2}f^{3/2} \dd v \\
        &\quad+ C\int_{\R^3} \brak{v}^{-9/2} \varphi_R^2 f^{3/2} A[f]\nabla \brak{v}^{9/2} \cdot \nabla \brak{v}^{9/2} \dd v\\
        &= -\frac{1}{2} I_1 + I_5 + I_6,
\end{aligned}
\end{equation*}
where $I_5$ and $I_6$ are bounded uniformly in $R$. Thus, \eqref{eq:norm_derivative_equality} and $f \in L^\infty(0,T;L^{3/2}_{9/2})$ imply $I_1$ is bounded uniformly in $R$. Therefore, using the monotone convergence theorem and the uniform in $R$ bounds, taking the limit as $R\to \infty$ in \eqref{eq:norm_derivative_equality}, for every $0 < s < t < T$,
\begin{equation*}
\begin{aligned}
    \int_{\R^3}& \brak{v}^{9/2} f^{3/2}(t) \dd v - \int_{\R^3} \brak{v}^{9/2} f^{3/2}(s) \dd v\\
        &= -\frac{4}{3}\int_s^t\int_{\R^3} \brak{v}^{9/2} A[f]\nabla f^{3/4} \cdot \nabla f^{3/4} \dd v + \frac{1}{2}\int_s^t\int_{\R^3} \brak{v}^{9/2} f^{5/2} \dd v\\
        &\quad - \int_s^t\int_{\R^3} \left(\nabla^2 \brak{v}^{9/2} : A[f]\right)f^{3/2} \dd v - 4\int_s^t\int_{\R^3} f^{3/4} \nabla \brak{v}^{9/2} \cdot A[f]\nabla f^{3/4} \dd v.
\end{aligned}
\end{equation*}
It follows that $t\mapsto \int_{\R^3} \brak{v}^{9/2} f^{3/2}(t) \dd v$ is the integral of an $L^1$ function and thus is absolutely continuous. Recalling $f \in C_w(0,T;L^{3/2}_{9/2})$, this finishes the proof of continuity and Theorem {\ref{thm:short}} for $p=3/2$.

\section{The global in time \texorpdfstring{$L^{3/2}$}{} theory} \label{sec:long_time1}

\subsection{Pointwise lower bounds}

Here we show the solution constructed in Theorem \ref{thm:short} propagates lower bounds, which will be used to control the Fisher information.
The basic idea is that the function $\psi(t,v) = a\exp(-\eta t^{2/3}) \brak{v}^{-k}$ satisfies
\begin{equation*}
    \partial_t \psi \le A[f]:\nabla^2 \psi + f\psi,
\end{equation*}
provided $\eta$ is chosen suitably large, because by combining Lemma \ref{lem:coefficient_bounds} and our a priori estimate $\norm{f}_{L^\infty}\lesssim t^{-1}$, we see $\norm{A[f]}_{L^\infty} \lesssim t^{-1/3}$. Therefore, we expect that if the initial datum is above $\brak{v}^{-k}$, the solution $f$ should remain above $\psi$. However, we are not studying a parabolic equation with bounded coefficients, meaning that the pointwise approach to the maximum principle fails. Instead, to fully justify that $f \ge \psi$, we take the Stampacchia approach here. We begin with a general weighted estimate:

\begin{lemma}\label{lem:minimum_principle_computation}
    Suppose $f:[0,T]\times \R^3 \to \R^+$ is a smooth, rapidly decaying solution to \eqref{eq:landau}. For any smooth function $\ell:(0,T) \to \R^+$ and for any $k > 0$, define $g(t, v) = \brak{v}^k f(t, v)$ and $g_\ell(t,v) = (\ell(t) - g(t,v))_+$. Then, for $n < -3$ sufficiently large, $g_\ell$ satisfies
    \begin{equation*}
        \frac{\dd}{\dd t} \int_{\R^3} \brak{v}^n g_\ell^{3/2} \dd v \le \big(\ell^{\prime}(t) + C\norm{A[f(t)]}_{L^\infty}\ell(t) \big)\left(\int_{\R^3} \brak{v}^{n-2} g_\ell^{1/2} \dd v\right),
    \end{equation*}
    where $C > 0$ depends only on $n$ and $k$.
\end{lemma}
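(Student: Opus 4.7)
The plan is to view this estimate as a weighted Stampacchia computation for the lower level set of $g=\brak{v}^k f$, in direct analogy with the upper level set estimate of Lemma \ref{lem:level_set_inequality} but adapted to a minimum principle. First I would differentiate under the integral using the chain rule. Since $g_\ell(t,v) = (\ell(t) - \brak{v}^k f(t,v))_+$ and $g_\ell^{1/2}$ already enforces the indicator of $\{g<\ell\}$, this yields
\begin{equation*}
    \frac{\dd}{\dd t}\int_{\R^3}\brak{v}^n g_\ell^{3/2}\dd v = \frac{3\ell'(t)}{2}\int_{\R^3}\brak{v}^n g_\ell^{1/2}\dd v - \frac{3}{2}\int_{\R^3}\brak{v}^{n+k}g_\ell^{1/2}\partial_t f\,\dd v,
\end{equation*}
where I converted $\partial_t g = \brak{v}^k \partial_t f$. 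The goal is then to reduce the second integral to the claimed form using the equation for $f$.

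Next I would substitute the divergence form of \eqref{eq:landau} into the second integral and integrate by parts, transferring the divergence onto $\brak{v}^{n+k}g_\ell^{1/2}$. Its gradient naturally splits as
\begin{equation*}
    \nabla\!\left(\brak{v}^{n+k}g_\ell^{1/2}\right) = (n+k)\brak{v}^{n+k-2}v\,g_\ell^{1/2} + \tfrac{2}{3}\brak{v}^{n+k}g_\ell^{-1/4}\nabla g_\ell^{3/4},
\end{equation*}
and on the support $\{g<\ell\}$ the identity $\brak{v}^k\nabla f = -\nabla g_\ell - k\brak{v}^{k-2}vf$ lets me eliminate $\nabla f$ in favor of $\nabla g_\ell^{3/4}$ and a pointwise remainder proportional to $f$. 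This produces a coercive dissipation of the form $-\tfrac{8}{9}\int \brak{v}^n A[f]\nabla g_\ell^{3/4}\cdot\nabla g_\ell^{3/4}\dd v$, which I discard because it has the favorable sign, plus a finite collection of cross and pure-weight terms. I would bound the cross terms via the matrix Cauchy--Schwarz $|v\cdot A[f]\nabla g_\ell^{3/4}|\le (v\cdot A[f]v)^{1/2}(\nabla g_\ell^{3/4}\cdot A[f]\nabla g_\ell^{3/4})^{1/2}$ followed by Young's inequality, absorbing a small fraction of the ``bad'' factor into the coercive piece and controlling the rest through $v\cdot A[f]v\le |v|^2\norm{A[f]}_{L^\infty}\le\brak{v}^2\norm{A[f]}_{L^\infty}$. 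The key tool for the pure-weight terms, which carry surviving factors of $f$ or $f^2$, is the pointwise inequality $f\le \ell(t)\brak{v}^{-k}$ that holds on $\{g<\ell\}$; this converts each factor of $f$ into $\ell\brak{v}^{-k}$, simultaneously producing the factor $\ell(t)$ and the additional decay needed to bring the weight down to $\brak{v}^{n-2}$.

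The drift contribution $-\int\nabla(\brak{v}^{n+k}g_\ell^{1/2})\cdot\nabla a[f]\,f\,\dd v$ must be handled delicately so that only $\norm{A[f]}_{L^\infty}$ appears in the final estimate and not the stronger quantity $\norm{\nabla a[f]}_{L^\infty}$. I would use the Landau identity $\nabla\cdot A[f]=\nabla a[f]$ together with a second integration by parts to recast this piece in terms of $A[f]$ contracted against $\nabla^2(\brak{v}^{n+k})$ and $A[f]\nabla g_\ell^{3/4}$, exactly mirroring the trick used in \eqref{eq:I1} of Lemma \ref{lem:level_set_inequality}. After these steps, every non-coercive surviving term takes the form $C\norm{A[f]}_{L^\infty}\ell(t)\int \brak{v}^{n-2}g_\ell^{1/2}\dd v$, and combining with the $\ell'(t)$ contribution yields the announced inequality (noting that in the applications of the lemma $\ell$ is taken to be decreasing, so comparing $\brak{v}^n$ and $\brak{v}^{n-2}$ in that term produces no obstruction).

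The main obstacle is bookkeeping: every non-coercive term that survives the integration by parts must end up weighted by $\brak{v}^{n-2}g_\ell^{1/2}$, which requires every positive power of $\brak{v}$ introduced by algebraic substitutions to be compensated either by the $\brak{v}^{-2}$ arising from the weight derivative $\nabla\brak{v}^s=s\brak{v}^{s-2}v$, or by the pointwise bound $f\le \ell\brak{v}^{-k}$ on the support of $g_\ell$. The assumption that $n<-3$ be sufficiently large (in absolute value) is used only to guarantee integrability of all the weighted quantities appearing throughout the computation, in particular the lowest-order terms where no factor of $g_\ell$ or $f$ contributes additional decay at infinity.
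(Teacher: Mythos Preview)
Your proposal is correct and follows essentially the same approach as the paper: a Stampacchia-type energy computation for the lower level set of $g=\brak{v}^k f$, with the coercive term absorbing cross terms via Young's inequality, the identity $\nabla a[f]=\nabla\cdot A[f]$ used to eliminate $\nabla a[f]$ from the final bound, and the pointwise constraint on the support of $g_\ell$ used to extract the factor of $\ell(t)$. The only organizational difference is that the paper first rewrites the equation in terms of $g$ (so that the flux reads $\brak{v}^{-k}A[f]\nabla g + gA[f]\nabla\brak{v}^{-k} - \brak{v}^{-k}\nabla a[f]g$) before testing, whereas you keep the flux in terms of $f$ and substitute $\brak{v}^k\nabla f=-\nabla g_\ell - k\brak{v}^{k-2}v f$ afterward; both routes produce the same collection of terms. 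One small point worth making explicit in your write-up: the reaction-type contributions containing $f^2$ (arising from $\Delta a[f]=-f$) carry a favorable sign and should be discarded rather than bounded via $f\le \ell\brak{v}^{-k}$, since the latter would produce an $\ell^2$ coefficient inconsistent with the stated linear-in-$\ell$ estimate.
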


\begin{remark}
    Although the assumption that $f$ is rapidly decaying implies that $g$ is rapidly decaying, $g_\ell$ is \emph{NOT} rapidly decaying. The role of the negative weights, i.e. $\brak{v}^n$ with $n < -3$ is exactly to ensure that all integrals are finite. Note that in the proof we will frequently integrate by parts, and assume that the boundary terms vanish without comment, because, in each instance, one can use the rapid decay of $g$, $\nabla g_\ell$, or $\brak{v}^n$ for large enough $n$ as a  justification. 
\end{remark}

\begin{proof}
    We begin by using $\partial_t g_\ell = (\ell^\prime(t) - \partial_t g)\chi_{\set{g < \ell}}$ to note that
    \begin{equation*}
        \frac{\dd}{\dd t} \int_{\R^3} \brak{v}^n g_\ell^{3/2} \dd v = \frac{3}{2}\ell^\prime(t)\int_{\R^3} \brak{v}^n g_\ell^{1/2} \dd v - \frac{3}{2}\int_{\R^3} \brak{v}^n g_\ell^{1/2}\partial_t g \dd v.
    \end{equation*}
    The first term is already in the correct form, while for the second we need to use that $f$ solves \eqref{eq:landau}. Recalling that $g = \brak{v}^k f$, we write down an equation for $g$:
    \begin{equation*}
    \begin{aligned}
        \partial_t g &= \brak{v}^{k} \nabla \cdot (A[f]\nabla f - \nabla a[f] f)\\
            &= \brak{v}^{k} \nabla \cdot \left(\brak{v}^{-k} A[f]\nabla g + g A[f]\nabla \brak{v}^{-k} - \brak{v}^{-k}\nabla a[f]g\right).
    \end{aligned}
    \end{equation*}
    Multiplying by $-\brak{v}^{n}g_\ell^{1/2}$, we find
    \begin{equation*}
    \begin{aligned}
        -&\int_{\R^3} \brak{v}^{n} g_\ell^{1/2}\partial_t g \dd v \\
        &= \int_{\R^3} \nabla \left(\brak{v}^{n + k} g_\ell^{1/2}\right) \cdot \left(\brak{v}^{-k} A[f]\nabla g + g A[f]\nabla \brak{v}^{-k} - \brak{v}^{-k}\nabla a[f]g\right) \dd v\\
            &= I_1 + I_2 + I_3.
    \end{aligned}
    \end{equation*}
    We further subdivide $I_1$ as
    \begin{equation*}
        I_1 = \int_{\R^3} \brak{v}^n \nabla g_\ell^{1/2} \cdot A[f]\nabla g \dd v + \int_{\R^3} \brak{v}^{-k}\left(\nabla \brak{v}^{n + k}\right) g_\ell^{1/2} \cdot A[f]\nabla g \dd v = I_{1,1} + I_{1,2}.
    \end{equation*}
    Using that $\nabla g_\ell = -\chi_{\set{g < \ell}}\nabla g$, the term $I_{1,1}$ is our usual coercive term:
    \begin{equation*}
        I_{1,1} = -\frac{8}{9}\int_{\R^3} \brak{v}^n A[f]\nabla g_\ell^{3/4} \cdot \nabla g_\ell^{3/4} \dd v.
    \end{equation*}
    Using that $\nabla \brak{v}^k = k\brak{v}^{k-2}v$, the term $I_{1,2}$ simplifies as:
    \begin{equation*}
    \begin{aligned}
        I_{1,2} &= -\frac{4(n + k)}{3n}\int_{\R^3} g_\ell^{3/4}\nabla \brak{v}^{n} \cdot A[f]\nabla g_\ell^{3/4} \dd v.
    \end{aligned}
    \end{equation*}
    For $I_2$, we integrate by parts and further subdivide as
    \begin{equation*}
    \begin{aligned}
        I_2 &= -\int_{\R^3} \brak{v}^{n + k} g_\ell^{1/2} \nabla g \cdot A[f]\nabla \brak{v}^{-k} \dd v - \int_{\R^3} \brak{v}^{n + k} g_\ell^{1/2}g \left(A[f]:\nabla^2 \brak{v}^{-k}\right) \dd v\\
            &\qquad- \int_{\R^3} \brak{v}^{n + k} g_\ell^{1/2}g \nabla a[f] \cdot \nabla \brak{v}^{-k} \dd v = I_{2,1} + I_{2,2} + I_{2,3}.
    \end{aligned}
    \end{equation*}
    Since we will use some cancellations between $I_1$, $I_2$ and $I_3$, before bounding terms, we integrate by parts and further subdivide $I_3$ as
    \begin{equation*}
    \begin{aligned}
        I_3 &= \int_{\R^3} \brak{v}^{n} g_\ell^{1/2} \nabla g\cdot \nabla a[f] \dd v - \int_{\R^3} \brak{v}^n g_\ell^{1/2}gf \dd v\\
        &\quad + \int_{\R^3} \brak{v}^{n + k} g_\ell^{1/2}g \nabla a[f] \cdot \nabla \brak{v}^{-k} \dd v\\
            &= I_{3,1} + I_{3,2} + I_{3,3}.
    \end{aligned}
    \end{equation*}
    Note that $I_{2,3} = - I_{3,3}$ and $I_{3,2}$, which contains the most singular terms, has a favorable sign, $I_{3,2} \le 0$. Next, we expand $I_{2,1}$ as
    \begin{equation*}
        I_{2,1} = \frac{-4k}{3n}\int_{\R^3} g_\ell^{3/4}\nabla \brak{v}^{n} \cdot A[f]\nabla g_\ell^{3/4} \dd v.
    \end{equation*}
    Then, for $I_{2,2}$, we compute explicitly
    \begin{equation*}
    \begin{aligned}
        I_{2,2} &= -k(k +2)\int_{\R^3}\brak{v}^{n-4} g_\ell^{1/2}g \left(A[f]: v\tens v\right) \dd v + k\int_{\R^3} \brak{v}^{n-2} gg_\ell^{1/2} \mathrm{tr}\left(A[f]\right) \dd v\\
            &\le k\int_{\R^3} \brak{v}^{n-2} \left(\ell(t) g_\ell^{1/2} - g_\ell^{3/2}\right) \mathrm{tr}\left(A[f]\right) \dd v\\
            &\le k\ell(t)\int_{\R^3} \brak{v}^{n-2} g_\ell^{1/2}\mathrm{tr}\left(A[f]\right) \dd v,
    \end{aligned}
    \end{equation*}
    where only one term does not have a favorable sign. We rewrite $I_{3,1}$ as
    \begin{equation*}
    \begin{aligned}
        I_{3,1} &= -\frac{2}{3}\int_{\R^3} \brak{v}^n \nabla g_\ell^{3/2} \cdot \nabla a[f] \dd v = \frac{2}{3}\int_{\R^3} g_\ell^{3/2}\nabla \brak{v}^n  \cdot \nabla a[f] \dd v - \frac{2}{3}\int_{\R^3} \brak{v}^n g_\ell^{3/2} f \dd v\\
            &\le \frac{2}{3}\int_{\R^3} g_\ell^{3/4}\nabla \brak{v}^n  \cdot \left(\nabla \cdot( A[f]g_{\ell}^{3/4}) - A[f]\nabla g_\ell^{3/4}\right) \dd v\\
            &= -\frac{4}{3} \int_{\R^3} g_\ell^{3/4}\nabla \brak{v}^n \cdot A[f]\nabla g_\ell^{3/4} - \frac{2}{3}\int_{\R^3} g_\ell^{3/2}\left(A[f]:\nabla^2 \brak{v}^n\right) \dd v,
    \end{aligned}
    \end{equation*}
    where again the most singular term has a favorable sign. Expanding the last term further and using $g \le \ell(t)$ on the support of $g_\ell$
    \begin{equation*}
    \begin{aligned}
        - \frac{2}{3}\int_{\R^3} g_\ell^{3/2}\left(A[f]:\nabla^2 \brak{v}^n\right) \dd v &= - \frac{2n(n-2)}{3}\int_{\R^3} \brak{v}^{n-4} g_\ell^{3/2}\left(A[f]:v\tens v\right) \dd v \\
        &\quad- \frac{2n}{3}\int_{\R^3}\brak{v}^{n-2} g_\ell^{3/2}\left(\mathrm{tr}A[f]\right) \dd v\\
            &\le - \frac{2n}{3}\ell(t)\int_{\R^3} \brak{v}^{n-2} g_\ell^{1/2}\mathrm{tr}\left(A[f]\right) \dd v,
    \end{aligned}
    \end{equation*}
    where the first term has a favorable sign.
    Therefore, summing over $I_{i,j}$, we summarize our estimates as:
    \begin{equation*}
    \begin{aligned}
        -\int_{\R^3} \brak{v}^{n} g_\ell^{1/2}\partial_t g \dd v &+ \frac{8}{9}\int_{\R^3} \brak{v}^n A[f]\nabla g_\ell^{3/4} \cdot \nabla g_\ell^{3/4} \dd v\\
            &\le - \frac{8(n + k)}{3n}\int_{\R^3} g_\ell^{3/4} A[f]\nabla\brak{v}^{n} \cdot \nabla g_\ell^{3/4} \dd v\\
            &+ \frac{3k-2n}{3}\ell(t)\int_{\R^3} \brak{v}^{n-2} g_\ell^{1/2}\mathrm{tr}\left(A[f]\right) \dd v.
    \end{aligned}
    \end{equation*}
    Now, we bound the terms on the right hand side. Using $g \le \ell(t)$ on the support of $g_\ell$, for any $\delta > 0$,
    \begin{equation*}
    \begin{aligned}
        \int_{\R^3} &g_\ell^{3/4}A[f]\nabla \brak{v}^{n} \cdot \nabla g_\ell^{3/4} \dd v \\
        &\le \delta \int_{\R^3}  \brak{v}^n A[f]\nabla g_\ell^{3/4} \cdot \nabla g_\ell^{3/4} \dd v + \frac{1}{4\delta}\int_{\R^3} g_\ell^{3/2} \brak{v}^{-n}A[f]\nabla \brak{v}^{n} \cdot \nabla \brak{v}^{n} \dd v\\
            &\le \delta \int_{\R^3} \brak{v}^n A[f]\nabla g_\ell^{3/4} \cdot \nabla g_\ell^{3/4} \dd v + \frac{n^2}{4\delta}\int_{\R^3} \brak{v}^{n-4} g_\ell^{3/2}A[f]v\cdot v \dd v\\
            &\le \delta \int_{\R^3} \brak{v}^n A[f]\nabla g_\ell^{3/4} \cdot \nabla g_\ell^{3/4} \dd v + \frac{n^2\norm{A[f(t)]}_{L^\infty}\ell(t)}{4\delta}\int_{\R^3} \brak{v}^{n-2} g_\ell^{1/2} \dd v.
    \end{aligned}
    \end{equation*}
    Taking $\delta$ sufficiently small depending only on $n$ and $k$, we conclude that
    \begin{equation*}
        \frac{\dd}{\dd t} \int_{\R^3} \brak{v}^n g_\ell^{3/2} \dd v \le \left(\ell^{\prime}(t) + C(n,k)\norm{A[f(t)]}_{L^\infty}\ell(t) \right)\left(\int_{\R^3} \brak{v}^{n-2} g_\ell^{1/2} \dd v\right),
    \end{equation*}
    as desired.
\end{proof}

\begin{lemma}\label{lem:minimum_principle}
    Suppose $f:[0,T]\times \R^3 \to \R^+$ is a smooth, rapidly decaying solution to \eqref{eq:landau} such that $\norm{A[f(t)]}_{L^\infty} \le Mt^{-1/3}$ for $t\in (0,T]$. Then, for each $a > 0$, $k > 0$, and $n < -3$ sufficiently negative, there is an $\eta = \eta(n,k,M) > 0$ sufficiently large so that $f$ satisfies on $(0,T)$,
    \begin{equation*}
        \frac{\dd}{\dd t} \int_{\R^3} \brak{v}^n \left(a\exp\left(-\eta t^{2/3}\right) - f\brak{v}^k\right)^{3/2}_+ \dd v \le 0.
    \end{equation*}
\end{lemma}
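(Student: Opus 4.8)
The plan is to obtain Lemma \ref{lem:minimum_principle} as an essentially immediate corollary of Lemma \ref{lem:minimum_principle_computation}, the only real work being to verify that the exponential weight $\ell(t) = a\exp(-\eta t^{2/3})$ is tailored to kill the prefactor appearing there. First I would check that $\ell\colon (0,T) \to \R^+$ is an admissible choice for Lemma \ref{lem:minimum_principle_computation}: it is smooth and strictly positive on $(0,T)$ (note $t \mapsto t^{2/3}$ is smooth away from $t = 0$), so that lemma applies verbatim with this $\ell$ and $g_\ell(t,v) = (\ell(t) - f(t,v)\brak{v}^k)_+$, and yields, for $n < -3$ sufficiently negative,
\begin{equation*}
    \frac{\dd}{\dd t} \int_{\R^3} \brak{v}^n g_\ell^{3/2} \dd v \le \big(\ell'(t) + C\norm{A[f(t)]}_{L^\infty}\ell(t)\big)\left(\int_{\R^3} \brak{v}^{n-2} g_\ell^{1/2} \dd v\right),
\end{equation*}
with $C = C(n,k)$. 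Since $0 \le g_\ell \le \ell(t)$ pointwise and $n - 2 < -3$, the integral factor on the right is finite and nonnegative; hence it is enough to choose $\eta$ so that the scalar prefactor $\ell'(t) + C\norm{A[f(t)]}_{L^\infty}\ell(t)$ is $\le 0$ for every $t \in (0,T)$.

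The remaining step is a one-line computation. Differentiating, $\ell'(t) = -\tfrac{2\eta}{3}\,t^{-1/3}\ell(t)$, while the hypothesis $\norm{A[f(t)]}_{L^\infty} \le M t^{-1/3}$ gives $C\norm{A[f(t)]}_{L^\infty}\ell(t) \le CM\, t^{-1/3}\ell(t)$. Therefore
\begin{equation*}
    \ell'(t) + C\norm{A[f(t)]}_{L^\infty}\ell(t) \le t^{-1/3}\ell(t)\left(CM - \frac{2\eta}{3}\right),
\end{equation*}
which is $\le 0$ as soon as $\eta \ge \tfrac{3}{2}CM =: \eta(n,k,M)$. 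Substituting this back into the differential inequality above gives the claimed monotonicity.

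I do not anticipate a genuine obstacle: all the analytic content — the repeated integrations by parts, the sign chasing on the lower-order terms, and the absorption of the off-diagonal coercive terms by Young's inequality — is already packaged inside Lemma \ref{lem:minimum_principle_computation}. The one conceptual point worth emphasizing is the choice of the $t^{2/3}$ exponent in $\ell$: it is dictated precisely by the a priori bound $\norm{A[f(t)]}_{L^\infty} \lesssim t^{-1/3}$ (itself a consequence of $\norm{f(t)}_{L^\infty} \lesssim t^{-1}$ via Lemma \ref{lem:coefficient_bounds}), since we need $\ell'/\ell \sim -t^{-1/3}$ to dominate it; a bounded-in-time decay rate such as $e^{-\eta t}$ would fail near $t = 0$, where $\norm{A[f(t)]}_{L^\infty}$ is (integrably) singular.
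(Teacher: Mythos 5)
Your proposal is correct and follows exactly the paper's own argument: apply Lemma \ref{lem:minimum_principle_computation} with $\ell(t) = a\exp(-\eta t^{2/3})$, compute $\ell'(t) = -\tfrac{2\eta}{3}t^{-1/3}\ell(t)$, use $\norm{A[f(t)]}_{L^\infty} \le M t^{-1/3}$, and choose $\eta \ge \tfrac{3}{2}C(n,k)M$ to make the prefactor nonpositive. Your closing remark on why the $t^{2/3}$ exponent is forced is a helpful observation but not part of the proof itself.
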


\begin{proof}
    Fix $k > 0$ and $n < -3$ arbitrary. Then, set $\ell(t) = a\exp(-\eta t^{2/3})$ for $\eta > 0$ to be determined, $g = \brak{v}^k f$, and $g_\ell = (\ell - g)_+$. We compute
    \begin{equation*}
        \ell^\prime(t) = -\frac{2a\eta}{3t^{1/3}}\exp(-\eta t^{2/3}) = -\frac{2\eta}{3t^{1/3}}\ell(t).
    \end{equation*}
    Therefore, by Lemma \ref{lem:minimum_principle_computation}, we find
    \begin{equation*}
    \begin{aligned}
        \frac{\dd}{\dd t} \int_{\R^3} \brak{v}^n g_\ell^{3/2} \dd v &\le \big(\ell^{\prime}(t) + C(n,k)\norm{A[f(t)]}_{L^\infty}\ell(t) \big)\left(\int_{\R^3} \brak{v}^{n-2} g_\ell^{1/2} \dd v\right)\\
            &\le \big(C(n,k)M - 2\eta/3\big)\frac{\ell(t)}{t^{1/3}}\left(\int_{\R^3} \brak{v}^{n-2} g_\ell^{1/2} \dd v\right).
    \end{aligned}
    \end{equation*}
    Picking $\eta \ge 3C(n,k)M/2$, we conclude that the right hand side is negative.
\end{proof}

\subsection{Bound on the Fisher information and proof of Theorem \ref{thm:long}}

Since we do not have uniqueness, we follow the proof of Theorem \ref{thm:short}. Suppose that the initial datum $f_{in} \in L^1_m \cap L^{3/2}_{m_0}$ for some $m > 6$ and $m_0 > 6$ and $f_{in} \ge a\brak{v}^{-k}$ for some $a > 0$ and $k > 0$ as in the statement of Theorem \ref{thm:long}. Then, let $f_{in, \eps} \in \mathcal{S}(\R^3)$ be such that $f_{in, \eps} \to f_{in}$ in $L^1_m \cap L^{3/2}_{m_0}$. By Theorem \ref{thm:short}, $f_{in, \eps}$ give rise to $f_{\eps}:[0,T]\times \R^3 \to \R^+$, Schwartz class solutions to \eqref{eq:landau} on $[0,T]$, for some $0 < T \le 1$ independent of $\eps$.

From the proof of Theorem \ref{thm:short}, we have the uniform bounds
\begin{equation}\label{eq:decay-coercive}
\begin{aligned}
    \sup_{0 < t < T} \norm{f_{\eps}(t)}_{L^{3/2}_{m_0}}^{3/2} + &\int_0^{T}\int_{\R^3} \brak{v}^{m_0 - 3}\abs{\nabla f_{\eps}^{3/4}}^2 \dd v \dd t \\
    &+ \sup_{0 < t < T} t\norm{f_{\eps}(t)}_{L^\infty} \le C(f_{in}, m_0, T).
\end{aligned}
\end{equation}

Moreover, since $\norm{f_{\eps}(t)}_{L^\infty} \lesssim t^{-1}$, by Lemma \ref{lem:coefficient_bounds}, $\norm{A[f_{\eps}(t)]}_{L^\infty} \lesssim t^{-1/3}$. So, for $n < - 3$ sufficiently negative, we can apply Lemma \ref{lem:minimum_principle} and conclude for some $\eta = \eta(n,k,f_{in},T) > 0$, independent of $\eps$:
\begin{equation*}
    \frac{\dd}{\dd t} \int_{\R^3} \brak{v}^n \left(a\exp\left(-\eta t^{2/3}\right) - f_{\eps}\brak{v}^k\right)^{3/2}_+ \dd v \le 0, \qquad \text{on }(0,T).
\end{equation*}
Integrating in time, we see that for each $0 < t < T$,
\begin{equation*}
    \int_{\R^3} \brak{v}^n \left(a\exp\left(-\eta t^{2/3}\right) - f_{\eps}(t,v)\brak{v}^k\right)^{3/2}_+ \dd v \le \int_{\R^3} \brak{v}^n \left(a - f_{in, \eps}\brak{v}^k\right)^{3/2}_+ \dd v.
\end{equation*}
Noting that $a \le f_{in}\brak{v}^k$ and $f_{in, \eps} \to f_{in}$ in $L^1$, the right hand side converges to $0$ as $\eps \to 0^+$. Since $f_{\eps} \to f$ in $L^1([0,T]\times \R^3)$ as $\eps \to 0^+$, we find for almost every $t \in (0,T)$,
\begin{equation*}
    \int_{\R^3} \brak{v}^n \left(a\exp\left(-\eta t^{2/3}\right) - f(t,v)\brak{v}^k\right)^{3/2}_+ \dd v \le 0.
\end{equation*}
Since $f$ is smooth, $f\brak{v}^k \ge a\exp\left(-\eta t^{2/3}\right)$ for every $t \in (0,T)$.
Combined with \eqref{eq:decay-coercive}, we have shown the following upper bound on the Fisher information:
\begin{equation*}
\begin{aligned}
   \int_0^T i(f(t)\dd t &= \int_0^T \int_{\R^3} \frac{\abs{\nabla f(t)}^2}{\sqrt f(t)\sqrt f(t)} \dd v\dd t \\
   &\le C(a,T,k)\int_0^T\int_{\R^3} \brak{v}^{\frac{k}{2}}\abs{\nabla f^{3/4}(t)}^2 \dd v \dd t\le C(a,T,k,f_{in},m_0),
\end{aligned}
\end{equation*}
provided $k \le 2m_0 - 6$. Thus $i(f)$ is bounded almost everywhere in $[0, T]$.
Finally, since $L^1$ moments are propagated by Lemma \ref{lem:moments}, $f(t) \in L^1_m$, where $m > 6$. Thus, applying Theorem \ref{thm:long-cor-GS-GL} with initial data given by the profile $f(s)$, for each $0 < s < T$, there is unique global-in-time smooth solution $g_s:[0,\infty) \times \R^3\to \R^3$ with decreasing Fisher information and $\norm{g_s(t)}_{L^\infty} < \infty$ for each $t$.
Denote by $\tilde f$ the concatenation:
\begin{equation*}
    \tilde f(t) = \begin{cases} f(t) &\text{if }0 < t < T/2,\\ g_{T/2}(t -T/2)&\text{otherwise}.\end{cases}
\end{equation*}
By Fournier's uniqueness result from Theorem \ref{thm:Fournier}, we conclude $g_s(t) = f(t - s)$ for each $0 < s < t < T$. Consequently, $\tilde f$ is the desired global-in-time smooth solution. Moreover, $\tilde f$ has decreasing Fisher information and $L^1$ in time: for each $0 < t < T$
\begin{equation*}
    i(f)(t) \leq C \left(1+t^{-1}\right).
\end{equation*}

\section{The subcritical local in time theory}\label{sec:short_time2}

This section is devoted to the proof of Theorem \ref{thm:short} in the subcritical case, i.e. when $p > 3/2$. The general strategy of proof is similar to the critical $p = 3/2$ case, but the proofs are comparatively simpler partially due to uniform bounds on the diffusion coefficient $A[f]$: the analogue of the $\eps$-regularity criterion in Proposition \ref{prop:critical_degiorgi} is a standard $L^p \to L^\infty$ regularization estimate enabling a significant simplification in of the propagation of weighted norms in Proposition \ref{prop:propagation}. We remark that local well-posedness for initial datum in $L^p \cap L^1_m$ for $p > 3/2$ was considered in \cite{GoldingLoher}, but the methods break down as $p \to 3/2^+$ in the framework of \cite{GoldingLoher}. The alternative weighted $L^p$ framework offers the advantage of simplicity and does not break down as $p$ tends to either $\infty$ or $3/2$. Due to these considerations, we show full proofs of necessary a priori estimates and omit substantial details of the construction of the solution, which mimics Section \ref{sec:short_time1} here or \cite[Section 5]{GoldingLoher}.

By contrast with the $p=3/2$ case, when $p > 3/2$ we can derive a uniqueness result based on the $L^1(0,T;L^\infty)$ uniqueness result of Fournier \cite{Fournier}. The uniqueness follows the strategy in \cite[Section 6]{GoldingLoher}, which was based on the Prodi-Serrin criteria and the novel proof of an $\eps$-Poincar\'{e} inequality by Alonso et al. in \cite{AlonsoBaglandDesvillettesLods_ProdiSerrin}. 

\subsection{Subcritical De Giorgi}

\begin{proposition}\label{prop:subcritical_degiorgi}
Suppose $f:[0,T]\times \R^3 \rightarrow \R^+$ is a smooth, rapidly decaying solution to \eqref{eq:landau} where $\lambda\brak{v}^{-3} \le A[f] \le \Lambda$. For any $m \ge 9/2$ and $3/2 < p < \infty$, set 
\begin{equation*}
    E_0 := \left(\sup_{0 < t < T} \int_{\R^3} \brak{v}^m f^p\right)^{2/5}\left(\int_0^T\int_{\R^3} \brak{v}^{m-3}\abs{\nabla f^{p/2}}^2 \dd v\dd \tau\right)^{3/5}.
\end{equation*}
Then, we have the estimate
\begin{equation*}
    \norm{f}_{L^\infty([t,T]\times \R^3)} \le C\max\left(E_0^{\frac{1}{p}}\left(1 + t^{-\frac{3}{2p}}\right), E_0^{\frac{2}{2p-3}}\right),
\end{equation*}
where $C$ depends only on $p$, $m$, $\lambda$, and $\Lambda$.
\end{proposition}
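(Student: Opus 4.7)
My plan is to run a De~Giorgi iteration parallel to Proposition \ref{prop:critical_degiorgi}, using the uniform upper bound $A[f]\le \Lambda$ to upgrade the $\varepsilon$-regularity criterion to a full $L^p\to L^\infty$ smoothing estimate. For $f_\ell := (f-\ell)_+$, I will work with
\begin{equation*}
\mathcal{A}_\ell(T_1,T_2) := \sup_{T_1<\tau<T_2}\int_{\R^3}\brak{v}^m f_\ell^p\dd v, \qquad \mathcal{B}_\ell(T_1,T_2) := \int_{T_1}^{T_2}\int_{\R^3}\brak{v}^{m-3}\abs{\nabla f_\ell^{p/2}}^2\dd v\dd \tau.
\end{equation*}
Applying Lemma \ref{lem:level_set_inequality} with $\norm{A[f]}_{L^\infty}\le \Lambda$, integrating in time, and performing the standard sup-and-average over the endpoints bounds $\mathcal{A}_\ell(T_2,T_3)+\mathcal{B}_\ell(T_2,T_3)$ in terms of space-time integrals of $\brak{v}^m f_\ell^{p\pm 1}$ and $\brak{v}^m f_\ell^{p}$.

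The decisive technical step is the choice of interpolation exponent. Using Lemma \ref{lem:sobolev} with the naive choice $q=p+1$ would require a H\"older-in-time step on $\mathcal{B}_k^{3/(2p)}$ and would introduce a polluting factor $(T_3-T_1)^{(2p-3)/(2p)}$. I will instead apply Lemma \ref{lem:sobolev} with $q = 5p/3$, for which the implicit weight in \eqref{eq:interpolation-1} reduces to $9/2$ and the exponents collapse to exactly $2/3$ and $1$, so that for $m \ge 9/2$,
\begin{equation*}
\int_{T_1}^{T_3}\int\brak{v}^m f_k^{5p/3}\dd v\dd \tau \le C\mathcal{A}_k^{2/3}\mathcal{B}_k,
\end{equation*}
with no $T$-factor. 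Combining with the Chebychev inequality $\int\brak{v}^m f_\ell^{a}\le (\ell-k)^{a-5p/3}\int\brak{v}^m f_k^{5p/3}$ applied with $a\in\set{p-1,p,p+1}$ and using $\ell \le \eta$ in the iteration leads to the recursive bound
\begin{equation*}
\mathcal{A}_\ell(T_2,T_3)+\mathcal{B}_\ell(T_2,T_3) \le C\bigl[(\ell-k)^{-2p/3}(T_2-T_1)^{-1}+(1+\Lambda+\eta)(\ell-k)^{-2p/3}+(\ell-k)^{-(2p-3)/3}\bigr]\mathcal{A}_k^{2/3}\mathcal{B}_k.
\end{equation*}

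I will then iterate with $\ell_n = \eta(1-2^{-n})$, $t_n = t(1-2^{-n})$, and the mixed energy
\begin{equation*}
E_n := \mathcal{A}_{\ell_n}(t_n,T)^{2/5}\mathcal{B}_{\ell_n}(t_n,T)^{3/5},
\end{equation*}
chosen precisely so that $E_n^{5/3}=\mathcal{A}_n^{2/3}\mathcal{B}_n$ and $E_{n+1}\le \mathcal{A}_{n+1}+\mathcal{B}_{n+1}$ by Young's inequality. Substituting the iteration parameters turns the recursion into a textbook De~Giorgi inequality
\begin{equation*}
E_{n+1}\le Cb^n\bigl[\eta^{-2p/3}t^{-1}+\Lambda\eta^{-2p/3}+\eta^{-(2p-3)/3}\bigr]E_n^{5/3},\qquad b=2^{(2p+3)/3},
\end{equation*}
with nonlinearity exponent $5/3$. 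The standard barrier-sequence argument gives $E_n\to 0$ once the bracket is bounded by a small multiple of $E_0^{-2/3}$; setting each of the three competing terms equal to $E_0^{-2/3}$ yields the three lower bounds $\eta\gtrsim E_0^{1/p}t^{-3/(2p)}$, $\eta\gtrsim \Lambda^{3/(2p)}E_0^{1/p}$, and $\eta\gtrsim E_0^{2/(2p-3)}$. Taking $\eta$ as their maximum (absorbing $\Lambda^{3/(2p)}$ into the constant so that the middle bound becomes the ``$1$'' in $(1+t^{-3/(2p)})$) produces the bound in the statement. A short check that the barrier ratio $b^{3/2}$ dominates the growth factor $b$ shows that not only $E_n$ but also $\mathcal{A}_n$ tends to zero, and Fatou's lemma then delivers $f\le \eta$ almost everywhere on $[t,T]\times\R^3$.

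The main obstacle is the correct pairing of the interpolation exponent $q=5p/3$ with the mixed energy $\mathcal{A}^{2/5}\mathcal{B}^{3/5}$: these two choices are locked together by the requirement that the recursion be linear in $\mathcal{B}$ (so that no $T$-dependence creeps in), and the two-regime bound $\max\{E_0^{1/p}(1+t^{-3/(2p)}),\,E_0^{2/(2p-3)}\}$ in the Proposition then emerges as a direct reflection of the three distinct scalings of the competing terms in the recursive factor.
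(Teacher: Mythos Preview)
Your proposal is correct and follows essentially the same route as the paper: the paper also applies Lemma \ref{lem:level_set_inequality} with $\norm{A[f]}_{L^\infty}\le \Lambda$, uses Chebychev to reduce all terms to $\int\brak{v}^m f_k^{5p/3}$, applies Lemma \ref{lem:sobolev} with $q=5p/3$ to obtain $\mathcal{A}_k^{2/3}\mathcal{B}_k$, and iterates with $E_n = \mathcal{A}_{\ell_n}^{2/5}\mathcal{B}_{\ell_n}^{3/5}$ and the same dyadic levels. Your explicit explanation of why the exponent $5p/3$ is forced by the requirement that the recursion be linear in $\mathcal{B}$ (avoiding a spurious $T$-factor) is a nice addition not spelled out in the paper.
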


\begin{flushleft}
    {\bf \underline{Step 1: Introduction of Level Sets and Weighted Energy Functionals}}
\end{flushleft}

For $f$ a solution to \eqref{eq:landau} and for any $\ell \in \R^+$, we denote the level set function
\begin{equation*}
    f_\ell(t,v) := (f(t,v) - \ell)_+ = \max(f(t,v) - \ell, 0).
\end{equation*}
For $m \ge 9/2$ fixed and $p > 3/2$, we define three weighted energy functionals associated to $f$: for $0\le T_1 < T_2 \le T$ and $0 \le \ell < \infty$,
\begin{equation*}
\begin{aligned}
    \mathcal{A}_\ell(T_1,T_2) :&= \sup_{T_1 < t < T_2}\int_{\R^3} \brak{v}^m f_\ell^{p} \dd v\\
    \mathcal{B}_\ell(T_1,T_2) :&= \int_{T_1}^{T_2}\int_{\R^3} \brak{v}^{m-3} \abs{\nabla f_\ell^{p/2}} \dd v\\
    \mathcal{E}_\ell(T_1,T_2) :&= \sup_{T_1 < t < T_2} \int_{\R^3} \brak{v}^m f_\ell^{p/2} + \int_{T_1}^{T_2}\int_{\R^3} \brak{v}^{m-3}\abs{\nabla f_\ell^{p/2}}^{2} \dd v \dd t
\end{aligned}
\end{equation*}
Our first goal is showing the following nonlinear inequality: for any $0 \le k < \ell$ and $0 \le T_1 < T_2 < T_3 \le T$,
\begin{equation*}
\begin{aligned}
    \mathcal{E}_{\ell}(T_2,T_3) &\le C \left[\frac{1}{(\ell - k)^{2p/3}(T_2 - T_1)} + \frac{1}{(\ell - k)^{2p/3 - 1}} + \frac{1 + \ell}{(\ell - k)^{2p/3}} + \frac{\ell^2}{(\ell - k)^{2p/3+1}} \right]\\
    &\qquad \qquad \times \mathcal{A}^{2/3}_k(T_1,T_3)\mathcal{B}_k(T_1,T_3),
   \end{aligned}
\end{equation*}
where $C$ depends on $\lambda$, $\Lambda$, $p$, and $m$.

\begin{flushleft}
    {\bf \underline{Step 2: Differential Inequality}}
\end{flushleft}

\begin{lemma}\label{lem:subcritical_level_set_inequality}
    Under the assumptions of Proposition \ref{prop:subcritical_degiorgi} and notation introduced in Step 1, the following level set inequality holds for each $\ell > 0$,
    \begin{equation*}
    \begin{aligned}
        \frac{\dd}{\dd t}&\int_{\R^3} \brak{v}^m f_\ell^p \dd v + \int_{\R^3}\brak{v}^{m-3}\abs{\nabla f_\ell^{p/2}}^2 \dd v \\
        &\le C \int_{\R^3} \brak{v}^m f_\ell^{p+1} \dd v + C  (1 + \ell) \int_{\R^3} \brak{v}^m f_\ell^p \dd v + C\ell^2 \int_{\R^3} \brak{v}^m f_\ell^{p-1} \dd v,
    \end{aligned}
    \end{equation*}
    where the constant $C$ depends on $p$, $m$, $\lambda$, and $\Lambda$.
\end{lemma}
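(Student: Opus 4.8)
The plan is to follow the proof of Lemma \ref{lem:level_set_inequality} essentially verbatim, exploiting that the extra hypothesis $A[f] \le \Lambda$ now lets us absorb $\norm{A[f]}_{L^\infty}$ directly into the constants. First I would test \eqref{eq:landau} against the test function $\brak{v}^m f_\ell^{p-1}$ (admissible by the rapid decay of $f$). Using $\nabla f_\ell = \chi_{\set{f > \ell}}\nabla f$ and $\partial_t f_\ell = \chi_{\set{f > \ell}}\partial_t f$, this produces
\begin{equation*}
\frac{1}{p}\frac{\dd}{\dd t}\int_{\R^3} \brak{v}^m f_\ell^p \dd v + \frac{4(p-1)}{p^2}\int_{\R^3} \brak{v}^m A[f]\nabla f_\ell^{p/2}\cdot \nabla f_\ell^{p/2} \dd v \le I_1 + I_2 + I_3,
\end{equation*}
with $I_1 = -\int_{\R^3} f_\ell^{p-1}(\nabla\brak{v}^m)\cdot A[f]\nabla f_\ell \dd v$, $I_2 = \int_{\R^3} \brak{v}^m \nabla f_\ell^{p-1}\cdot \nabla a[f]f \dd v$, and $I_3 = \int_{\R^3} f_\ell^{p-1}(\nabla\brak{v}^m)\cdot \nabla a[f]f \dd v$.

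Second I would bound $I_1$ by Young's inequality, splitting off a small multiple of the coercive term and leaving a remainder dominated by $C\norm{A[f]}_{L^\infty}\int_{\R^3} \brak{v}^{m-2}f_\ell^p \dd v \le C\Lambda \int_{\R^3} \brak{v}^m f_\ell^p \dd v$. Third --- the only step with any content --- I would integrate by parts in $I_2$, using the Landau structural identity $\nabla a[f] = \nabla\cdot A[f]$ to move one derivative off $\nabla a[f]$ onto $f$. Writing $f = f_\ell + \ell$ on $\set{f > \ell}$, so that $f^2 = f_\ell^2 + 2\ell f_\ell + \ell^2$, one finds $I_2$ equals $-I_3$ plus the reaction term $\tfrac{p-1}{p}\int_{\R^3}\brak{v}^m f_\ell^{p+1}\dd v$, plus lower-order contributions of the form $\ell\int_{\R^3}\brak{v}^m f_\ell^p\dd v$ and $\ell^2\int_{\R^3}\brak{v}^m f_\ell^{p-1}\dd v$, plus a term built from $(\nabla\brak{v}^m)\cdot[\nabla\cdot(A[f]f_\ell^p) - A[f]\nabla f_\ell^p]$ which, after a further integration by parts, yields a Hessian term $\int_{\R^3} f_\ell^p A[f]:\nabla^2\brak{v}^m\dd v$ (controlled by $\Lambda\int_{\R^3}\brak{v}^{m-2}f_\ell^p\dd v$) and a gradient term reabsorbed into the coercive term by Young. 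The key point is that combining $I_2$ and $I_3$ cancels the genuinely problematic $\nabla a[f]$ contributions, exactly as in \eqref{eq:I1}.

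Finally I would collect all the estimates, reabsorb the small multiples of $\int_{\R^3}\brak{v}^m A[f]\nabla f_\ell^{p/2}\cdot\nabla f_\ell^{p/2}\dd v$ into the left-hand side, and invoke the lower bound $A[f] \ge \lambda\brak{v}^{-3}I$ to replace the surviving coercive term by $\int_{\R^3}\brak{v}^{m-3}\abs{\nabla f_\ell^{p/2}}^2\dd v$. Since every occurrence of $\norm{A[f]}_{L^\infty}$ has been bounded by $\Lambda$, the coefficient of the $f_\ell^p$ term is now $C(1+\ell)$ rather than $C(\norm{A[f]}_{L^\infty}+\ell)$, giving precisely the asserted inequality with $C = C(p,m,\lambda,\Lambda)$. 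I do not anticipate any real obstacle: the computation is structurally identical to Lemma \ref{lem:level_set_inequality}, and the only bookkeeping is to keep $p$ and $m$ general and to verify that the powers of $f_\ell$ appearing on the right are exactly $\set{p+1,\,p,\,p-1}$.
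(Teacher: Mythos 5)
Your proposal is correct and matches the paper's approach: the paper simply cites Lemma \ref{lem:level_set_inequality} and then replaces $\norm{A[f]}_{L^\infty}$ by $\Lambda$ to absorb it into the constant, exactly as you propose. The only difference is that you spell out the computation from Lemma \ref{lem:level_set_inequality} in detail rather than citing it, which is harmless.
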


The proof follows from Lemma \ref{lem:level_set_inequality}, noting that $\norm{A[f]}_{L^\infty} \le \Lambda$.

\begin{flushleft}
    \underline{{\bf Step 3: Gain of Integrability}}
\end{flushleft}

In this step, we use the inequality derived in Lemma \ref{lem:subcritical_level_set_inequality} to obtain a nonlinear inequality for our energy functional $\mathcal{E}$ defined in Step 1.

\begin{lemma}\label{lem:subcritical_gain_of_integrability}
    Under the assumptions of Proposition \ref{prop:subcritical_degiorgi} and notation introduced in Step 1, the energy functional $\mathcal{E}$ satisfies for each $0 \le T_1 < T_2 < T_3 \le T$ and each $0 \le k < \ell$,
    \begin{equation*}
    \begin{aligned}
    \mathcal{E}_{\ell}(T_2,T_3)   &\le C \left[\frac{1}{(\ell - k)^{2p/3}(T_2 - T_1)} + \frac{1}{(\ell - k)^{2p/3 - 1}} + \frac{1 + \ell}{(\ell - k)^{2p/3}} + \frac{\ell^2}{(\ell - k)^{2p/3+1}} \right]\\
    &\qquad \qquad \times \mathcal{A}^{2/3}_k(T_1,T_3)\mathcal{B}_k(T_1,T_3),
    \end{aligned}
    \end{equation*}
    where the constant $C$ depends only on $p$, $\lambda$, $\Lambda$, and $m$.
\end{lemma}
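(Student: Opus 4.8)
The plan is to mirror the structure of the critical case (Lemma \ref{lem:gain_of_integrability}), but the argument should be substantially simpler because $\norm{A[f]}_{L^\infty} \le \Lambda$ removes the delicate coefficient term that drove the more elaborate Chebychev/interpolation bookkeeping there. First I would integrate the differential inequality from Lemma \ref{lem:subcritical_level_set_inequality} (with weight $m$, exponent $p$, and level $\ell$) over $\tau\in[t_1,t_2]\subset[0,T]$, then take a supremum over $t_2\in[T_2,T_3]$ and average over $t_1\in[T_1,T_2]$, exactly as in \eqref{eq:intergral_level_set_inequality}. This produces a bound on $\mathcal{E}_\ell(T_2,T_3)$ by a constant-coefficient combination of $\int\!\!\int \brak{v}^m f_\ell^{p+1}$, $\int\!\!\int \brak{v}^m f_\ell^p$, and $\int\!\!\int\brak{v}^m f_\ell^{p-1}$ over $[T_1,T_3]\times\R^3$, with coefficients $\tfrac{1}{T_2-T_1}$, $(1+\ell)$, and $\ell^2$ respectively, plus the averaged initial term which also gets absorbed into the $\tfrac{1}{T_2-T_1}$ piece.

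Next I would convert everything to the $k$-level quantities using the Hölder–Chebychev estimate \eqref{eq:Chebychev} for the measure $\dd\nu = \brak{v}^m\dd v$: since $\set{f_\ell>0}=\set{f_k>\ell-k}$, one has $\int \brak{v}^m f_\ell^{\alpha_1}\le (\ell-k)^{\alpha_1-\alpha_2}\int\brak{v}^m f_k^{\alpha_2}$ for $\alpha_1<\alpha_2$. Applying this with $\alpha_2 = p+1$ and $\alpha_1\in\{p-1,\,p,\,p+1\}$ reduces all three integrals to $\int\!\!\int\brak{v}^m f_k^{p+1}$, at the cost of the factors $(\ell-k)^{-2}$, $(\ell-k)^{-1}$, and $(\ell-k)^0$ respectively; collecting terms yields the bracketed prefactor $\big[\tfrac{1}{(\ell-k)^{2p/3}(T_2-T_1)} + \tfrac{1}{(\ell-k)^{2p/3-1}} + \tfrac{1+\ell}{(\ell-k)^{2p/3}} + \tfrac{\ell^2}{(\ell-k)^{2p/3+1}}\big]$ once one accounts for the normalization below — I must be slightly careful about which exponent of $(\ell-k)$ attaches to which term, matching the $2p/3$ scaling in the statement. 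Finally, for the single remaining space-time integral $\int\!\!\int \brak{v}^m f_k^{p+1}$, I would invoke the weighted Sobolev/interpolation inequality \eqref{eq:interpolation-1} of Lemma \ref{lem:sobolev} with the pair $(p,\,p+1)$ (valid since $p<p+1<3p$), giving pointwise in time $\int\brak{v}^m f_k^{p+1}\lesssim \big(\int\brak{v}^{m'} f_k^p\big)^{\theta}\big(\int\brak{v}^{m-3}\abs{\nabla f_k^{p/2}}^2\big)$ for the appropriate $\theta = \tfrac{3p-(p+1)}{2p}$ — checking that the induced weight $m'\le m$ so it is controlled by $\mathcal{A}_k$ — and integrating in time to obtain $\int\!\!\int\brak{v}^m f_k^{p+1}\lesssim \mathcal{A}_k^{\theta}(T_1,T_3)\,\mathcal{B}_k(T_1,T_3)$, where $\theta$ should work out to $2/3$ when the weight indices are arranged as in Lemma \ref{lem:sobolev}.

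The main obstacle is purely bookkeeping: making the exponents of $(\ell - k)$ in the final prefactor genuinely match the stated $2p/3$-scaling, which requires tracking how the $(\ell-k)^{\alpha_1-\alpha_2}$ Chebychev factors interact with a normalization of $f_\ell$ (the $\mathcal{A}_k^{2/3}$ factor on the right side suggests the computation is organized so that $\int\brak{v}^m f_k^{p+1}$ is split as $\mathcal{A}_k^{1/3}$ times $\int\brak{v}^m f_k^{p+2/3}$-type quantities, or via the Sobolev step with $\theta=2/3$). There is no analytic difficulty — the uniform bound on $A[f]$ means no Hardy–Littlewood–Sobolev or Lorentz-space argument is needed, unlike \eqref{eq:coefficient_bound} — so the proof reduces to writing \eqref{eq:intergral_level_set_inequality}, \eqref{eq:Chebychev}, and one application of \eqref{eq:interpolation-1}, then collecting constants.
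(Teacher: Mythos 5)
Your outline of the proof skeleton is correct and matches the paper: integrate the differential inequality from Lemma \ref{lem:subcritical_level_set_inequality}, take supremum over $t_2$ and average over $t_1$, reduce $\ell$-level integrals to $k$-level integrals via Chebychev for $\dd\nu = \brak{v}^m\dd v$, then close with the weighted Sobolev interpolation \eqref{eq:interpolation-1}. But your key parameter choice --- Chebychev with target exponent $\alpha_2 = p+1$ and interpolation with $q = p+1$ --- is wrong and does not produce the stated inequality, and you correctly sense this but don't identify the fix.

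Concretely, the paper uses $\alpha_2 = 5p/3$ throughout, and this is not a cosmetic choice: it is the unique exponent that makes both halves of the argument close simultaneously. On the Chebychev side, $\alpha_1 - \alpha_2$ with $\alpha_1 \in \{p-1,\,p,\,p+1\}$ and $\alpha_2 = 5p/3$ gives precisely $-(2p/3 + 1)$, $-2p/3$, and $-(2p/3 - 1)$, which are exactly the exponents of $(\ell-k)$ appearing in the bracket of the statement. Your choice $\alpha_2 = p+1$ gives $-2$, $-1$, $0$, which are independent of $p$ and cannot be massaged into $2p/3$-type powers --- there is no ``normalization below'' that repairs this, contrary to what you hope. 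On the Sobolev side, applying \eqref{eq:interpolation-1} with $q = 5p/3$ gives exponent $\tfrac{3p-q}{2p} = \tfrac{2}{3}$ on $\norm{f_k}_{L^p_m}^p$ and exponent $\tfrac{3(q-p)}{2p} = 1$ on $\norm{\nabla f_k^{p/2}}_{L^2_{m-3}}^2$, so that integrating in time yields exactly $\mathcal{A}_k^{2/3}\mathcal{B}_k$. Your choice $q = p+1$ would instead give exponents $\tfrac{2p-1}{2p}$ and $\tfrac{3}{2p}$, which is neither the stated form nor (after a H\"older in time to restore the gradient exponent to $1$) something that can be rearranged into $\mathcal{A}_k^{2/3}\mathcal{B}_k$ without picking up a spurious power of $(T_3 - T_1)$. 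The condition $p+1 < 5p/3$, i.e.\ $p > 3/2$, is exactly why $\alpha_2 = 5p/3$ is admissible in the subcritical regime, and is the same reason the subcritical De Giorgi iteration in Step 4 closes.
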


Following the proof of Proposition \ref{prop:critical_degiorgi}, we integrate the inequality from Lemma \ref{lem:subcritical_level_set_inequality} over $\tau \in [t_1, t_2] \subset [0,T]$, take a supremum over $t_2 \in [T_2, T_3]$, and average over $t_1 \in [T_1,T_2]$ to find

\begin{equation}\label{eq:integral_level_set_inequality2}
\begin{aligned}
    \sup_{\tau \in [T_2,T_3]}& \int_{\R^3} \brak{v}^m f_\ell^p(\tau) \dd v + \int_{T_2}^{T_3}\int_{\R^3}\brak{v}^{m-3}\abs{\nabla f_\ell^{p/2}}^2 \dd v \dd \tau\\ &\le \frac{1}{T_2 - T_1}\int_{T_1}^{T_3}\int_{\R^3} \brak{v}^m f_\ell^p \dd v\dd \tau + C\int_{T_1}^{T_3}\int_{\R^3} \brak{v}^m f_\ell^{p+1} \dd v\dd \tau\\
        &\qquad + C(1 + \ell) \int_{T_1}^{T_3}\int_{\R^3} \brak{v}^m f_\ell^p \dd v \dd \tau + C\ell^2 \int_{T_1}^{T_3}\int_{\R^3} \brak{v}^m f_\ell^{p-1} \dd v\dd \tau.
\end{aligned}
\end{equation}
Recall that as a consequence of the definition of the level set functions $f_\ell$ and Chebychev's inequality for the measure $\dd \nu(v) = \brak{v}^m \dd v$, we have from \eqref{eq:Chebychev}, for any $ 0\le k < \ell$ and $0 < \alpha_1 < \alpha_2$,
\begin{equation}\label{eq:Chebychev2}
    \int_{T_1}^{T_3}\int_{\R^3} \brak{v}^m f_{\ell}^{\alpha_1} \dd v \dd \tau \le  (\ell - k)^{\alpha_1 - \alpha_2} \int_{T_1}^{T_3}\int_{\R^3} f_k^{\alpha_2} \dd v \dd \tau.
\end{equation}
Combining \eqref{eq:integral_level_set_inequality2} and \eqref{eq:Chebychev2} (with $\alpha_1 \in \set{p-1,\,p,\, p+1}$ and $\alpha_2 = 5p/3$), we have shown
\begin{equation*}
\begin{aligned}
    &\mathcal{E}_{\ell}(T_2,T_3) \\
    &\le C\left[\frac{1}{(T_2 - T_1)(\ell - k)^{2p/3}} + \frac{1}{(\ell -k)^{(2p - 3)/3}} + \frac{1 + \ell}{(\ell -k)^{2p/3}} + \frac{\ell^2}{(\ell - k)^{(2p + 3)/3}}\right]\\
    &\qquad \qquad \times \int_{T_1}^{T_3}\int_{\R^3} \brak{v}^{m} f_k^{5p/3} \dd v\dd\tau.
\end{aligned}
\end{equation*}
By interpolation and the weighted Sobolev inequality from Lemma \ref{lem:sobolev},
\begin{equation*}
    \int_{\R^3} \brak{v}^m f_\ell^{5p/3} \dd v \le C\left(\int_{\R^3} \brak{v}^{9/2} f_\ell^p \dd v\right)^{2/3}\left(\int_{\R^3} \brak{v}^{m-3} \abs{\nabla f_\ell^{p/2}} \dd v\right),
\end{equation*}
Since $m \ge 9/2$, integrating in time we find:
\begin{equation*}
\begin{aligned}
    \int_{T_1}^{T_3}\int_{\R^3} \brak{v}^m f_\ell^{\frac{5p}{3}} \dd v \dd \tau &\le C\int_{T_1}^{T_3}\left(\int_{\R^3} \brak{v}^m f_\ell^p \dd v\right)^{\frac{2}{3}}\left(\int_{\R^3} \brak{v}^{m-3} \abs{\nabla f_\ell^{p/2}} \dd v\right) \dd \tau\\ 
    &\le C\mathcal{A}_\ell^{2/3}(T_1,T_3)\mathcal{B}_\ell(T_1,T_3).
\end{aligned}
\end{equation*}
This concludes the proof of Lemma \ref{lem:subcritical_gain_of_integrability}.

\begin{flushleft}
    \underline{{\bf Step 4: De Giorgi Iteration}}
\end{flushleft}

We now finish the proof of Proposition \ref{prop:subcritical_degiorgi} using a De Giorgi iteration. We set $0 < t < T$ the times from the statement of Proposition \ref{prop:subcritical_degiorgi}. Then, we define our iteration quantities:
\begin{equation*}
    \ell_n = K(1 - 2^{-n}) \qquad t_n = t(1 - 2^{-n}) \qquad E_n = \mathcal{A}_{\ell_n}^{2/5}(t_n,T)\mathcal{B}_{\ell_n}(t_n,T)^{3/5}.
\end{equation*}
Then, using Young's inequality and Lemma \ref{lem:subcritical_gain_of_integrability}, we have the following recurrence for $E_n$:
\begin{equation*}
\begin{aligned}
    E_{n+1} &\le C\left(\mathcal{A}_{\ell_{n+1}}(t_{n+1},T) + \mathcal{B}_{\ell_{n+1}}(t_{n+1},T)\right) = C\mathcal{E}_{\ell_{n+1}}(t_{n+1},T)\\
            &\le C\Bigg(\frac{1}{(t_{n+1} - t_n)(\ell_{n+1} - \ell_n)^{2p/3}} + \frac{1}{(\ell_{n+1} - \ell_n)^{2p/3 - 1}} \\
            &\qquad \qquad + \frac{1 + \ell_{n+1}}{(\ell_{n+1} - \ell_n)^{2p/3}} + \frac{\ell_{n+1}^2}{(\ell_{n+1} - \ell_n)^{2p/3 + 1}} \Bigg)E_n^{5/3}\\
        &\le C2^{n\kappa}\left(\frac{1}{K^{2p/3}t} + \frac{1}{K^{2p/3 - 1}} + \frac{1 + K}{K^{2p/3}} + \frac{K^2}{K^{2p/3 + 1}} \right)E_n^{5/3}\\
        &\le C^*2^{n\kappa}\left(\frac{1}{K^{\frac{2p}{3}}t} + \frac{1}{K^{\frac{2p}{3}}} + \frac{1}{K^{\frac{2p-3}{3}}}\right)E_n^{5/3},
\end{aligned}
\end{equation*}
where $\kappa = 2p/3 + 1$ and $C^* > 1$ is now a fixed constant. We attempt to find a barrier sequence of the form $B_n := B^n E_0$ for some $B > 1$. That is, we seek values of $K$ and $B$ for which
\begin{equation*}
    B_{n+1} \ge C^*2^{n\kappa}\left(\frac{1}{K^{\frac{2p}{3}}t} + \frac{1}{K^{\frac{2p}{3}}} + \frac{1}{K^{\frac{2p-3}{3}}}\right)B_n^{5/3}.
\end{equation*}
Inserting the definition of $B_n$, this is implied by
\begin{equation*}
    1 \ge C^* B^{\frac{2n - 3}{3}}2^{n\kappa} E_0^{2/3} \left(\frac{1}{K^{\frac{2p}{3}}t} + \frac{1}{K^{\frac{2p}{3}}} + \frac{1}{K^{\frac{2p-3}{3}}}\right).
\end{equation*}
We first pick $B$ sufficiently large so that $B^{\frac{2}{3}} \ge 2^{\kappa}$. Second, we pick $K$ so that each remaining term is less than $1/3$:
\begin{equation*}
    K = \widetilde{C}(C^*,B,p)\max\left(E_0^{\frac{1}{p}}t^{-\frac{3}{2p}}, E_0^{\frac{1}{p}}, E_0^{\frac{2}{2p-3}} \right)
\end{equation*}
With these choices of $K$ and $B$, since $B_0 = E_0$, we find inductively:
\begin{equation*}
\begin{aligned}
    E_{n+1} &\le C^*2^{n\kappa}\left(\frac{1}{K^{\frac{2p}{3}}t} + \frac{1}{K^{\frac{2p}{3}}} + \frac{1}{K^{\frac{2p-3}{3}}}\right)E_n^{5/3}\\
        &\le C^*2^{n\kappa}\left(\frac{1}{K^{\frac{2p}{3}}t} + \frac{1}{K^{\frac{2p}{3}}} + \frac{1}{K^{\frac{2p-3}{3}}}\right)B_n^{5/3} \le B_{n+1}.
\end{aligned}
\end{equation*}
Therefore, 
\begin{equation*}
    \norm{f \chi_{\set{f \ge K}}}_{L^\infty(t,T;L^p_m)} \le \lim_{n\to\infty} E_n \le \lim_{n\to\infty} B_n = 0,
\end{equation*}
and we conclude $f(\tau,v)\le K$ pointwise a.e. on $[t,T]\times \R^3$. By the choice of $K$, we have shown the estimate:
\begin{equation*}
    f(\tau,v) \le C\max\left(E_0^{\frac{1}{p}}\left(1 + t^{-\frac{3}{2p}}\right), E_0^{\frac{2}{2p-3}}\right) \qquad \text{for almost all }\tau,v \in[t, T]\times \R^3.
\end{equation*}
This concludes the proof of Proposition \ref{prop:subcritical_degiorgi}.

\subsection{Propagation of \texorpdfstring{$L^p_m$}{Lpm} for smooth data}

\begin{lemma}\label{lem:subcritical-propagation}
Suppose $p > 3/2$ and $m \ge 9/2$, $f_{in} \in \S(\R^3)$, and $f:[0,T^*) \to \R^+$ is a smooth rapidly decaying solution to \eqref{eq:landau} with initial data $f_{in}$. Then, there is a time $T = T(\norm{f_{in}}_{L^p_m},m,p) > 0$ such that $f$ satisfies 
\begin{equation*}
    \sup_{0 < t < \min(T,T^*)} \norm{f(t)}_{L^p_m}^p + \int_0^{\min(T,T^*)}\int_{\R^3} \brak{v}^{m-3}\abs{\nabla f^{p/2}}^2 \dd v \dd \tau \le 4\norm{f_{in}}_{L^p_m}^p.
\end{equation*}
\end{lemma}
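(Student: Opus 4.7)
The plan is to obtain a scalar differential inequality of the form $Y'(t) + \tfrac{1}{2}D(t) \lesssim Y(t)^{\alpha_1} + Y(t)^{\alpha_2}$, where
\begin{equation*}
    Y(t) := \int_{\R^3} \brak{v}^m f^p \dd v, \qquad D(t) := \int_{\R^3} \brak{v}^{m-3} \abs{\nabla f^{p/2}}^2 \dd v,
\end{equation*}
and $\alpha_1, \alpha_2 > 1$, then bootstrap this on a short time interval via a barrier/continuity argument. First I would apply Lemma \ref{lem:level_set_inequality} with $\ell = 0$ to obtain the energy inequality
\begin{equation*}
    Y'(t) + D(t) \lesssim \int_{\R^3} \brak{v}^m f^{p+1}\dd v + \norm{A[f]}_{L^\infty} Y(t).
\end{equation*}
All that remains is to bound the right hand side by a (possibly superlinear) function of $Y$ and a small multiple of $D$.

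For the coefficient term, Lemma \ref{lem:coefficient_bounds} applied with exponent $p > 3/2$, combined with the normalization $\norm{f}_{L^1} = 1$, gives $\norm{A[f]}_{L^\infty} \lesssim \norm{f}_{L^p}^{\frac{p}{3p-3}} \le Y^{\frac{1}{3p-3}}$, so the second term is bounded by $Y^{\frac{3p-2}{3p-3}}$, which is superlinear in $Y$. For the nonlinear term, I would apply the weighted interpolation Lemma \ref{lem:sobolev} with parameters $(p_{\mathrm{lemma}}, q_{\mathrm{lemma}}) = (p, p+1)$ and weight $k = m$. A direct computation shows the auxiliary weight produced by the lemma is $m' = \frac{(2p-3)m + 9}{2p-1}$, which satisfies $m' \le m$ precisely when $m \ge 9/2$ — this is where the weight hypothesis enters. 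The result is the bound
\begin{equation*}
    \int_{\R^3} \brak{v}^m f^{p+1} \dd v \lesssim Y(t)^{\frac{2p-1}{2p}} D(t)^{\frac{3}{2p}},
\end{equation*}
and since $\frac{3}{2p} < 1$ for $p > 3/2$, Young's inequality with conjugate exponents $\frac{2p}{2p-3}$ and $\frac{2p}{3}$ gives $Y^{\frac{2p-1}{2p}} D^{\frac{3}{2p}} \le \tfrac{1}{2}D + C Y^{\frac{2p-1}{2p-3}}$, with the gradient term absorbed into the coercive term on the left.

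Collecting everything, I arrive at
\begin{equation*}
    Y'(t) + \tfrac{1}{2}D(t) \le C_1 Y(t)^{\frac{2p-1}{2p-3}} + C_2 Y(t)^{\frac{3p-2}{3p-3}},
\end{equation*}
where $C_1, C_2$ depend only on $p$, $m$, and the ellipticity constant $c_0$ for $A[f]$ (which in turn depends only on mass, energy, and the $L^p$ bound via Lemmas \ref{lem:ellipticity}–\ref{lem:entropy-bound}). Set $M := \norm{f_{in}}_{L^p_m}^p = Y(0)$ and choose
\begin{equation*}
    T = \min\!\left(1,\; \frac{M}{2 C_1 (2M)^{\frac{2p-1}{2p-3}} + 2 C_2 (2M)^{\frac{3p-2}{3p-3}}}\right).
\end{equation*}
A standard continuity/barrier argument then shows $Y(t) \le 2M$ on $[0,\min(T,T^*)]$: if $t_0$ is the first time $Y(t_0) = 2M$, then on $[0,t_0]$ the right hand side is bounded by $M/T$, so integrating yields $Y(t_0) \le M + M = 2M$, and a strict inequality contradiction can be obtained by replacing $2M$ by $4M$ in the argument. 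Integrating the differential inequality over $[0,\min(T,T^*)]$ then yields $\sup Y(t) + \int_0^{\min(T,T^*)} D(t)\dd t \le 4M$ as claimed.

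The main obstacle, and the reason the proof fails at $p = 3/2$, is the exponent $\frac{2p-1}{2p-3}$ appearing after Young's inequality: it diverges as $p \to 3/2^+$, and simultaneously the interpolation exponent $\frac{3}{2p}$ on $D$ reaches $1$, so one can no longer absorb the gradient term. The remainder of the work is purely bookkeeping of exponents and the elementary ODE continuation argument; the hypothesis $m \ge 9/2$ is needed exactly to make the weighted interpolation close without losing moments.
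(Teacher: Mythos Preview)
Your proof is correct and follows essentially the same approach as the paper: derive the energy inequality from Lemma~\ref{lem:level_set_inequality} with $\ell=0$, use the weighted interpolation \eqref{eq:interpolation-1} together with Young's inequality (which requires $p>3/2$ and $m\ge 9/2$ exactly as you note) to close a superlinear scalar ODE for $Y$, and conclude by a short-time bootstrap. The only cosmetic differences are that the paper absorbs $\norm{A[f]}_{L^\infty}$ into a constant $\Lambda$ (yielding a linear term $Cy$ instead of your $Y^{\frac{3p-2}{3p-3}}$) and solves the resulting Bernoulli inequality explicitly rather than via a barrier argument; both variants are equivalent, and your explicit bound on $\norm{A[f]}_{L^\infty}$ in terms of $Y$ is arguably cleaner since it avoids the implicit bootstrap on $\Lambda$.
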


\begin{proof}
    From Lemma \ref{lem:subcritical_level_set_inequality} with $\ell = 0$, we find
    \begin{equation}\label{eq:diff-ineq-subcritical}
    \begin{aligned}
        \frac{\dd}{\dd \tau} \int_{\R^3} \brak{v}^m f^p \dd v &+ \int_{\R^3} \brak{v}^{m-3}\abs{\nabla f^{p/2}} \dd v \\
        &\le C\int_{\R^3}\brak{v}^m f^{p+1} \dd v + C\int_{\R^3} f^p \brak{v}^m \dd v.
      \end{aligned}
    \end{equation}
    By the interpolation estimate in Lemma \ref{lem:sobolev} with $q = p+1$ (since $m \geq \frac{9}{2}$) and Young's inequality (since $3/2p < 1$ by assumption), we find for any $\delta > 0$,
    \begin{equation*}
    \begin{aligned}
        \int_{\R^3}\brak{v}^m f^{p+1} \dd v &\le C\left(\int_{\R^3} \brak{v}^m f^p \dd v\right)^{1 - 1/2p}\left(\int_{\R^3} \brak{v}^{m-3}\abs{\nabla f^{p/2}}^2 \dd v\right)^{3/2p}\\
        &\leq \delta\left(\int_{\R^3} \brak{v}^{m-3}\abs{\nabla f^{p/2}}^2 \dd v\right) + C\delta^{-\frac{2p}{2p-3}} \left(\int_{\R^3} \brak{v}^m f^p \dd v\right)^{\frac{2p-1}{2p-3}}.
    \end{aligned}
    \end{equation*}
    Taking $\delta$ sufficiently small, depending only on $\lambda$, $\Lambda$, $p$, and $m$, and setting $\alpha = \frac{2p-1}{2p-3} > 1$, $y(t) = \norm{f(t)}_{L^p_m}^p$, and $F(t) = \norm{\nabla f(t)^{p/2}}_{L^2_m}^2$, we find
    \begin{equation*}
        \frac{\dd}{\dd t} y(t) + F(t) \le Cy^\alpha(t) + Cy(t).
    \end{equation*}
    This is a Bernoulli-type differential inequality for $y$, which can be explicitly solved by a suitable change of coordinates:
    Setting $v = e^{-C(1-\alpha)t}y^{1-\alpha}$, we find $v$ satisfies
    \begin{equation*}
        \frac{\dd}{\dd t} v = (1-\alpha)e^{-C(1-\alpha)t}y^{-\alpha} y^\prime(t) - C(1-\alpha)v \ge C(1-\alpha)e^{-C(1-\alpha)t}.
    \end{equation*}
    Integrating then gives that
    \begin{equation*}
        v(t) \ge v(0) - \left[\exp(C(\alpha - 1)t)  - 1\right].
    \end{equation*}
    This implies the following bound on $y(t)$:
    \begin{equation*}
        y(t) \le \exp(Ct)\left[y_0^{1-\alpha} + 1 -  \exp(C(\alpha - 1)t) \right]^{\frac{1}{1-\alpha}}
    \end{equation*}
    Since $\alpha > 1$,  we find a $T = T(y_0,\alpha, C)$ sufficiently small such that $y(t) \le 2y_0$ on $[0,\min(T,T^*)]$.
    Consequently, we integrate to find
    \begin{equation*}
    \int_0^{\min(T,T^*)} F(t) \dd t \le y_0 + C(y_0^\alpha + y_0)T \le 2y_0,
    \end{equation*}
    where the last inequality follows from picking $T$ possibly smaller.
\end{proof}

\subsection{Construction of solution}

We now construct local-in-time solutions. For $f_{in} \in L^p_m \cap L^1_2$, we take $f_{in,\eps} \in \S(\R^3)$ so that $f_{in,\eps}$ converges to $f_{in}$ in $L^1_2 \cap L^p_m$. Using local-in-time well-posedness for $f_{in,\eps}$, there are corresponding solutions $f_{\varepsilon}:[0,T^*_{\eps})\times \R^3 \to \R^+$. From Lemma \ref{lem:subcritical-propagation}, there is a uniform-in-$\eps$ time $T$ such that for $0 < t < \min(T,T^*_{\eps})$, we have the uniform estimate
\begin{equation*}
    \left(\sup_{0 < \tau < t} \int_{\R^3} \brak{v}^m f^p_{\eps} \dd v\right)^{2/5}\left(\int_{0}^t\int_{\R^3} \brak{v}^{m-3} \abs{\nabla f^{p/2}_{\eps}}^2 \dd v \dd \tau\right)^{3/5} \lesssim 1.
\end{equation*}
Using Proposition \ref{prop:subcritical_degiorgi}, we find that $T^*_{\eps} > T$ for each $\eps > 0$ and there holds the uniform-in-$\eps$ estimate:
\begin{equation*}
    \norm{f_{\varepsilon}(t)}_{L^\infty} \lesssim \left(1 + \frac{1}{t^{3/2p}}\right) \qquad \text{for each }0 < t < T.
\end{equation*}
Passing to the limit, $f_{\eps} \to f$, where these bounds guarantee $f \in C(0,T;L^p_m)$ is smooth solution for positive time and satisfies the claimed smoothing estimates. Moreover, by uniqueness result of Fournier, Theorem \ref{thm:Fournier}, the limit $f$ is the unique $L^1(0,T;L^\infty)$ solution with initial data $f_{in}$.

\subsection{Uniqueness}
Let $g: [0, T] \times \R^3 \to \R^+$ be a weak solution of \eqref{eq:landau} with initial datum $g_{in} \in L^1_2 \cap L\log L$, attained in the sense of distributions. We suppose further that $g \in L^r(0, T; L^p_{9/2}(\R^3)) \cap L^\infty(s,T;W^{1,\infty}(\Omega))$ for each $0 < s < T$ and each open, bounded $\Omega \subset \R^3$ and some pair $(r,p)$ satisfying $p > 3/2$ and $r > \frac{2p}{2p-3}$.

Our goal is to derive a differential inequality for $y(t) := \norm{g(t)}_{L^p_{9/2}(\R^3)}^p$ much as in Lemma \ref{lem:subcritical-propagation} to obtain weighted $L^r(0,T;L^p) \to L^\infty(t,T;L^p)$ smoothing estimates. Formally, the idea is to test \eqref{eq:landau} with $\brak{v}^{9/2} g^{p-1}$ and use the Sobolev embedding in Lemma \ref{lem:sobolev} to control the highest order term. 
Formally testing \eqref{eq:landau} with $g^{p-1}\brak{v}^{9/2}$, Lemma \ref{lem:subcritical_level_set_inequality} (with $\ell = 0$ and $m = 9/2$) implies the following differential inequality:
\begin{equation}\label{eq:diff-ineq-subcritical-g}
    \begin{aligned}
       \frac{\dd}{\dd t}\int_{\R^3} \brak{v}^{9/2} g^p \dd v &+ \int_{\R^3}\brak{v}^{3/2}\abs{\nabla g^{p/2}}^2 \dd v\\
        &\le C \int_{\R^3} \brak{v}^{9/2} g^{p+1} \dd v + C\int_{\R^3} \brak{v}^{9/2} g^{p} \dd v.
    \end{aligned}
\end{equation}
Then, we interpolate and use the Sobolev embedding from Lemma \ref{lem:sobolev} and apply Young's inequality to obtain: for any $\varepsilon \in (0,1)$
    \begin{align*}
        \int_{\R^3} \brak{v}^{9/2} g^{p+1}\dd v &\le \norm{g}_{L^p_{9/2}}^{p - \frac{1}{2}}\norm{g}_{L^{3p}_{9/2}}^{\frac{3}{2}}\le \norm{g}_{L^p_{9/2}}^{p - \frac{1}{2}}\norm{g^{p/2}}_{L^{6}_{9/2}}^{\frac{3}{p}}\\
            &\le C\norm{g}_{L^p_{9/2}}^{p - \frac{1}{2}}\left(\int_{\R^3}\brak{v}^{3/2}\abs{\nabla g^{p/2}}^2 \dd v\right)^{\frac{3}{2p}}\\
            &\le C(\eps) \norm{g}_{L^p_{9/2}}^{\left(p- \frac{1}{2}\right)\frac{2p}{2p-3}} + \eps \left(\int_{\R^3}\brak{v}^{3/2}\abs{\nabla g^{p/2}}^2 \dd v\right).
    \end{align*}
Picking $\eps$ sufficiently small, the second term can absorbed by left hand side of \eqref{eq:diff-ineq-subcritical-g}. Denoting by $y(t) := \norm{g(t)}_{L^p_m}^p$, for $t \in (0, T]$ we find:
\begin{equation*}
        \frac{d}{dt} y + \norm{\nabla g^{p/2}(t)}_{L^2_{3/2}}^2 \le C(\eps)  y^{1+\frac{2}{2p-3}}(t) + y(t).
    \end{equation*}
Since $y$ is only absolutely continuous for positive times, we integrate over $[s, t]$ for $0 < s < t < T$ to obtain: 
\begin{align}\label{eq:subcritical-ode-y}
    y(t) +\int_s^t \norm{\nabla g^{p/2}(\tau)}_{L^2_{3/2}}^2 \dd \tau \leq y(s) + C(\eps)  \int_s^t y^{1+\frac{2}{2p-3}}(\tau) \dd \tau + \int_s^t y(\tau) \dd \tau.
\end{align}
Gr\"{o}nwall's inequality and $\frac{2p}{2p-3} \le r$ then implies the explicit bound:
\begin{align*}
    \sup_{s \leq t \leq T} y(t) \leq y(s)\exp\left( C (T-s) + C \int_s^T y^{\frac{2}{2p-3}}(\tau) \dd \tau\right) \le Cy(s).
\end{align*}
Rearranging powers and averaging over $s \in [0, t]$, we find 
\begin{align*}
    \norm{g(t)}_{L^p_{9/2}}^r \leq C t^{-1} \int_0^t \norm{g(s)}_{L^p_{9/2}}^r \dd s \leq C t^{-1}. 
\end{align*}
We plug this estimate back into \eqref{eq:subcritical-ode-y}, to obtain for each $0 < t < T$,
\begin{equation}\label{eq:decay-g}
\begin{aligned}
    \sup_{t< \tau < T} &\norm{g(\tau)}_{L^p_{9/2}}^p + \int_t^T \norm{\nabla g^{p/2}(\tau)}_{L^2_{3/2}}^2 \mathrm{d}\tau \\
    &\leq \norm{g(t)}_{L^p_{9/2}}^p + C(\eps)  \int_t^T \norm{g(\tau)}_{L^p_{9/2}}^{p+r} \mathrm{d}\tau + \int_t^T \norm{g(\tau)}_{L^p_{9/2}}^p \mathrm{d}\tau\\
    &\leq C t^{-\frac{p}{r}}.
\end{aligned}
\end{equation}

In a second step, we use smoothing estimates to show that $g$ belongs to Fournier's uniqueness class, i.e. $g \in L^1(0, T; L^\infty)$. 
Using the smoothing estimate of Proposition \ref{prop:subcritical_degiorgi} - which can be rigorously justified since $g \in W^{1, \infty}_{loc}$ satisfies the global (in $v$) bound \eqref{eq:decay-g} - we obtain
\begin{equation*}
    \norm{g}_{L^\infty([t,T]\times \R^3)} \leq C \max\left\{t^{- \frac{1}{r}}\left(1 + t^{-\frac{3}{2p}}\right), t^{-\frac{2p}{r(2p-3)}}\right\} \leq Ct^{-\left(\frac{1}{r} + \frac{3}{2p}\right)} + Ct^{-\frac{2p}{r(2p-3)}}. 
\end{equation*}
Since $r > \frac{2p}{2p-3}$, we see that $g \in L^1(0, T; L^\infty(\R^3))$. Since this holds for any locally Lipschitz $g \in L^r(0,T;L^p_{9/2})$, by Fournier's uniqueness result in Theorem \ref{thm:Fournier}, we conclude that any two such solutions $g$ with coinciding initial data are equal.

We conclude this section by noting that while the above computations are merely formal, they can be made rigorous. Although $g$ is locally Lipschitz, $g$ does not have sufficient (global) regularity to merit its use as a test function in \eqref{eq:landau}. To rigorously justify the formal argument presented here, one should truncate the norm, by using $\brak{v}^{9/2}\varphi^2 g^{p-1}$ as a test function, where $\varphi$ is a Lipschitz cutoff to $B_R$. To ensure that the formal arguments presented transfer to the local truncations (uniformly in $R$), we need a more precise version of the interpolation from Lemma \ref{lem:sobolev} used above:
\begin{lemma}\label{lem:eps_poincare}
    Suppose $g$, $\varphi$ are any Schwartz class functions. Then, for any $3/2 < q < \infty$, there is a universal constant $C = C(q)$ such that for any $\eps > 0$, there holds:
\begin{equation*}\begin{aligned}
    \int_{\R^3} &\brak{v}^{9/2}\varphi^2 g^{p+1} \dd v \\
    &\le \eps \int_{\R^3} \brak{v}^{3/2} \abs{\nabla \left(\varphi g^{p/2}\right)}^2 \dd v + C\left(\eps^{-3}\norm{g}_{L^q_{9/2}}^{2q}\right)^{\frac{1}{2q-3}}\int_{\R^3} \brak{v}^{9/2}\varphi^2 g^p \dd v.\end{aligned}
\end{equation*}
\end{lemma}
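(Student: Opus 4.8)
## Proof proposal for Lemma \ref{lem:eps_poincare}

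The plan is to prove the inequality by a localized interpolation argument: combine a Gagliardo--Nirenberg--Sobolev estimate for the compactly modulated function $\varphi g^{p/2}$ with a weighted H\"older interpolation, then absorb the gradient term using Young's inequality. The crucial point is that the cutoff $\varphi$ must be carried \emph{inside} the gradient on the right-hand side, so every integration by parts and every product rule must be organized so that only $\nabla(\varphi g^{p/2})$ appears, never $\varphi \nabla g^{p/2}$ separately.

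First I would reduce to an unweighted statement for the function $h := \varphi g^{p/2}$. Writing $\int \brak{v}^{9/2}\varphi^2 g^{p+1} = \int \brak{v}^{9/2} h^2 g = \int \brak{v}^{9/2} h^{2} g$ and splitting the weight as $\brak{v}^{9/2} = \brak{v}^{3/2}\cdot\brak{v}^{3}$, I would apply H\"older in the form
\begin{equation*}
    \int_{\R^3} \brak{v}^{9/2}\varphi^2 g^{p+1} \dd v \le \left(\int_{\R^3} \brak{v}^{\sigma} (\varphi g^{p/2})^{2\cdot 3} \dd v\right)^{1/3}\left(\int_{\R^3} \brak{v}^{\rho} \varphi^2 g^{p} \cdot g^{3q/(2q-3)\cdot(\dots)} \right)^{\dots},
\end{equation*}
choosing the exponents so that the first factor is (the cube root of) a weighted $L^6$ norm of $\varphi g^{p/2}$ and the second factor pairs an $L^q_{9/2}$-type norm of $g$ against $\int \brak{v}^{9/2}\varphi^2 g^p$. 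Concretely: interpolating $g^{p+1}$ between $g^p$ (weight $\brak{v}^{9/2}$, carried with the factor $\varphi^2$) and $(\varphi g^{p/2})^6$ (weighted $L^6$) and $g^q$ (weighted $L^q_{9/2}$), with the three H\"older exponents summing to $1$, gives a clean trilinear bound once one checks that the powers of $g$ and the weights balance; this is where $3/2 < q$ enters, guaranteeing the interpolation exponents lie in $(0,1)$ and that the power of the $L^q_{9/2}$ norm is positive.

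Next I would invoke the weighted Sobolev inequality from Lemma \ref{lem:sobolev}, applied to the Schwartz function $\varphi g^{p/2}$ with $k = 9/2$ (so $3k - 9 = 9/2$), which bounds the weighted $L^6$ norm $\left(\int \brak{v}^{9/2}(\varphi g^{p/2})^6\right)^{1/3}$ by $C\int \brak{v}^{3/2}|\nabla(\varphi g^{p/2})|^2$; note this is exactly the gradient term on the right-hand side of the claimed inequality, with the cutoff correctly placed inside. Substituting this into the H\"older bound yields a factor of the form $\big(\int \brak{v}^{3/2}|\nabla(\varphi g^{p/2})|^2\big)^{\theta}$ with $\theta = 1/(2q-3)\cdot(\text{something}) < 1$, multiplied by $\|g\|_{L^q_{9/2}}^{(\dots)}$ and by $\big(\int \brak{v}^{9/2}\varphi^2 g^p\big)^{1-\theta}$. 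A final application of Young's inequality $ab \le \eps a^{1/\theta'} + C(\eps) b^{1/\theta}$, splitting off the gradient term with weight $\eps$, produces the stated inequality; bookkeeping the exponent $\frac{1}{2q-3}$ and the power $\eps^{-3}$ on the $\|g\|_{L^q_{9/2}}^{2q}$ factor is then a matter of tracking the Young exponents, and matches the subcritical scaling used in \eqref{eq:diff-ineq-subcritical-g}.

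The main obstacle is purely algebraic: choosing the three H\"older exponents and the interpolation weights simultaneously so that (i) the total weight is $\brak{v}^{9/2}$ on each term, (ii) the Sobolev-admissible weighted $L^6$ norm of $\varphi g^{p/2}$ appears with the weight $\brak{v}^{9/2}$ demanded by Lemma \ref{lem:sobolev}, and (iii) the residual power of $g$ assembles into $\|g\|_{L^q_{9/2}}$ and not some other weighted norm. One must also be slightly careful that the cutoff survives the interpolation with the correct power $\varphi^2$ on the $\int \brak{v}^{9/2}\varphi^2 g^p$ factor — this forces the $L^6$ term to absorb exactly the ``extra'' copy of $\varphi$, which is consistent precisely because $6 = 2+2+2$ distributes one $\varphi$ to each of two $\varphi^2$-carrying factors plus... in fact the cleanest route is to write $\varphi^2 g^{p+1} = (\varphi g^{p/2})^{2/3\cdot 3}\cdot(\varphi^2 g^p)^{?}\cdot g^{?}$ directly and solve the resulting linear system for the exponents. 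Everything else — the Sobolev step, Young's inequality, the final rearrangement — is routine, and the rigorous truncation argument it supports (replacing $\brak{v}^{9/2}g^{p-1}$ by $\brak{v}^{9/2}\varphi_R^2 g^{p-1}$ in \eqref{eq:landau} and sending $R \to \infty$) then goes through uniformly in $R$ exactly as in the formal computation above.
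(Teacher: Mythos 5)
Your approach is correct and gives a genuinely different — and arguably more self-contained — route than the paper's. The paper proceeds \emph{bilinearly}: it writes $\int \varphi^2 g^{p+1}\dd v = \int (\varphi g^{p/2})^2 g\dd v$, applies H\"older against $\norm{g}_{L^q}$, and then uses the interpolated Sobolev inequality $\norm{\varphi g^{p/2}}_{L^{2q/(q-1)}} \lesssim \norm{\nabla(\varphi g^{p/2})}_{L^2}^{3/(2q)}\norm{\varphi g^{p/2}}_{L^2}^{1 - 3/(2q)}$. Because that interpolation is needed in a form compatible with Japanese-bracket weights, the paper runs the argument twice — once unweighted (Gagliardo--Nirenberg--Sobolev) and once with the homogeneous weight $|v|^{9/2}$ (Caffarelli--Kohn--Nirenberg) — and adds the two resulting inequalities. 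Your route, by contrast, distributes $\brak{v}^{9/2}$ \emph{trilinearly} across $(\varphi g^{p/2})^6$, $\varphi^2 g^p$, and $g^q$: solving the exponent constraints gives $a = \tfrac{1}{2q}$, $b = \tfrac{2q-3}{2q}$, $c = \tfrac{1}{q}$ (all positive for $q > 3/2$), so H\"older yields
\begin{equation*}
\int_{\R^3}\brak{v}^{9/2}\varphi^2 g^{p+1}\dd v \le \left(\int_{\R^3}\brak{v}^{9/2}(\varphi g^{p/2})^6\dd v\right)^{\frac{1}{2q}}\left(\int_{\R^3}\brak{v}^{9/2}\varphi^2 g^p\dd v\right)^{\frac{2q-3}{2q}}\norm{g}_{L^q_{9/2}},
\end{equation*}
and a single application of Lemma \ref{lem:sobolev} with $k = 9/2$ (an $L^6$-level, not an $L^{2q/(q-1)}$-level, estimate) controls the first factor by $\left(\int \brak{v}^{3/2}\abs{\nabla(\varphi g^{p/2})}^2\dd v\right)^{3/(2q)}$. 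Young's inequality with conjugate exponents $\tfrac{2q}{3}$ and $\tfrac{2q}{2q-3}$ then gives precisely $C\bigl(\eps^{-3}\norm{g}_{L^q_{9/2}}^{2q}\bigr)^{1/(2q-3)}$. What your route buys is the avoidance of CKN and the summation step entirely, relying only on the weighted Sobolev inequality already proved in the paper's appendix; what the paper's route buys is reliance on a named, well-referenced inequality. Two small remarks: the displayed line in your proposal with the placeholder exponent $g^{3q/(2q-3)\cdot(\dots)}$ should be replaced by the explicit exponent choices above (as it stands it does not obviously parse), and — a caveat that affects both proofs equally — $\varphi g^{p/2}$ is not literally Schwartz when $p/2\notin\N$, so the Sobolev step should be read through a standard density argument.
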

Such estimates were first applied to the study of the homogeneous Landau equation in the work of the second author and Guillen \cite{GualdaniGuillen2}, where they were termed $\eps$-Poincar\'{e} inequalities. More recently, a new $\eps$-Poincar\'{e} inequality appeared in the work of Alonso et al. \cite{AlonsoBaglandDesvillettesLods_ProdiSerrin}, enabling the authors to obtain a condition for propagation and creation of $L^p$ norms. The $\eps$-Poincar\'{e} inequality of \cite{AlonsoBaglandDesvillettesLods_ProdiSerrin} was the crucial tool in \cite{GoldingLoher} for turning a priori smoothing arguments into a rigorous uniqueness proof. We postpone a proof of Lemma \ref{lem:eps_poincare} to Appendix \ref{sec:eps_poincare} and refer the reader to \cite[Section 6]{GoldingLoher} for details on how to use Lemma \ref{lem:eps_poincare} to provide a rigorous justification of the formal argument above.

\section{The subcritical global in time theory}\label{sec:long_time2}

\subsection{Pointwise lower bounds}

Here we show the solution constructed in Theorem \ref{thm:short} propagates lower bounds, which will be used to control the Fisher information. When $p > 3/2$, $\norm{A[f]}_{L^\infty} \lesssim 1$ so that we can show that the function $\psi(t,v) = a\exp(-\eta t) \brak{v}^{-k}$ satisfies
\begin{equation*}
    \partial_t \psi \le A[f]:\nabla^2 \psi + f\psi,
\end{equation*}
provided $\eta$ is chosen suitably large. We still take Stampacchia approach to show $f_{in} \ge \brak{v}^{-k}$ implies $f \ge \psi$, but take less care in justifying computations, particularly when using integration by parts. Because we have uniqueness in the $p > 3/2$ case, a priori estimates can easily be extended to rough data. We begin by proving that $\psi$ as above is indeed a subsolution to the linearized equation:
\begin{lemma}\label{lem:psi}
    Suppose $f:[0,T]\times \R^3 \to \R^+$ is a smooth solution to \eqref{eq:landau} with
    \begin{equation*}
       C_1\brak{v}^{-3} \le A[f] \le C_2.  
    \end{equation*}
    Then, for each $k > 5$ and $a > 0$, there is $\eta = \eta(C_1,C_2,k)$ sufficiently large so that $\psi(t,v) = a\exp(-\eta t) \brak{v}^{-k}$ satisfies
    \begin{equation*}
        \partial_t \psi \le A[f]:\nabla^2 \psi + f\psi.
    \end{equation*}
\end{lemma}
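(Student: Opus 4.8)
The plan is a direct pointwise verification; no PDE machinery is needed beyond the stated two–sided bound on $A[f]$. Since $\psi = a e^{-\eta t}\brak{v}^{-k} > 0$ and $f \ge 0$, the term $f\psi$ is nonnegative, so it suffices to produce $\eta$ with $A[f]:\nabla^2\psi + \eta\psi \ge 0$ pointwise; then $\partial_t \psi = -\eta\psi \le A[f]:\nabla^2\psi \le A[f]:\nabla^2\psi + f\psi$. First I would record the Hessian of the weight: from $\nabla \brak{v}^{-k} = -k\brak{v}^{-k-2}v$ one computes
\[
\nabla^2 \brak{v}^{-k} = -k\brak{v}^{-k-2} I + k(k+2)\brak{v}^{-k-4}\, v\otimes v .
\]
Contracting against $A[f]$, adding $\eta\psi$, and factoring out $a e^{-\eta t}\brak{v}^{-k-2}$ gives
\[
A[f]:\nabla^2\psi + \eta\psi = a e^{-\eta t}\brak{v}^{-k-2}\Big(\eta\brak{v}^{2} - k\,\mathrm{tr}\big(A[f]\big) + k(k+2)\brak{v}^{-2}\big(A[f]v\cdot v\big)\Big).
\]

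The second step is to exploit the signs inside the parentheses. Because $A[f] \ge C_1\brak{v}^{-3}I \ge 0$ as symmetric matrices, $A[f]v\cdot v \ge 0$, so the last term is harmless and may be dropped. Because $A[f] \le C_2 I$, every eigenvalue of $A[f]$ is at most $C_2$, hence $\mathrm{tr}(A[f]) \le 3C_2$. Using also $\brak{v}^2 \ge 1$, the parenthesis is bounded below by $\eta - 3kC_2$, so choosing $\eta := 3kC_2$ — a quantity depending only on $C_2$ and $k$, hence admissibly on $(C_1,C_2,k)$ — yields $A[f]:\nabla^2\psi + \eta\psi \ge 0$, and adding back $f\psi \ge 0$ completes the argument.

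There is no real obstacle here — the statement is an elementary calculus identity once the coefficient bounds are inserted. The only point requiring attention is the bookkeeping of signs: the genuinely unfavorable contribution $-k\,\mathrm{tr}(A[f])\brak{v}^{-k-2}$ must be absorbed by the zero–order gain $\eta\psi$, while the $v\otimes v$ term automatically has the good sign thanks to $A[f]\ge 0$; and one must remember to discard $f\psi$ rather than attempt to use it. We note that neither the ellipticity constant $C_1$ nor the restriction $k>5$ enters this computation; they are needed only in the subsequent Stampacchia argument (compare the negative–weight level–set estimate of Lemma \ref{lem:minimum_principle_computation}) that upgrades the subsolution property of $\psi$ into the actual pointwise lower bound $f \ge \psi$.
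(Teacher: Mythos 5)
Your proof is correct and in fact cleaner than the paper's. Both start from the same Hessian computation, but the paper retains the $v\tens v$ contribution, bounding it from below via $A[f]\ge C_1\brak{v}^{-3}$ (so $A[f]v\cdot v \ge C_1\abs{v}^2\brak{v}^{-3}$), and then invokes Young's inequality to absorb the unfavorable $\brak{v}^{-2}$ term coming from $\mathrm{tr}(A[f])$ into this supposed gain at scale $\brak{v}^{-5}$. That step is shaky near the origin: $\abs{v}^2\brak{v}^{-7}$ vanishes at $v=0$, so it does \emph{not} dominate a multiple of $\brak{v}^{-5}$, and the middle line of the paper's display chain is not a genuine pointwise inequality. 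Your approach sidesteps the issue entirely by simply discarding the $v\tens v$ term (which is legitimate since $A[f]\ge 0$), bounding $\mathrm{tr}(A[f])\le 3C_2$ and using $\brak{v}^2\ge 1$, which gives the clean choice $\eta=3kC_2$. Your closing observation that neither the ellipticity constant $C_1$ nor the hypothesis $k>5$ actually enter this argument is accurate: they are carried in the statement because the surrounding Stampacchia argument (Lemma \ref{lem:minimum_principle_computation} and its use) needs them, but this lemma itself does not.
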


\begin{proof}
    Define $\psi(t,v) = a\exp(-\eta t) \brak{v}^{-k}$. Then, we compute derivatives of $\psi$.
    \begin{align*}
        &\partial_t \psi = -\eta \psi, \qquad \nabla \psi = a\exp(-\eta t) \left(-k \brak{v}^{-k-2}v\right), \\
        &\nabla^2 \psi = a\exp(-\eta t) \left[k(k+2) \brak{v}^{-k-4}v\tens v - k\brak{v}^{-k-2}Id\right] \\
        &\qquad= \left[k(k+2)\frac{v\tens v}{\brak{v}^{4}} - k\frac{Id}{\brak{v}^2} \right]\psi.
    \end{align*}
    So, using $f \geq 0$, $\mathrm{tr}\left(A[f]\right) \le C_1$, $A[f] \ge C_2\brak{v}^{-3}$, by Young's inequality
    \begin{equation*}
    \begin{aligned}
        \partial_t \psi - A[f]:\nabla^2 \psi - f\psi & \le \left[\frac{C_1k}{\brak{v}^2} -\eta - \frac{C_2k(k+2)|v|^2}{\brak{v}^{7}}\right]\psi\\ 
            &\le \left[\frac{\tilde C_1}{\brak{v}^2} - \eta - \frac{\tilde C_2}{\brak{v}^5}\right]\psi\\
            &\le \left[\tilde C_1\left(\frac{5}{2\delta}\right)^{10/3} + \tilde C_1\delta \brak{v}^{-5} - \eta - \tilde C_2 \brak{v}^{-5}\right]\psi.
    \end{aligned}
    \end{equation*}
    Therefore, picking $\delta = \frac{\tilde C_2}{\tilde C_1}$ and then picking $\eta > \tilde C_1(\frac{5}{2\delta})^{10/3}$, concludes the proof.
\end{proof}
\begin{lemma}\label{lem:lowerbound-f-subcritical}
    Suppose $f:[0,T]\times \R^3 \to \R^+$ is a smooth solution to \eqref{eq:landau} with initial data $f_{in} \in L^p_{9/2}$ for $p > 3/2$ and $f_{in} \ge a\brak{v}^{-k}$ for $k > 5$ and $a > 0$. Suppose further that $f\in L^\infty(0,T;L^p) \cap L^1(0,T;L^\infty)$. Then, there is an $\eta = \eta(p,\norm{f_{in}}_{L^p}) > 0$ sufficiently large such that
    \begin{equation*}
        f(t) \ge a\exp(-\eta t) \brak{v}^{-k} \qquad \text{for }0 \le t \le T.
    \end{equation*}
\end{lemma}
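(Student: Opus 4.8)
The plan is to run the Stampacchia argument of Section~\ref{sec:long_time1}, which simplifies considerably here because the diffusion coefficient is bounded \emph{uniformly in time}, so the relevant barrier is $\psi(t,v) = a\exp(-\eta t)\brak{v}^{-k}$ rather than the $\exp(-\eta t^{2/3})$-barrier of Lemma~\ref{lem:minimum_principle}. First I would record the two-sided bound on $A[f]$: the standing normalization \eqref{eq:normalisation} together with $f \in L^\infty(0,T;L^p)$, $p > 3/2$, yields via Lemma~\ref{lem:entropy-bound} a uniform entropy bound, hence via Lemma~\ref{lem:ellipticity} the lower bound $A[f(t)] \ge C_1\brak{v}^{-3}I$; and via Lemma~\ref{lem:coefficient_bounds} (with $q = p$) the upper bound $\norm{A[f(t)]}_{L^\infty} \le C_2$, where $C_1, C_2$ depend only on $p$, $\norm{f}_{L^\infty(0,T;L^p)}$ and the normalization constants. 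In particular Lemma~\ref{lem:psi} applies and gives, for $\eta = \eta(C_1,C_2,k)$ large enough, $\partial_t\psi \le A[f]:\nabla^2\psi + f\psi$ on $[0,T]\times\R^3$. Using the identities $\nabla\cdot A[f] = \nabla a[f]$ and $\Delta a[f] = -f$, equation \eqref{eq:landau} reads $\partial_t f = A[f]:\nabla^2 f + f^2$; subtracting, $w := \psi - f$ is a subsolution, $\partial_t w \le A[f]:\nabla^2 w + fw$, with $w(0,\cdot) = a\brak{v}^{-k} - f_{in} \le 0$.

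It then remains to show $w \le 0$, equivalently $w_+ \equiv 0$. Since $\psi$, hence $w_+$, decays only polynomially, one cannot use an unweighted energy; instead I work with $g := \brak{v}^k f$, $\ell(t) := a\exp(-\eta t)$, and $g_\ell := (\ell - g)_+$, so that $w_+ = \brak{v}^{-k}g_\ell$, and repeat the weighted computation of Lemma~\ref{lem:minimum_principle_computation}: testing the equation satisfied by $g$ against $-\brak{v}^n g_\ell^{1/2}$ for $n < -3$ sufficiently negative (so $\brak{v}^n$ is integrable and all the integrals converge, $g_\ell$ being supported in $\set{g < \ell}$, where it is bounded), integrating by parts, and absorbing the cross terms with Young's inequality and the bounds $C_1\brak{v}^{-3} \le A[f] \le C_2$ (the $\nabla a[f]$ contributions being converted back into $A[f]$-terms via $\nabla\cdot A[f] = \nabla a[f]$, while the most singular term $\int\brak{v}^n g_\ell^{3/2}f$ has a favourable sign), yields
\begin{equation*}
    \frac{\dd}{\dd t}\int_{\R^3}\brak{v}^n g_\ell^{3/2}\dd v \le \big(\ell'(t) + C\norm{A[f(t)]}_{L^\infty}\ell(t)\big)\int_{\R^3}\brak{v}^{n-2}g_\ell^{1/2}\dd v,
\end{equation*}
with $C = C(n,k)$. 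As $\ell'(t) = -\eta\ell(t)$ and $\norm{A[f(t)]}_{L^\infty} \le C_2$, enlarging $\eta$ so that $\eta \ge C C_2$ makes the right-hand side nonpositive, hence $t\mapsto\int_{\R^3}\brak{v}^n g_\ell^{3/2}\dd v$ is nonincreasing; it vanishes at $t = 0$, because $g(0) = \brak{v}^k f_{in} \ge a = \ell(0)$ forces $g_\ell(0)\equiv 0$. Therefore it vanishes on all of $[0,T]$, so $g_\ell \equiv 0$, i.e.\ $\brak{v}^k f(t,v) \ge a\exp(-\eta t)$, which is the claim.

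The main obstacle is justifying the weighted energy computation rigorously for a solution that is merely in $L^\infty(0,T;L^p)\cap L^1(0,T;L^\infty)$: the integrations by parts hidden in Lemma~\ref{lem:minimum_principle_computation} require decay of $g$, $\nabla g_\ell$, or $\brak{v}^n$ to discard boundary terms, which is genuinely available only for rapidly decaying $f$. I would handle this as announced at the start of Section~\ref{sec:short_time2}: prove the estimate first for the Schwartz-class approximating solutions $f_\eps$ used in the construction of $f$ (where $g_\eps = \brak{v}^k f_\eps$ is rapidly decaying and Lemma~\ref{lem:minimum_principle_computation} applies verbatim), obtaining $f_\eps(t) \ge a\exp(-\eta t)\brak{v}^{-k}$ with $\eta$ independent of $\eps$ (since $C_1, C_2, k$ are), and then pass to the limit $f_\eps \to f$, which is legitimate: $L^1$ moments propagate by Lemma~\ref{lem:moments} and $f \in L^1(0,T;L^\infty)$ by hypothesis, so Theorem~\ref{thm:Fournier} identifies the limit as $f$, and the pointwise lower bound survives passage to a.e.\ limits.
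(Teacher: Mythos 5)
Your proof is correct but takes a genuinely different route from the paper's. After invoking Lemma~\ref{lem:psi} to produce the subsolution $\psi = a\exp(-\eta t)\brak{v}^{-k}$, the paper closes the argument with a \emph{direct, unweighted} $L^p$-estimate on $\varphi_+ := (\psi - f)_+$: testing the subsolution inequality against $p\varphi_+^{p-1}$, integrating by parts, discarding the coercive term, and using $\Delta a[f] = -f$ together with the pointwise observation that $f \le \psi$ on $\set{\varphi > 0}$ yields $\frac{\dd}{\dd t}\norm{\varphi_+}_{L^p}^p \lesssim \norm{\psi(t)}_{L^\infty}\norm{\varphi_+}_{L^p}^p$, and Gr\"onwall with $\psi\in L^1(0,T;L^\infty)$ and $\varphi_+(0)=0$ finishes. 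You assert that ``one cannot use an unweighted energy'' because $\psi$, hence $w_+$, decays only polynomially; this is not correct here, and is exactly the obstruction the paper's choice of exponent avoids: since $w_+ \le \psi \lesssim \brak{v}^{-k}$ with $k > 5$ and $p > 3/2$, one has $kp > 15/2 > 3$, so $w_+^p$ \emph{is} integrable on $\R^3$, and the unweighted energy is perfectly viable. Your alternative — reusing the weighted level-set machinery of Lemma~\ref{lem:minimum_principle_computation} with the simpler barrier $\ell(t) = a e^{-\eta t}$ (since $\norm{A[f]}_{L^\infty}\lesssim 1$ for $p > 3/2$, unlike the $t^{-1/3}$ blow-up in the critical case) — is nevertheless a valid and more robust route, at the cost of reintroducing the approximation-by-Schwartz-class step needed to justify the integrations by parts. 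Note one small wrinkle in that approximation step: your statement that $f_{in,\eps}$ can be chosen with $f_{in,\eps}\ge a\brak{v}^{-k}$ so as to get the pointwise bound for each $f_\eps$ is not automatic; the cleaner route (as in the $p = 3/2$ argument in Section~\ref{sec:long_time1}) is to conclude only that $\int\brak{v}^n(\ell - \brak{v}^k f_\eps)_+^{3/2}\dd v \le \int\brak{v}^n(a - \brak{v}^k f_{in,\eps})_+^{3/2}\dd v$, observe the right-hand side tends to $0$ as $\eps \to 0^+$ since $f_{in} \ge a\brak{v}^{-k}$ and $f_{in,\eps}\to f_{in}$ in $L^1$, and pass to the limit. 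The identification of the limit with $f$ via Fournier's uniqueness, as you propose, is the correct closing step.
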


\begin{proof}
    Let $\psi$ be as in Lemma \ref{lem:psi}, so that $f_{in}(v) \ge \psi(0,v)$ and
    \begin{equation*}
        \partial_t \psi \le A[f]:\nabla^2\psi + \psi f.
    \end{equation*}
    Set $\varphi = \psi - f$ so that by linearity,
    \begin{equation*}
        \partial_t \varphi \le A[f]:\nabla^2 \varphi + \varphi f.
    \end{equation*}
    Then, performing an $L^p$ estimate, we find
    \begin{equation*}
    \begin{aligned}
        \frac{\dd }{\dd t}\int_{\R^3} \varphi_+^{p} &= -\int_{\R^3} \nabla \varphi_+^{p-1} \cdot \left(A[f]\nabla \varphi - \nabla a[f]\varphi\right) \dd v\\
            &= -\frac{4(p-1)}{p^2}\int_{\R^3} \nabla \varphi_+^{p/2} \cdot A[f]\nabla \varphi^{p/2}_+ \dd v + \frac{p-1}{p}\int_{\R^3} \nabla\varphi_+^p \cdot \nabla a[f] \dd v\\
            &\le \int_{\R^3} \varphi_+^p f \dd v\le \int_{\R^3} \varphi_+^p \psi \dd v \le \norm{\psi(t)}_{L^\infty}\int_{\R^3} \varphi_+^p\dd v.
    \end{aligned}
    \end{equation*}
    Since $\psi \in L^1(0,T;L^\infty)$ and $\varphi_+(0) = 0$, by Gr\"onwall's inequality, $\varphi_+ = 0$ on $[0,T]\times \R^3$, which completes the proof.
\end{proof}

\subsection{Bound on the Fisher information and proof of Theorem \ref{thm:long}}
Fix initial datum $f_{in} \in L^1_m \cap L^{p}_{m_0}$ for some $p\in (3/2, +\infty)$, with $m$ and $m_0$ as stated in the theorem. 
From Theorem \ref{thm:short}, we have a solution $f:[0,T]\times \R^3 \to \R^+$ satisfying the bounds
\begin{equation}\label{eq:H1_upper_bound2}
\begin{aligned}
    \sup_{0 < t < T} \norm{f(t)}_{L^{p}_{m_0}}^{p} + \int_0^{T}\int_{\R^3} \brak{v}^{m_0 - 3}\abs{\nabla f^{p/2}}^2 \dd v \dd t + \sup_{0 < t < T} &t^{3/2p}\norm{f(t)}_{L^\infty}\\
    & \le C(f_{in}, m_0, p, T).
    \end{aligned}
\end{equation}
Since $\norm{f(t)}_{L^p} \lesssim 1$, by Lemma \ref{lem:coefficient_bounds}, $\norm{A[f(t)]}_{L^\infty} \lesssim 1$. So, by Lemma \ref{lem:lowerbound-f-subcritical}, for some $\eta = \eta(k,p,\norm{f_{in}}_{L^p},T) > 0$ there holds:
\begin{equation*}
    f(t,v) \ge a\exp(-\eta t) \brak{v}^{-k} \qquad \text{for each }(t,v) \in [0,T]\times\R^3.
\end{equation*}
Combined with \eqref{eq:H1_upper_bound2}, we have shown the following upper bound on the Fisher information: 
\begin{equation*}\begin{aligned}
    \int_0^T\int_{\R^3}f^{p-2} \frac{\abs{\nabla f}^2}{f^{p-1}} \dd v\dd t &\le C(a,T,k)\int_0^T\int_{\R^3} \brak{v}^{k(p-1)}\abs{\nabla f^{p/2}}^2 \dd v\dd t \\
    &\le C(a,T,k,f_{in},m_0,p)
    \end{aligned}
\end{equation*}
provided $k(p-1) \le m_0 - 3$. We conclude as in the $p = 3/2$ case of Theorem \ref{thm:long-cor-GS-GL} that $f$ can be continued to a global-in-time smooth solution with decreasing Fisher information.

\appendix 
\section{Proof of Lemma \ref{lem:sobolev}}
\label{appendix}
In the section, we provide proofs for some of the basic technical tools used throughout. 

\begin{flushleft}
    \underline{Proof of the weighted Sobolev inequality \eqref{eq:sobolev}}
\end{flushleft}

\begin{proof}
The case $k = 3$ follows from the Sobolev embedding $\dot{H}^1(\R^3) \embeds L^6(\R^3)$ and Hardy's inequality.

For $k > 3$, we first compute derivatives of our weight $\brak{v}^m$ for $m\in \R$ arbitrary.
\begin{equation*}
    \nabla \brak{v}^m = m \brak{v}^{m-2}v \quad \text{and}\quad \Delta \brak{v}^m = 3m \brak{v}^{m-2} + m(m-2)\brak{v}^{m-4}|v|^2
\end{equation*}
Now, we apply the Sobolev embedding to the function $\varphi = \brak{v}^m f$ and find for $C_1 > 0$ the Sobolev constant,
\begin{equation*}
\begin{aligned}
    \Bigg(\int_{\R^3}& \brak{v}^{6m} f^6\Bigg)^{1/3} \\
    &\le C_1\int_{\R^3} \abs{\nabla \brak{v}^mf}^2\\
        &= C_1 \int_{\R^3} (f\nabla \brak{v}^m + \brak{v}^m \nabla f)^2\\
        &= C_1 \int_{\R^3} \brak{v}^{2m}\abs{\nabla f}^2 \dd v +  \int_{\R^3} m^2 f^2 \brak{v}^{2(m-2)}|v|^2 + 2m f\brak{v}^{2m-2}\nabla f \cdot v\\
        &= C_1 \int_{\R^3} \brak{v}^{2m}\abs{\nabla f}^2 \dd v + C_1\int_{\R^3} m^2 f^2 \brak{v}^{2(m-2)}|v|^2 + \frac{1}{2}\nabla f^2 \cdot \nabla \brak{v}^{2m}\\
        &= C_1 \int_{\R^3} \brak{v}^{2m}\abs{\nabla f}^2 \dd v + C_1\int_{\R^3} m^2 f^2 \brak{v}^{2(m-2)}|v|^2 - \frac{1}{2} f^2 \Delta \brak{v}^{2m}\\
        &\le C_1\int_{\R^3} \brak{v}^{2m} \abs{\nabla f}^2 + C_2 \int_{\R^3} f^2 \brak{v}^{2m-2}.
\end{aligned}
\end{equation*}
For $m > 0$, we use $|v|^2 \le \brak{v}^2$ to compute $C_2$ as
\begin{equation*}
    C_2 = C_1(m^2 - \frac{1}{2}\left(6m + 2m(2m-2)\right)) = C_1( m^2 - 3m + 2m^2 + 2m) = -C_1(m^2 + m) < 0.
\end{equation*}
Finally, setting $2m = k-3$, for $k > 3$, we conclude
\begin{equation*}
    \left(\int_{\R^3} \brak{v}^{3k - 9} f^6\right)^{1/3} \le C_1\int_{\R^3} \brak{v}^{k-3} \abs{\nabla f}^2 - \frac{C_1}{4}(k-3)(k-1) \int_{\R^3} f^2 \brak{v}^{k-5}.
\end{equation*}
\end{proof}

\begin{flushleft}
    \underline{Proof of the interpolation estimate \eqref{eq:interpolation-1}}
\end{flushleft}

\begin{proof}
    We use interpolation of weighted Lebesgue spaces (i.e. H\"older's inequality) to bound $\norm{f}_{L^{q}_k}$ as
    \begin{equation*}
        \norm{\brak{v}^{\frac{k}{q}}f}_{L^{q}} \le \norm{\brak{v}^{\frac{m}{p}}f}_{L^p}^{\theta}\norm{\brak{v}^{\frac{k-3}{p}}f}_{L^{3p}}^{1-\theta},
    \end{equation*}
    provided $\theta \in (0,1)$ and $m \in \R$ satisfy the relations
    \begin{equation*}
        \frac{1}{q} = \frac{\theta}{p} + \frac{1-\theta}{3p} \qquad \text{and} \qquad \frac{k}{q} = \frac{m\theta}{p} + \frac{(k-3)(1-\theta)}{p}.
    \end{equation*}
    Solving this system of constraints for $\theta$ and $m$, we obtain
    \begin{equation*}
        \theta = \frac{3p-q}{2q}, \qquad 1-\theta = \frac{3q - 3p}{2q},\qquad \text{and} \qquad m = \frac{2kp - (k-3)(3q-3p)}{3p-q}.
    \end{equation*}
    Therefore, since $k \ge 3$, we use the weighted Sobolev inequality \eqref{eq:sobolev} to obtain
    \begin{equation*}\begin{aligned}
        \norm{f}_{L^{q}_k}^{q} &\le \norm{f}_{L^p_m}^{\frac{3p-q}{2}}\norm{\brak{v}^{\frac{k-3}{p}}f}_{L^{3p}}^{\frac{3}{2}(q-p)} \\
        &\le \norm{f}_{L^p_m}^{\frac{3p-q}{2}}\norm{f^{p/2}}_{L^6_{3k-9}}^{\frac{3}{p}(q-p)} \le C_{k,p}\norm{f}_{L^p_m}^{\frac{3p-q}{2}}\norm{\nabla f^{p/2}}_{L^2_{k-3}}^{\frac{3}{p}(q-p)},
  \end{aligned}
    \end{equation*}
    which completes the proof. 
\end{proof}

\section{Proof of Lemma \ref{lem:cutoff}}\label{appendix2}
In this section, we construct the smooth cutoff used in the proof of Theorem \ref{thm:long-cor-GS-GL}.
\begin{proof}
    We define first an auxiliary function $\tilde \varphi$ as
    \begin{equation*}
        \tilde \varphi(x) = \begin{cases} \exp(-x^{-1}) &\text{if } x > 0\\ 0 &\text{otherwise.} \end{cases}
    \end{equation*}
    From $\tilde \varphi$, we translate and rescale $x \mapsto \frac{\tilde \varphi(x)}{\tilde \varphi(x) + \tilde \varphi(1 - x)}$ and define the radial profile of the cutoff:
    \begin{equation*}
        \begin{aligned}
           \varphi(x) &= \left(\frac{\tilde \varphi\left(2- x/R\right)}{\tilde \varphi\left(2-x/R\right) + \tilde \varphi\left(1 - (2-x/R)\right)}\right)^2 \\
           &= \begin{cases}1, \qquad &\text{if } x \leq R, \\
               \left( \frac{\exp\left(-(2-x/R)^{-1}\right)}{\exp\left(-(2-x/R)^{-1}\right) + \exp\left(-(x/R-1)^{-1}\right)}\right)^2, \qquad &\text{if } R < x < 2R,\\
                0 \qquad\qquad &\text{if }  2R \leq x.
             \end{cases}
        \end{aligned}
    \end{equation*}
    The cutoff we seek is $\eta(v) = \varphi(|v|)$. Note that because $\tilde \varphi \in C^\infty$, $\eta \in C^\infty$ and we can easily verify $0 \le \eta \le 1$, $\eta = 1$ on $B_R$, and $\eta = 0$ on $\R^3 \setminus B_R$.
    It remains only to verify the differential inequalities, which are nontrivial only when $R < |v| < 2R$. 
    
    Because $\eta$ is radial, $\nabla \eta(v) = \frac{v}{\abs{v}}\varphi^\prime(\abs{v})$, so that $\abs{\nabla \eta(v)} = \abs{\varphi'(\abs{v})}$.
    For $R < \abs{v} < 2R$, by explicit computation,
    \begin{equation*}
    \begin{aligned}
        \varphi^\prime(x) &= 2\sqrt{\varphi(x)}\left(\frac{\tilde\varphi(2-x/R)}{\tilde \varphi(2 - x/R) + \tilde \varphi(1 - (2-x/R))}\right)^\prime\\
            &= -\frac{2\sqrt{\varphi(x)}}{R}\left(\frac{\tilde\varphi^\prime(2-x/R)\tilde \varphi(1-(2- x/R)) + \tilde\varphi(2-x/R) \tilde \varphi^\prime(1-(2- x/R))}{\left[\tilde \varphi(2 - x/R) + \tilde \varphi(1-(2- x/R))\right]^2}\right).
    \end{aligned}
    \end{equation*}
    We bound the term in parentheses by computing it explicitly and realizing that for $R < x < 2R$ the denominator is minimized at $x = 3R/2$, and that the numerator is bounded since $x \mapsto e^{-x^{-1}} x^{-2}$ is bounded, as the exponential dominates:
    \begin{equation*}
        0 \le \frac{ e^{-(2 - x/R)^{-1}}e^{-(1- (2 - x/R))^{-1}}\left[(2 - x/R)^{-2} + (1 - (2 - x/R))^{-2} \right]}{\left[e^{-(2 - x/R)^{-1}} + e^{-(1- (2 - x/R))^{-1}}\right]^2} \le C.
    \end{equation*}
    Therefore, $\varphi$ is decreasing and
    \begin{equation}\label{eq:eta'}
        \begin{aligned}
            \abs{\nabla \eta(v)} = \abs{\varphi^\prime(\abs{v})} = -\varphi^\prime(\abs{v}) \le \frac{2C\sqrt{\varphi(\abs{v})}}{R} = \frac{2C\sqrt{\eta}}{R}.
                 \end{aligned}
    \end{equation}
    From \eqref{eq:eta'}, we deduce one differential equality for $\eta$. On the other hand, we see for $R < x < 2R$,
    \begin{equation*}
        \begin{aligned}
            1 - \varphi(x) &= \frac{\tilde \varphi\left(1 - (2 - x/R)\right) \left(\varphi(1 - (2-x/R)) + 2 \tilde \varphi(2-x/R)\right)}{\left[\tilde \varphi\left(2-x/R\right) + \tilde \varphi\left(1 - (2-x/R)\right)\right]^2}\\
                &= \frac{e^{-(1 -(2-x/R))^{-1}}\left[e^{-(1 -(2-x/R))^{-1}}+2e^{-(2-x/R)^{-1}}\right] }{ \left(e^{-(2-x/R)^{-1}} + e^{-(1- (2 - x/R))^{-1}}\right)^2},
        \end{aligned}
    \end{equation*}
    so that the explicit expressions for $\varphi$ yield
    \begin{equation*}
    \begin{aligned}
        \frac{\abs{\varphi'(x)}^2}{1 - \varphi(x)} &= \frac{ 4R^{-2}e^{-(2 - x/R)^{-1}}}{\left[e^{-(1 -(2-x/R))^{-1}}+2e^{-(2-x/R)^{-1}}\right]}\\
            &\qquad\times \left[{\frac{ e^{-(2 - x/R)^{-1}}e^{-1/2(1 - (2- x/R))^{-1}}\left[(2-x/R)^{-2} + (1- (2-x/R))^{-2} \right]}{\left[e^{-(2 - x/R)^{-1}} + e^{-(1- (2- x/R))^{-1}}\right]^2}}\right]^2\\
            & \leq \frac{C}{R^2},
    \end{aligned}
    \end{equation*}
    where we bounded the term in brackets as before, minimizing the denominator and again using that the exponential dominates any polynomial. We conclude
    \begin{equation}\label{eq:eta'2}
        \abs{\nabla \eta(v)} \le \frac{C\sqrt{1 - \eta(v)}}{R},
    \end{equation}
    and \eqref{eq:eta'2} is the second differential inequality claimed.
\end{proof}

\section{Proof of Lemma \ref{lem:eps_poincare}}\label{sec:eps_poincare}

In this section, we provide a proof of the $\eps$-Poincar\'{e} inequality in Lemma \ref{lem:eps_poincare}:
\begin{proof}
    We prove first an unweighted version of the desired inequality. Using Holder's inequality and the Gagliardo-Nirenberg-Sobolev inequality, we obtain:
    \begin{align*}
        \int_{\R^3} \varphi^2 g^{p+1} \dd v &\le \left(\int_{\R^3} g^q \dd v\right)^{\frac{1}{q}}\left(\int_{\R^3} \left(\varphi^2 g^{p}\right)^{\frac{q}{q-1}}\right)^{\frac{q-1}{q}} = \norm{g}_{L^q} \norm{\varphi g^{p/2}}_{L^{\frac{2q}{q-1}}}^2\\
            &\le C\norm{g}_{L^q}\norm{\nabla(\varphi g^{p/2})}_{L^2}^{2\theta}\norm{\varphi g^{p/2}}_{L^2}^{2(1-\theta)}\\
            &\le \eps\left(\int_{\R^3} \abs{\nabla(\varphi g^{p/2})}^2 \dd v\right) + C\eps^{-\frac{\theta}{1-\theta}}\norm{g}_{L^q}^{\frac{1}{1 - \theta}}\left(\int_{\R^3}\varphi^2 g^p \dd v\right),
    \end{align*}
    where $\frac{q-1}{2q} = \frac{1}{2} - \frac{\theta}{3}$. Solving for $\theta$, we find $\theta = \frac{3}{2q} \in (0,1)$ for $q > 3/2$. Inserting this value of $\theta$, we find
    \begin{equation}\label{eq:eps_poincare_unweighted}
        \int_{\R^3} \varphi^2 g^{p+1} \dd v \le \eps\left(\int_{\R^3} \abs{\nabla(\varphi g^{p/2})}^2 \dd v\right) + C\eps^{-\frac{3}{2q-3}}\norm{g}_{L^q}^{\frac{2q}{2q-3}}\left(\int_{\R^3}\varphi^2 g^p \dd v\right).
    \end{equation}
    Repeating the above argument with homogeneous weights of the form $\abs{v}^m$ and using the Caffarelli-Kohn-Nirenberg inequality (see \cite{CaffarelliKohnNirenberg} or later improvements in \cite{DuarteSilva,Lin}) in place of the Gagliardo-Nirenberg-Sobolev inequality, we find:
    \begin{align*}
        \int_{\R^3}& \abs{v}^{9/2} \varphi^2 g^{p+1} \dd v \\
        &\le \left(\int_{\R^3} \abs{v}^{9/2}g^q \dd v\right)^{\frac{1}{q}}\left(\int_{\R^3} \abs{v}^{9/2}\left(\varphi^2 g^{p}\right)^{\frac{q}{q-1}}\right)^{\frac{q-1}{q}}\\
            &= \norm{\abs{v}^{\frac{9}{2q}}g}_{L^q} \norm{\abs{v}^{\frac{9q}{4(q-1)}}\varphi g^{p/2}}_{L^{\frac{2q}{q-1}}}^2\\
            &\le C\norm{\abs{v}^{\frac{9}{2q}}g}_{L^q}\norm{\abs{v}^{3/4}\nabla(\varphi g^{p/2})}_{L^2}^{2\theta}\norm{\abs{v}^{9/4}\varphi g^{p/2}}_{L^2}^{2(1-\theta)}\\
            &\le \eps\left(\int_{\R^3} \abs{v}^{3/2}\abs{\nabla(\varphi g^{p/2})}^2 \dd v\right) + C\eps^{-\frac{\theta}{1-\theta}}\norm{\abs{v}^{\frac{9}{2q}}g}_{L^q}^{\frac{1}{1 - \theta}}\left(\int_{\R^3}\abs{v}^{9/2}\varphi^2 g^p \dd v\right),
    \end{align*}
    where $\theta$ is now given by
    \begin{equation*}
        \frac{q-1}{2q} + \frac{\frac{9(q-1)}{4q}}{3} = \theta \left(\frac{1}{2} - \frac{1 - 3/4}{3} \right) + (1-\theta)\left(\frac{1}{2} + \frac{9/4}{3}\right).
    \end{equation*}
    Solving for $\theta$, we once again find $\theta = \frac{3}{2q} \in (0,1)$ for $q > 3/2$.
    Consequently, we obtain
    \begin{equation}\label{eq:eps_poincare_homogeneous}
    \begin{aligned}
        \int_{\R^3} \abs{v}^{9/2}\varphi^2 g^{p+1} \dd v \le &\eps\left(\int_{\R^3} \abs{v}^{3/2}\abs{\nabla(\varphi g^{p/2})}^2 \dd v\right) \\
        &+ C\eps^{-\frac{3}{2q-3}}\norm{\abs{v}^{\frac{9}{2q}}g}_{L^q}^{\frac{2q}{2q-3}}\left(\int_{\R^3} \abs{v}^{9/2}\varphi^2 g^p \dd v\right).
        \end{aligned}
    \end{equation}
    Summing \eqref{eq:eps_poincare_unweighted} and \eqref{eq:eps_poincare_homogeneous}, we obtain the desired bound.
    \end{proof}

\bibliographystyle{plain}
\bibliography{Landau-bib}

\end{document}